\begin{document}
\title{Nonlinear Centralizers in Homology\\ II. The Schatten classes}
\author{F\'{e}lix Cabello S\'{a}nchez}
\thanks{Research supported in part DGICYT projects MTM2004-02635, MTM2007-6994-C02-02, MTM2010-20190-C02-01, MTM2013-45643-C2-1-P and MTM2016-76958-C2-1-P}
\thanks{2010 {\it Mathematics Subject Classification}: 46M18, 46M15, 46A16}
\thanks{Keywords: Banach and quasi Banach modules, exact sequence, Schatten classes}

\address{Departamento
de Matem\'{a}ticas and IMUEx, Universidad de Extremadura}
\address{ Avenida de Elvas, 06071--Badajoz, Spain.} \email{fcabello@unex.es}
\date{}

\noindent {\footnotesize Version \today $\,\,\mapsto$ RMI}

\bigskip


 
\theoremstyle{plain}
\newtheorem{problem}{Problem}
\newtheorem{theorem}{Theorem}
\newtheorem{proposition}{Proposition}
\newtheorem{corollary}{Corollary}
\newtheorem{lemma}{Lemma}
\theoremstyle{remark}
\newtheorem{definition}{Definition}
\newtheorem{example}{Example}
\newtheorem{remark}{Remark}
\newtheorem{remarks}{Remarks}
\newcommand{\R}{\mathbb{R}}
\newcommand{\N}{\mathbb{N}}
\newcommand{\K}{\mathbb{K}}
\newcommand{\C}{\mathbb{C}}
\newcommand{\Pro}{\mathbb{P}}
\newcommand{\e}{\varepsilon}
\newcommand{\yxz}{\ensuremath{0\To Y\To Z\To X\To0\:}}
\newcommand{\ywz}{\ensuremath{0\to Y\to X_\Omega \to Z\to0\:}}
\newcommand{\ywzo}{\ensuremath{0\to Y\to Y\oplus_\Omega Z_0 \to Z_0\to0\:}}

\newcommand{\Lo}{\ensuremath{L_0 }}
\newcommand{\Lp}{\ensuremath{L_p }}
\newcommand{\Loo}{\ensuremath{{L^\infty }}}
\newcommand{\loo}{\ensuremath{\ell^\infty }}
\newcommand{\lpo}{\ensuremath{\ell^p_{0} }}
\newcommand{\lqo}{\ensuremath{\ell^q_{0} }}
\newcommand{\lp}{\ensuremath{\ell^p}}
\newcommand{\To}{\ensuremath{\longrightarrow}}

\newcommand{\Hi}{\mathscr H}
\newcommand{\xoy}{x\otimes y}

\newcommand{\W}{\mathscr W}

\newcommand{\seqF}{\ensuremath{0\to Y\to Y\oplus_FZ\to Z\to0\:}}

\newcommand{\seqG}{\ensuremath{0\to Y\to Y\oplus_GZ\to Z\to0\:}}

\newcommand{\F}{\ensuremath{F:Z\to Y\:}}

\newcommand{\Lin}{\ensuremath{L:Z\to Y\:}}

\newcommand{\B}{\ensuremath{B:Z\to Y\:}}

\def\Ext{\operatorname{Ext}}
\def\Hom{\operatorname{Hom}}
\def\PB{\operatorname{PB}}
\def\PO{\operatorname{PO}}
\def\El{\operatorname{Ext}}
\def\dim{\operatorname{dim}}
\def\supp{\operatorname{supp}}
\def\Bil{\operatorname{\mathfrak B}}
\def\dist{\operatorname{dist}}
\def\M{\operatorname{M}}

\def\tr{\operatorname{tr}}
\def\rk{\operatorname{rk}}
\def\spa{\operatorname{span}}

\bibliographystyle{plain}

\begin{abstract}
An extension of $X$ by $Y$ is a short exact sequence of quasi Banach modules and homomorphisms $0\To Y\To Z\To X\To 0$. When properly organized all these extensions constitute a linear space denoted by $\Ext_B(X,Y)$, where $B$ is the underlying (Banach) algebra. In this paper we ``compute'' the spaces of extensions for the Schatten classes when they are regarded in its natural (left) module structure over $B=B(\mathscr H)$, the algebra of all operators on the ground Hilbert space. Our main results can be summarized as follows:
$$
\Ext_B(S^p,S^q)=\begin{cases}0 &\text{if $0<q<p\leq\infty$ or $p=q=\infty$},\\
\Ext_\C(S^1,\C)  &\text{if $q=p$ is finite},\\
\Ext_\mathbb C(\mathscr H) &\text{if $0<p<q\leq\infty$}.
\end{cases}
$$
In the first case, every extension $0\To S^q\To Z\To S^p\To 0$ splits and so $X=S^q\oplus S^p$. In the second case, every self-extension of $S^p$ arises (and gives rise) to a minimal extension of $S^1$ in the quasi Banach category, that is, a short exact sequence $
0\To \C
\To M
\To S^1\To 0
$. In the third case, each extension corresponds to a ``twisted Hilbert space'', that is, a short exact sequence $
0\To \mathscr H
\To T
\To \mathscr H\To 0
$.
Thus, the subject of the paper is closely connected to the early ``three-space'' problems studied (and solved) in the seventies by Enflo, Lindenstrauss, Pisier, Kalton, Peck, Ribe, Roberts, and others.
\end{abstract}

\maketitle

\markboth{Nonlinear Centralizers in Homology. II}{The Schatten classes}

\section{Introduction}

Let $A$ be a Banach algebra and let $X$ and $Y$ be quasi Banach modules over $A$.
  An extension of $X$ by $Y$ is a short exact sequence of quasi Banach modules and homomorphisms
$$
\begin{CD}
 0@>>> Y@>>> Z@>>> X@>>> 0
\end{CD}
$$
Less technically we may think of $Z$ as a module containing $Y$ as a closed submodule in such a way that $Z/Y$ is (isomorphic to) $X$. The extension is said to be trivial (or to split) if $Y$ is complemented in $Z$ through a homomorphism. This roughly means that $Z$ is the direct sum $Y\oplus X$ and the arrows are the obvious ones.

When properly classified and organized the extensions of $X$ by $Y$ constitute a linear space, denoted by $\Ext_A(X,Y)$, whose zero is the trivial extension. When $Y=X$ we just write $\Ext_A(X)$.

While the homomorphisms between a given couple of modules display the most basic links between them, extensions reflect
much more subtle connections, often in an encrypted or disguised form.

\subsection{Summary}
In this paper we deal with extensions of the Schatten clases $S^p$ for $0<p\leq\infty$ when these are regarded as modules over $B=B(\mathscr H)$, the algebra of all (linear, bounded) operators on the underlying Hilbert space $\mathscr H$. Thus we are concerned with short exact sequences of (say left) $B$-modules
\begin{equation}\label{spsq}
\begin{CD}
 0@>>> S^q@>>> Z@>>> S^p@>>> 0
\end{CD}
\end{equation}
We perform a rather complete study of such objects.
The leading idea of the paper is that each extension of the form (\ref{spsq}) corresponds to a ``centralizer'' from $S^p$ to $S^q$, that is, a mapping that, despite of not being linear nor bounded, ``almost commutes'' with the outer products in the sense of obeying an estimate
$$
\|\Phi(af)-a\Phi(f)\|_q\leq M\|a\|_B\|f\|_p,
$$ 
for some $M$, all $a\in B$ and every finite-rank $f$.

\medskip

Let us describe the organization of the paper and highlight its main results.
This Section contains, apart from this general introduction, a list of notations and conventions that will be used along the paper.

In Section~\ref{centralizers} we give the definition of a centralizer and explain the correspondence between centralizers and extensions. We also provide some simplifications and the main ``classical'' examples that substantiate the paper.

Section~\ref{qlessp} is entirely devoted to proving that $\Ext_B(S^p,S^q)=0$ for $0<q<p\leq \infty$. The proof depends on Raynaud's representation of the ultrapowers of the Schatten classes and exploits the rather vague idea that a good enough description of the operators on ultrapowers often gives information about the extensions of the base spaces. 

In Section~\ref{sec:isom} we prove that the space $\Ext_B(S^p,S^q)$ depends only on the parameter $q^{-1}-p^{-1}$.
In particular, we obtain that $\Ext_B(S^p)$ is basically independent on $0<p<\infty$. What is quite useful since, while some properties of $\Ext_B(S^p,S^q)$ are easier to handle when $q\geq 1$, other properties are much more easy when $p<1$. The results are presented first for centralizers and then in the classical homological way, using the Hom-Ext sequences.

Section~\ref{qgp} studies extensions (\ref{spsq}) for $q\geq p$. By using almost summing operators it is shown that every ``twisted Hilbert space'', that is, every extension of Banach spaces and operators
$$
\begin{CD}
 0@>>> \Hi@>>> T@>>> \Hi@>>> 0,
\end{CD}
$$
gives rise to an extension of $S^p$ by $S^q$ in the category of left (or right) $B$-modules. And, conversely, every such an extension induces a twisted Hilbert space, which arises as its ``spatial part''. When $q>p$ these processes are each inverse of the other and provide natural isomorphism between $\Ext_B(S^p,S^q)$ and $\Ext_\C(\Hi,\Hi)$.

In Section~\ref{sec:Kspaces} we obtain the surprising result that, for each $0<p<1$, there are nontrivial extensions of quasi Banach spaces
$$
\begin{CD}
 0@>>> \C@>>> E @>>> S^p@>>> 0
\end{CD}
$$
Thus $S^p$ is not a $\mathcal{K}$-space for $0<p<1$ in striking contrast to the commutative situation where $\ell^p$ is the prototypical $\mathcal{K}$-space!

We also relate minimal extensions of $S^1$ to centralizers and we show that quasilinear functions $\phi:S^1\To\C$ and ``self-centralizers'' on $S^p$ are two faces of the same coin.

Finally, in Section~\ref{sec:bicentralizers}, we study ``bicentralizers'', that is, those centralizers associated to bimodule extensions. We show that all bicentralizers from $S^p$ to $S^q$ are trivial unless $p=q$. As for  ``self-bicentralizers'' on $S^p$ we complete a result by Kalton showing that every symmetric $\ell^\infty$-centralizer on $\ell^p$ extends to a bicentralizer on $S^p$, regardless of the value of $p$.
\medskip

To sum up, we have:
$$
\Ext_B(S^p,S^q)=\begin{cases}0 &\text{if $0<q<p\leq\infty$ or $p=q=\infty$},\\
\Ext_\C(S^1,\C)  &\text{if $q=p$ is finite},\\
\Ext_\mathbb C(\mathscr H) &\text{if $0<p<q\leq\infty$}.
\end{cases}
$$
We believe that even the existence of a nontrivial extension of Banach modules of the form
$$
\begin{CD}
 0@>>> K@>>> Z@>>> S^1@>>> 0,
\end{CD}
$$
which corresponds to the choice $(p,q)=(1,\infty)$, is quite surprising.
It is remarkable that the results of the present paper are so cleanly connected with the early ``three space'' problems. We refer the reader to \cite[Chapter 5]{kpr}, \cite[Chapter 3]{cg}, \cite[Chapter 14]{bl}, \cite[Section 4]{k-handbook} or \cite[Sections 8 and 9]{kal-mon} for basic information on the topic.

\subsection{Background}
The study of the module structures of noncommutative $L^p$ spaces built over a general von Neumann algebra $\mathcal M$ goes back to their inception. However, the computation of the spaces of homomorphisms, which plays a r\^ole in this paper, is relatively recent; see \cite{js}.

Not much is known about the corresponding spaces of extensions $\Ext_\mathcal M(L^p,L^q)$ for general $\mathcal M$.

The notion of a centralizer is an invention of Kalton, 
who introduced it in the memoir \cite{kalcom}, isolating a property shared
by most ``derivations'' appearing in interpolation theory.

By following ideas of  \cite{kal-diff} it is proved in \cite{ccgs} that $\Ext_\mathcal M(L^p)\neq 0$ for every (infinite-dimensional) $\mathcal M$ and other related results. Some loose ends were tied up in \cite{c-noncomm}. Not surprisingly, these papers make heavy use of complex interpolation theory.

The approach of this paper also originates in Kalton's work. Indeed, the idea of representing extensions by centralizers is already in \cite{kalcom}. Even if the connection between centralizers and extensions is deliberately neglected in both \cite{k-trace} and \cite{kal-diff}, these papers should be considered as the first serious studies on self-extensions of the Schatten classes within the category of quasi Banach bimodules over $B$. The paper \cite{suarez2014} contains some remarks on the structure of these extensions.

The commutative situation is settled in \cite{c} with somewhat different techniques. Considering the usual Lebesgue spaces $L^p=L^p(\mu)$ for an arbitrary measure $\mu$ as $L^\infty$-modules with ``pointwise'' multiplication we have $\Ext_{L^\infty}(L^p,L^q)=0$ when $p\neq q$ and  $\Ext_{L^\infty}(L^p)=  \Ext_{L^\infty}(L^1)$ for every $p\in(0,\infty)$. The preceding identity had been proved  for $p\in(1,\infty)$ in \cite{kalcom}.
Apologizing in advance for the pun, the present paper can be seen as a ``crossed product'' of \cite{kal-diff} and \cite{c}.

Some authors consider a more restrictive notion of extension by requiring the splitting in the quasi Banach category (``no linear obstruction to split''). This leads to the study of the amenability of the underlying algebra, a major theme in the homology of Banach algebras \cite{hel}. Although we have not pursued this point, the results of this paper suggest that if (\ref{spsq}) splits as an extension of quasi Banach spaces, then so it does as an extension of quasi Banach modules over $B$, which is easy to prove, and well-known,  for $q\geq 1$.

Finally, we refer the reader to \cite{willian2018} for a quite interesting study of extensions in the related setting of operator spaces.

\subsection{Notation and some general conventions}\label{dnc}
\begin{itemize}
\item The ground field is $\mathbb C$, the complex numbers.
\item $\mathscr H$ is the underlying separable Hilbert space where our operators act and $\langle \cdot |\cdot \rangle$ is the scalar product in $\mathscr H$.
\item $B=B(\mathscr H)$ is the Banach algebra of all (linear, bounded) operators on $\mathscr H$. A ``projection'' is a self-adjoint idempotent of $B$. The ideal of finite rank operators is denoted by $\frak F$. The ideal of compact operators is denoted by $K$.
\item $L(\mathscr H)$ is the algebra of all (not necessarily continuous) linear endomorphisms of $\mathscr H$.
\item  If $x\in\mathscr H$ and $y\in Y$, then $x\otimes y:\mathscr H\To Y$ is the rank-one operator given by $h \mapsto \langle h|x\rangle y$. 

\item The weak operator topology (WOT) in $B$ is that generated by the seminorms $u\mapsto |\langle y | u(x)\rangle|$, with $x,y\in\mathscr H$.

\item If $V$ is any linear (respectively, quasinormed) space, then $V^\star$ (respectively, $V'$) denotes the space of linear functionals (respectively, bounded linear functionals) on $V$.
 The symbol $^*$ is reserved for the Hilbert space adjoint.

    \item Let $U,V$ and $W$ be arbitrary sets and  $\varphi: U\To V$ any mapping. We define $\varphi_\circ: U^W\To V^W$ by $\varphi_\circ(f)=\varphi\circ f$. Similarly, $\varphi^\circ: W^V \To W^U$ is defined as $\varphi^\circ(f)= f\circ \varphi$. The identity on $U$ is denoted by ${\bf I}_U$.

\item Let $v$ be a finite rank endomorphism of the linear space $V$ (no topology is assumed). Then the trace of $v$ is given by $\tr v=\sum_{i=1}^n v_i^\star(v_i)$ provided $v=\sum_{i=1}^n v_i^\star\otimes v_i$, with $v_i^\star\in V^\star, v_i\in V$. The trace does not depend on the given representation since, after the identification of the finite rank endomorphisms of $V$ with $V^\star\otimes V$, the trace is nothing different from the linearization of the obvious bilinear function $V^\star\times V\To\mathbb C$. If $u$ is any endomorphism of $V$ and $v$ has finite rank, one has $\tr(u\circ v)=\tr(v\circ u)$.

\item We use $M$ for a constant independent on operators and vectors but perhaps depending on the involved spaces  and centralizers and which may vary from line to line.

\item The distance between two maps $\phi$ and $\psi$ (acting between the same quasinormed spaces) is the least constant $\delta$ for which one has
$\|\phi(x)-\psi(x)\|\leq \delta\|x\|$ for every $x$ in the common domain.

\item A mapping $\phi:U\To V$ acting between linear spaces is said to be homogeneous if $\phi(tu)=t\phi(u)$ for every $t\in\mathbb C$ and $u\in U$.
\end{itemize}

\section{Centralizers and extensions}\label{centralizers}
In this Section we consider modules on the left unless otherwise stated.
Let $A$ be a Banach algebra that for all purposes in this paper will be a $C^*$-algebra.
A quasinormed module over $A$ is a quasinormed space $X$ together with a jointly continuous outer multiplication
$A\times X\To X$ satisfying the traditional
algebraic requirements. If the underlying space is
complete (that is, a quasi Banach space) we call it a quasi Banach module. Given
quasinormed modules $X$ and $Y$, a homomorphism $u:X\To Y$ is an
operator such that $u(ax)=au(x)$ for all $a\in A$ and $x\in X$.
Operators and homomorphisms are assumed to be continuous unless
otherwise stated. If no continuity is assumed, we speak of linear
maps and morphisms.
We use $\Hom_A(X,Y)$ for the space of homomorphisms and $\mathscr M_A(X,Y)$ for the morphisms.
 If there is no possible confusion about the underlying algebra $A$, we omit the subscript.

Quasinormed right modules and bimodules and their homomorphisms are defined in the obvious way.

 In general, $\Hom_A(X,Y)$ carries no module structure. However, if $X$ is a bimodule instead of a mere left module, then $\Hom_A(X,Y)$ can be given a structure of left module letting
$
(ah)(x)=h(xa)
$, where $h\in \Hom_A(X,Y),x\in X, a\in A$. Similarly, if $Y$ is a bimodule, then the multiplication $ha(x)=h(x)a$ makes $\Hom_A(X,Y)$ into a right module.

These structures are functorial in the obvious sense.

\subsection{Extensions}
 An extension of $X$ by $Y$ is a short exact sequence of quasi Banach modules and homomorphisms
\begin{equation}\label{s}
\begin{CD}
 0@>>> Y@>\imath >> Z@>\pi>> X@>>> 0
 \end{CD}
\end{equation}
The open
mapping theorem guarantees that $\imath$ embeds  $Y$ as a closed
submodule of $Z$ in such a way that the corresponding quotient is
isomorphic to $X$. Two extensions $0 \To Y \To Z_i \To X \To 0$
($i=1,2$) are said to be equivalent if there exists a homomorphism
$u$ making commutative the diagram
$$
\xymatrixcolsep{3.5pc}\xymatrixrowsep{0.5pc}\xymatrix{
  & & Z_1 \ar[dd]^u \ar[dr] \\
 0 \ar[r] & Y \ar[ur]\ar[dr] & & X \ar[r] & 0\\
&  & Z_2 \ar[ur]
}
$$
By the five-lemma \cite[Lemma 1.1]{hs}, and the open
mapping theorem, $u$ must be an isomorphism. We say that (\ref{s})
 is trivial if it is equivalent to the direct sum sequence
$$
\begin{CD}
 0@>>> Y@>\jmath >> Y\oplus X@>\varpi>> X@>>> 0
 \end{CD}
 $$ 
 in which $\jmath(y)=(y,0)$ and $\varpi(y,x)=x$.
  This happens if and only if (\ref{s}) splits, that is, there is a homomorphism $Z\To Y$ which
is a left inverse for the inclusion $\imath: Y\To Z$; equivalently, there
is a homomorphism $X\To Z$ which is a right inverse for the
quotient $\pi: Z\To X$. 

Given
quasi Banach modules
 $X$ and $Y$, we denote by
$\Ext_A(X,Y)$ the set of all possible extensions
(\ref{s}) modulo equivalence. When $Y=X$ we just write $\Ext_A(X)$. 

By using pull-back and push-out
constructions, it can be proved that $\Ext_A(X,Y)$ carries a ``natural'' linear
structure in such a
way that the (class of the) trivial extension corresponds to 0. This can be seen in \cite[Chapter 4, \S~9]{hs}; the approach based on injective or projective representations completely fails
dealing with quasi Banach modules since there are neither injective nor
projective objects. Thus, $\Ext_A(X,Y)=0$ means ``every extension $\yxz$ splits".

Taking $A$ as the ground field one recovers extensions in the quasi Banach space setting.

\subsection{Centralizers}\label{they}
In this paper we study extensions by means of a certain type of nonlinear (nor bounded) maps called centralizers. These offer a useful and relatively simple way to construct, describe and handle extensions that works fine with the Schatten classes.

\begin{definition}\label{def:centralizer}
Let $X$ and $Y$ be a quasinormed modules over a Banach algebra $A$ and let $W$ be another $A$-module containing $Y$ the the purely algebraic sense. Let further $\Phi: X\To W$ be a homogeneous mapping.
\begin{itemize}
\item[(a)]
We say that $\Phi$ is quasilinear from $X$ to $Y$ if, for every $f,g\in X$, the difference  $\Phi(f+g)-\Phi f-\Phi g$ belongs to $Y$ and
$
\|\Phi(f+g)-\Phi(f)-\Phi(g)\|_Y\leq Q(\|f\|_X+\|g\|_X)
$
for some constant $Q$ independent on $f,g$.
\item[(b)] We say that $\Phi$ is a left centralizer from $X$ to $Y$ if there is a constant $C$ such that for every $a\in A$ and every $ f\in X$ the difference $\Phi(af)-a\Phi(f)$ belongs to $Y$ and
$$
\|\Phi(af)-a\Phi(f)\|_Y\leq C\|a\|_A\|f\|_X.
$$
Right centralizers are defined analogously, using right module structures.
\item[(c)] Finally, $\Phi$ is said to be a bicentralizer over $A$ if it is both a left centralizer and a right centralizer. A bicentralizer obeys an estimate of the form
$$
\|\Phi(afb)-a\Phi(f)b\|_Y\leq C\|a\|_A\|f\|_X \|b\|_A.
$$
\end{itemize}
\end{definition}

If necessary, the least constants for which the preceding inequalities hold will be denoted by $Q(\Phi), L(\Phi), R(\Phi)$ and $B(\Phi)$, respectively.

In this paper the underlying algebra will always be $B$ and either $X=S^p_0$ and $Y=W=S^q$ (preferably) or $X=S^p, Y=S^q$ and $W=L(\Hi)$ (if there is no choice).

Let is briefly describe the connection between centralizers and extensions. Let $X$ and $Y$ be quasi Banach spaces. Let $W$ be a linear space containing $Y$ and $X_0$ a dense subspace of $X$. Let further $\Phi:X_0\To W$ be quasilinear from $X_0$ to $Y$. Then the set
$$
Y\oplus_\Phi X_0=\{(g,f)\in W\times X: f\in X_0, g-\Phi f\in Y\}.
$$
is a linear subspace of $W\times X$ and the functional
$$
\|(g,f)\|_\Phi=\|g-\Phi f\|_Y+\|f\|_X
$$
is a quasinorm on it.
We define maps $\imath:Y\To Y\oplus_\Phi X_0$ and $\pi : Y\oplus_\Phi X_0\To X_0$ by $\imath(g)=(g,0)$ and $\pi(g,f)=f$, respectively. Clearly, $\imath$ is ``isometric'', while $\pi$ maps the unit ball of $Y\oplus_\Phi X_0$ onto that of $X_0$. Thus, we have    an exact sequence of quasinormed spaces and relatively open operators
\begin{equation}\label{yox}
\begin{CD}
0@>>> Y @>\imath>> Y\oplus_\Phi X_0@>\pi>> X_0@>>> 0.
\end{CD}
\end{equation}
If, besides, $\Phi$ is a left centralizer, then the product $a\cdot(g,f)=(ag,af)$ 
 makes  $
Y\oplus_\Phi X_0$ into a quasinormed module over $A$ and the arrows in
the preceding diagram become homomorphisms. Indeed,
$$
\|a(g,f)\|_\Phi= \|ag-\Phi(af)\|_Y+\|af\|_X
=  \|ag-a\Phi f+ a\Phi f-\Phi(af)\|_Y+\|af\|_X
\leq M \|a\|_A\|(g,f)\|_\Phi.
$$
Let $Z_\Phi$ be the completion of $Y\oplus_\Phi X_0$. This is a quasi Banach module and there is a unique surjective homomorphism $Z_\Phi\To Z$ extending the quotient in (\ref{yox}) we denote again by $\pi$. We have a commutative diagram
$$
\xymatrixcolsep{3.5pc}\xymatrix{
 0 \ar[r] & Y \ar[r] \ar@{=}[d] & Y\oplus_\Phi X_0\ar[r] \ar[d] & X_0\ar[r] \ar[d] & 0\\
 0 \ar[r] & Y \ar[r] & Z_\Phi \ar[r] & X\ar[r] & 0\\
}
$$
in which the vertical arrows are inclusions and the horizontal rows are exact. We will always refer to the lower row in this diagram as the extension (of $X$ by $Y$) induced by $\Phi$.

It is easily seen that two centralizers $\Phi$ and $\Gamma$ (acting between the same sets, say $X_0$ and $W$) induce equivalent extensions if and only if there is a morphism $\alpha:X_0\To W$ such that the differences $\Phi(f)-\Gamma(f)- \alpha(f)$ fall in $Y$ and satisfy an estimate $\|\Phi(f)-\Gamma(f)- \alpha(f)\|_Y\leq M\|f\|_X$ for some $M$ and all $f\in X_0$. 

In this case we say that $\Phi$ and $\Gamma$ are equivalent centralizers, and we write $\Gamma\sim \Phi$.
If the preceding inequality holds for $\alpha=0$, that is, if $\Phi-\Gamma$ is bounded from $X_0$ to $Y$, then we say that $\Phi$ and $\Gamma$ are strongly equivalent and we write $\Gamma\approx\Phi$.

In particular the extension induced by $\Phi$ is trivial if and only if there is a morphism $\alpha: X_0\To W$ such that $\Phi-\alpha$ takes values in $Y$ and satisfies  $\|\Phi(f)-\alpha(f)\|_Y\leq M\|f\|_X$ for some $M$ and all $f\in X_0$. In this case we say that $\Phi$ is a trivial centralizer.

\subsection{The Schatten classes $S^p$}
For $p\in (0,\infty)$, let $\ell^p$ denote quasi Banach space of (complex) sequences $(t_n)$ for which the quasinorm $|(t_n)|_p=\left(\sum_n|t_n|^p\right)^{1/p}$ is finite.

Let $f$ be a compact operator on the Hilbert space $\mathscr H$. The singular numbers of $f$ are the eigenvalues of $|f|=(f^*f)^{1/2}$ arranged in decreasing order and counting multiplicity.
The Schatten class $S^p$ consists of those operators on $\mathscr H$ whose sequence of singular numbers $(s_n(f))$ belongs to $\ell^p$. It is a quasi Banach space under the quasinorm $\|f\|_p=|(s_n(f))|_p$.
Each $f\in S^p$ has an expansion $f=\sum_ns_n\: x_n\otimes y_n$, where $s_n$ are its singular numbers and $(x_n)$ and $(y_n)$ are orthonormal sequences in $\mathscr H$. This is called a Schmidt representation of $f$.
$S^p$ is a quasi Banach bimodule over $B$ in the obvious way: given $f\in S^p$ and $a,b\in B$ one has $afb\in S^p$ and $\|afb\|_p\leq \|a\|_B\|f\|_p\|b\|_B$. The submodule of finite rank operators is denoted by $S^p_0$.
The structure of homomorphisms between Schatten classes is fairly simple. Indeed, one has
\begin{equation}\label{eq:HomSpSq}
\Hom_B(S^p,S^q)=\begin{cases} S^r & \text{if $0<q<p<\infty$, where $p^{-1}+r^{-1}=q^{-1}$;}\\
B & \text{if $p\leq q$.}
\end{cases}
\end{equation}
This should be understood as follows: each operator $g$ in the right-hand side defines a homomorphism $\gamma: S^p\To S^q$ by multiplication on the right $\gamma(f)=fg$. Moreover, the norm of $g$ in the corresponding space equals $\|\gamma: S^p\To S^q\|$ and every homomorphism arises in this way.

It will be convenient at some places to consider right module structures. We indicate this just by putting the (algebra) subscript on the right. Thus, for instance, $\Hom(X,Y)_A$ is the space of homomorphisms of right modules from $X$ to $Y$, which are assumed to be (quasinormed) right modules over $A$. The meaning of $\mathscr M(X,Y)_A, \Ext(X,Y)_A$ or ``right centralizer'' should be clear.

It it worth noticing that the right module structure of Schatten classes is ``isomorphic'' to the left one throughout the involution: $fa=(a^* f^*)^*$. Thus, for instance, if $u:S^p\To B$ is a morphism of left (respectively, right) modules, then we obtain a morphism of right (respectively, left) modules thus: $f\mapsto (u(f^*))^*$. The same formula can be used to exchange left and right homomorphisms, centralizers, and the like. We will use this fact without further mention.

\begin{lemma}\label{mor}
\begin{itemize}
\item[(a)] $\frak F$ is a projective left (or right) module over $B$ in the pure algebraic sense:
if $\varpi: X\To Y$ is a surjective morphism of left modules, then every morphism $\alpha: \mathfrak F\To Y$ lifts to $X$ in the sense that there is another morphism of left modules $\widehat{\alpha}: \mathfrak F\To Y$ such that $\alpha=\varpi\circ\widehat{\alpha}$.


    \item[(b)] $\mathscr M(\frak F,B)_B=L(\mathscr H)$ in the sense that for every morphism of right modules $\alpha:\frak F\To B$ there is a unique linear endomorphism $\ell$ of $\mathscr H$ such that $\alpha(f)=\ell\circ f$ for every $f\in\frak F$.
 \item[(c)] Similarly, $\mathscr M_B(\frak F,B)=L(\mathscr H)$ in the sense that for every morphism of left modules $\alpha:\frak F\To B$ there is a unique linear endomorphism $\ell$ of $\mathscr H$ such that $\alpha(f)=(\ell\circ f^*)^*$ for every $f\in\frak F$.
\item[(d)] Let $\ell:\mathfrak F\To\mathbb C$ be a linear map such that for each fixed $y\in \mathscr H$ one has $\ell(x\otimes y)\To0$ as $x\To0$ in $\mathscr H$. Then there is a linear endomorphism $L$ of $\mathscr H$ such that $\ell(f)=\tr(L\circ f)$ for all $x,y\in \mathscr H$.
\end{itemize}
\end{lemma}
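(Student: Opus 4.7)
The plan is to exploit the fact that $\mathfrak F$, viewed as a left $B$-module, is an algebraic direct sum of ``columns'' $Bp_i=\{e_i\otimes h:h\in\mathscr H\}$ indexed by an orthonormal basis $(e_i)$ of $\mathscr H$, where $p_i=e_i\otimes e_i$. Each $Bp_i$ is a direct summand of the free module $B$ (via the decomposition $B=Bp_i\oplus B(1-p_i)$), hence projective in the purely algebraic sense. For (a), I would lift $\alpha$ summand by summand: for each $i$ choose (using projectivity of $Bp_i$) a morphism $\widehat\alpha_i:Bp_i\To X$ with $\varpi\circ\widehat\alpha_i=\alpha|_{Bp_i}$, and assemble them into $\widehat\alpha:\mathfrak F\To X$ via the universal property of the algebraic direct sum.

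For (b) I would fix a unit vector $e\in\mathscr H$ and put $\ell(y):=\alpha(e\otimes y)(e)\in\mathscr H$. Using $(x\otimes y)a=(a^*x)\otimes y$ with the choice $a=x\otimes e$ (so that $a^*e=x$) and the right-module property $\alpha(fa)=\alpha(f)a$ gives
$$
\alpha(x\otimes y)=\alpha(e\otimes y)\circ(x\otimes e)=x\otimes\ell(y)=\ell\circ(x\otimes y),
$$
from which $\alpha(f)=\ell\circ f$ extends to $\mathfrak F$ by linearity. Linearity of $\ell$ is inherited from $\alpha$, and uniqueness follows by testing on rank-one operators. Part (c) drops out of (b) via the involution trick $f\mapsto(u(f^*))^*$ mentioned right after (\ref{eq:HomSpSq}): sending $\alpha$ to $f\mapsto\alpha(f^*)^*$ converts a left-module morphism into a right-module morphism, and (b) then supplies $\ell$.

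For (d) I would fix $y\in\mathscr H$; by hypothesis $x\mapsto\ell(x\otimes y)$ is a continuous functional on $\mathscr H$, conjugate-linear in $x$ (because $(\alpha x)\otimes y=\bar\alpha\,x\otimes y$), so Riesz yields a unique vector $Ly\in\mathscr H$ with $\ell(x\otimes y)=\langle Ly\,|\,x\rangle$. Additivity and homogeneity of $\ell$ in $y$, combined with uniqueness in Riesz, upgrade $y\mapsto Ly$ to a linear endomorphism $L$ of $\mathscr H$ (continuity of $L$ is neither claimed nor available, since the hypothesis gives continuity of $x\mapsto\ell(x\otimes y)$ only for fixed $y$). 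A one-line computation yields $L\circ(x\otimes y)=x\otimes Ly$ and $\tr(x\otimes Ly)=\langle Ly\,|\,x\rangle=\ell(x\otimes y)$; linearity then extends $\ell(f)=\tr(L\circ f)$ to all $f\in\mathfrak F$.

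The only delicate points I anticipate are bookkeeping: keeping track of the conjugate-linear dependence of $x\otimes y$ on $x$ that forces the Riesz pairing in (d) to be written as $\langle Ly\,|\,x\rangle$ rather than $\langle x\,|\,Ly\rangle$, and being careful in (a) that the lifting is purely algebraic so that the algebraic direct-sum decomposition genuinely suffices. No estimates or limit arguments beyond those supplied by the hypothesis are needed.
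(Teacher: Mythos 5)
Parts (b), (c), (d) are correct and essentially coincide with the paper's proofs; your handling of (c) via the involution $f\mapsto\alpha(f^*)^*$ is a clean way to make explicit what the paper dispatches as ``the left version of (b)''. The problem is in (a).

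Your starting premise there is false: $\mathfrak F$ is \emph{not} the algebraic direct sum of the columns $Bp_i$ over a fixed orthonormal basis $(e_i)$. Any element of a finite sum $\sum_{i\in F}Bp_i$ is annihilated on the right by every $p_j$ with $j\notin F$, but if $x=\sum_i 2^{-i}e_i$ and $y\neq0$ then $(x\otimes y)p_j=\langle e_j|x\rangle\,(e_j\otimes y)\neq0$ for \emph{every} $j$, so the rank-one operator $x\otimes y$ lies in no finite sum of columns. The natural map $\bigoplus_i Bp_i\To\mathfrak F$ is injective but not surjective, and the ``lift summand by summand'' strategy has nothing to act on for such an $f$. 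The fix -- and what the paper actually does -- is to use a \emph{Hamel} basis: the paper identifies $\mathfrak F\cong\mathscr H\otimes_\C\mathscr H'$ as bimodules, writes $\mathscr H'\cong\bigoplus_I\C$ over a Hamel basis $I$ of $\mathscr H'$, and gets $\mathfrak F\cong\bigoplus_I\mathscr H$ as left modules, with $\mathscr H$ a direct summand of $B$ (via $\eta\otimes-$ and $\delta_\eta$, exactly your $B=Bp_i\oplus B(1-p_i)$ observation for a single unit vector) and hence projective. Your column decomposition can be salvaged by replacing the orthonormal basis with a normalized Hamel basis of $\mathscr H$: then every $x$ is a finite linear combination of basis vectors, so $x\otimes y=\sum\bar c_i\,e_i\otimes y$ is a genuine finite sum of columns, and the columns are independent because the functionals $\langle\cdot\,|\,e_i\rangle$ are linearly independent in $\mathscr H'$. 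The point to internalize is that in infinite dimensions an orthonormal basis is not a Hamel basis, and (a) is precisely the step where that distinction matters.
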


\begin{proof}
(a) Of course, $B$ is a projective left (or right) module. Also, $\mathscr H$ is a left module under the obvious action $(a,h)\mapsto a(h)$, while $\Hi'$ is a right module under the dual action: $\langle h'a, h\rangle= \langle h', ah\rangle$.

Let us see that $\mathfrak{F}$ is a projective left module.
Fix any norm one $\eta\in\mathscr  H$. Then the map $\eta\otimes-: \mathscr H\To  B$ given by $h\mapsto \eta\otimes h$ is an injective (homo)morphism. The evaluation map $\delta_\eta:B\To\mathscr  H$ given by $\delta_\eta(u)=u(\eta)$ is a morphism of left modules and, quite clearly,
$\delta_\eta\circ(\eta\otimes-)={\bf I}_\Hi$.

Being a direct factor in $B$, $\mathscr H$ is projective too.

On the other hand, $\frak F\cong\mathscr H\otimes_\C \mathscr H'$ (as bimodules). If $I$ is a Hamel basis for $\mathscr H'$, then $\mathscr H'$ is linearly isomorphic to the direct sum $\bigoplus_{I}\mathbb C$. Combining, we have isomorphisms of left modules
$$
\frak F\cong  \mathscr  H\otimes_\C\mathscr  H'\cong \Hi \otimes_\C \left(\bigoplus_{I}\mathbb C\right) = \bigoplus_{I}\left(\Hi\otimes_\C \C \right)=
\bigoplus_{I} \mathscr H,
$$
and a direct sum of projective modules is again projective.

The proof that $\mathfrak{F}$ is right projective is similar: first we embed $\Hi'$ into $B$ fixing a normalized $y\in\Hi$ and then sending each $h'\in\Hi'$ into the rank-one operator $h'\otimes y$. This is a homomorphism of right modules. The corresponding projection is given as follows: given $f\in B$ one considers the Banach (not Hilbert) space adjoint $f':\Hi'\To \Hi'$ and the evaluation at $y'=\langle-|y\rangle$. This shows that $\Hi'$ is right projective. Now, if $J$ is a Hamel basis of $\Hi$, then, as a bimodule, 
$\mathfrak{F}$ is isomorphic to the direct sum $\bigoplus_{J} \mathscr H'$, which is projective.

(b) is very easy. Take $x,y\in \mathscr H$, with $\|x\|= 1$. Then
$
\alpha(x\otimes y)= \alpha((x\otimes y)(x\otimes x))=
(\alpha(x\otimes y))(x\otimes x).
$
Hence there is $z=z(x,y)\in\mathscr H$ such that $\alpha(x\otimes y)=x\otimes z$. It is easily seen that $z$ does not depend on the first variable while it depends linearly on the second one. Thus the rule $\ell(y)=z$ is an endomorphism of $\mathscr H$. Quite clearly one has $\alpha(f)=\ell \circ f$ when $f$ has rank one and the same is true for every $f\in\frak F$.

(c) is just the left version of (b).

(d) Fix $y\in\mathscr H$. The hypothesis implies that $x\mapsto \ell(x\otimes y)$ is a continuous, conjugate-linear functional on $\mathscr H$ and by Riesz representation theorem there is $z\in\mathscr H$ such that $\ell(x\otimes y)=\langle z |x \rangle$. Putting $z=L(y)$ we obtain a transformation of $\mathscr H$ which is easily seen to be linear. And since $\ell(x\otimes y)= \langle L(y) |x \rangle =\tr(x\otimes L(y))= \tr(L\circ(x\otimes y))$ we are done.
\end{proof}

The following result is a slight improvement of Kalton's \cite[Proposition 4.1]{k-trace}, with a different proof. 

\begin{lemma}\label{lem:spare}
With the same notations of Definition~\ref{def:centralizer}, let us assume that $A=B$ and that $X$ is either $S^p$ or $S^p_0$ for some $0<p\leq\infty$.
Then every homogeneous left (or right) centralizer $\Phi: X\To W$ is quasilinear.
\end{lemma}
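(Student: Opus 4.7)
The plan is to reduce quasilinearity of $\Phi$ to two (or three) applications of the centralizer inequality by producing a single element $F\in X$ that encodes $f$, $g$, and $f+g$ as module actions of bounded operators.

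First I fix a once-and-for-all identification of $\mathscr H$ with $\mathscr H\oplus\mathscr H$: choose isometries $v_1,v_2\in B$ satisfying $v_i^*v_j=\delta_{ij}\,{\bf I}_{\mathscr H}$ and $v_1v_1^*+v_2v_2^*={\bf I}_{\mathscr H}$ (existence follows since $\mathscr H$ is infinite-dimensional and separable). Given $f,g\in X$, set
$$
F=v_1f+v_2g\in X.
$$
The algebraic identities $v_i^*F=(\delta_{i1})f+(\delta_{i2})g$ give at once
$$
f=v_1^*F,\qquad g=v_2^*F,\qquad f+g=(v_1^*+v_2^*)F,
$$
and since $(v_1+v_2)^*(v_1+v_2)=2\,{\bf I}_{\mathscr H}$ we have $\|v_1^*+v_2^*\|_B=\sqrt 2$.

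Next I check the size of $F$ in $X$. Because $F^*F=f^*f+g^*g$, the $p$-th power of the Schatten norm is $\|F\|_p^p=\tr(f^*f+g^*g)^{p/2}$; the quasi-triangle inequality in $S^p$ (together with $\|v_i\|_B=1$) yields $\|F\|_X\leq C_p(\|f\|_X+\|g\|_X)$ for an absolute constant $C_p$, valid for every $0<p\leq\infty$ (with the obvious interpretation $\|F\|_\infty\leq \|f\|_\infty+\|g\|_\infty$ when $p=\infty$). Note also that $F$ has finite rank whenever $f$ and $g$ do, so this construction is compatible with both $X=S^p$ and $X=S^p_0$.

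Now apply the left centralizer estimate three times, to the factorizations $f=v_1^*F$, $g=v_2^*F$ and $f+g=(v_1^*+v_2^*)F$. Writing $\xi_1=\Phi(v_1^*F)-v_1^*\Phi(F)$, $\xi_2=\Phi(v_2^*F)-v_2^*\Phi(F)$ and $\xi_3=\Phi((v_1^*+v_2^*)F)-(v_1^*+v_2^*)\Phi(F)$, the linearity of the module action in $W$ gives the miraculous cancellation
$$
\Phi(f+g)-\Phi(f)-\Phi(g)=\xi_3-\xi_1-\xi_2\in Y,
$$
and by the left-centralizer inequality its $Y$-norm is at most
$$
L(\Phi)\bigl(\|v_1^*\|_B+\|v_2^*\|_B+\|v_1^*+v_2^*\|_B\bigr)\|F\|_X\leq (2+\sqrt2)\,L(\Phi)\,C_p\bigl(\|f\|_X+\|g\|_X\bigr),
$$
which is exactly the quasilinear estimate with $Q(\Phi)\leq (2+\sqrt2)\,C_p\,L(\Phi)$.

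The right-centralizer case is handled either by running the symmetric construction with $F=fv_1^*+gv_2^*$ (so that $Fv_i$ recovers $f$ and $g$), or by invoking the involution trick already described in the text, which converts $\Phi$ into a left centralizer of the same quality on the Schatten classes. The only real point to verify carefully is the estimate $\|F\|_X\le C_p(\|f\|_X+\|g\|_X)$; there are no other obstacles, the argument being just the standard ``doubling'' construction adapted to the $B$-module $S^p$.
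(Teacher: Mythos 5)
Your argument is correct, but it reaches the key factorization by a genuinely different route than the paper. Both proofs rest on the same skeleton: produce a single element $h$ with $\|h\|_p\lesssim\|f\|_p+\|g\|_p$ and contractions $a,b\in B$ with $f=ah$, $g=bh$, $f+g=(a+b)h$, then apply the centralizer estimate three times and telescope. The paper takes $h=(f^*f+g^*g)^{1/2}$ and invokes Schmitt's lemma to see that $a=f(f^*f+g^*g)^{-1/2}$ is contractive. You instead build the ``column operator'' $F=v_1f+v_2g$ from a pair of isometries with orthogonal ranges; since $F^*F=f^*f+g^*g$, your $|F|$ \emph{is} the paper's $h$, and the relations $v_i^*F=f,g$ hand you the contractions for free, with no external citation. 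What your version buys is self-containedness and explicit constants ($\|v_1^*+v_2^*\|_B=\sqrt2$ versus the paper's bound $\|a+b\|_B\le 2$); what it costs is generality: the two isometries exist because $\mathscr H\cong\mathscr H\oplus\mathscr H$, so the trick is tied to $B(\mathscr H)$ with $\dim\mathscr H=\infty$ (and would fail in a finite factor), whereas the Schmitt-based factorization works over any von Neumann algebra --- which matters for the generalizations contemplated in the concluding remarks. For the lemma as stated, with $A=B$, both are equally valid.

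One cosmetic slip: in the final estimate you bound $\|\xi_3-\xi_1-\xi_2\|_Y$ by the plain sum $\|\xi_1\|_Y+\|\xi_2\|_Y+\|\xi_3\|_Y$. Since $Y$ is only quasinormed you need the factor $\Delta_Y^2$ (as the paper includes), so the correct conclusion is $Q(\Phi)\le(2+\sqrt2)\,\Delta_Y^2\,C_p\,L(\Phi)$. This affects only the constant, not the validity of the proof.
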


\begin{proof}
We write the proof for left modules and $X=S^p$.
The key point is that if $f,g\in S^p$, then $h=(f^*f+g^*g)^{1/2}$ belongs to $S^p$ and one has $f=ah, g=bh$ for certain contractive $a, b\in B$ --whose initial projections agree with the final projection of $h$, if you want. Indeed one may take 
 $a=f(f^*f+g^*g)^{-1/2}$ which is contractive by Schmitt's
\cite[Lemma 2.2(c)]{schmitt}: just set $T=f^*f, S=h$ and follow Schmitt's notations.

 As for the quasinorm of $h$ we have
\begin{align*}
\|(f^*f+g^*g)^{1/2}\|_p&= \|f^*f+g^*g\|_{p/2}^{1/2}\leq \Delta_{p/2}^{1/2}\left( \|f^*f\|_{p/2}+\|g^*g\|_{p/2}	 \right)^{1/2}\\
&\leq  \Delta_{p/2}^{1/2}\left( \|f^*f\|_{p/2}^{1/2}+\|g^*g\|_{p/2}^{1/2}	\right)= \Delta_{p/2}^{1/2}(\|f\|_p+\|g\|_p),
\end{align*}
where $\Delta_{r}$ denotes the ``modulus of concavity'' of $S^{r}$, that is, $\Delta_r=2^{1/r-1}$ for $r<1$ and $\Delta_r=1$ for $r\geq 1$.
Now, if $\Phi:S^p\To W$ is a centralizer from $S^p$ to $Y$, and $f,g\in S^p$, then
\begin{align*}
\|\Phi(f+g)&-\Phi f-\Phi g\|_Y=\|\Phi((a+b)h)-\Phi(ah)-\Phi(bh)\|_Y\\
&\leq \Delta_Y^2(\|\Phi((a+b)h)-(a+b)\Phi h\|_Y+\|a\Phi h-\Phi(ah)\|_Y+\|b\Phi h-\Phi(bh)\|_Y)\\
&\leq \Delta_Y^2 L(\Phi) \big{(} (\|a+b\|_B+ \|a\|_B+\|b\|_B) \|h\|_p   \big{)} \\
&\leq  4 \Delta_Y^2\Delta_{p/2}^{1/2}L(\Phi)(\|f\|_p+\|g\|_{p}),
\end{align*}
and we are done.
\end{proof}

It is clear that, for fixed $Y$ and $p$, the quasilinear constant of $\Phi$ is controlled by the centralizer constant.

\begin{corollary}\label{all}
Every extension of $S^p$ by an arbitrary quasi Banach left (respectively, right) module $Y$ comes from a left (right) centralizer $\Phi:S^p_0\To Y$, up to equivalence.
\end{corollary}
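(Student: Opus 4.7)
The plan is to construct $\Phi$ as the difference of two ``sections'' of the quotient $\pi: Z \to S^p$: one bounded and homogeneous, the other linearly module-homomorphic but possibly discontinuous. Given an extension $0 \to Y \stackrel{\imath}{\To} Z \stackrel{\pi}{\To} S^p \To 0$, I would first use the open mapping theorem for quasi Banach modules (so $\pi$ is open) to pick a constant $M$ and a representative $B(f) \in \pi^{-1}(f)$ with $\|B(f)\|_Z \leq M \|f\|_p$ for every $f$ on the unit sphere of $S^p$; homogeneous extension yields a bounded homogeneous section $B: S^p \To Z$. Next, Lemma \ref{mor}(a) says $\mathfrak F = S^p_0$ is projective as a left $B$-module in the \emph{purely algebraic} sense, so the inclusion $S^p_0 \hookrightarrow S^p$ lifts through $\pi$ to a $B$-linear (but not necessarily continuous) section $L: S^p_0 \To Z$ with $\pi \circ L = \mathbf{I}_{S^p_0}$.

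With both sections in hand, set $\Phi = B\vert_{S^p_0} - L$. Since $B(f)$ and $L(f)$ have the same image $f$ under $\pi$, the difference lands in $\ker\pi = \imath(Y)$, which I identify with $Y$; clearly $\Phi$ is homogeneous. For the centralizer estimate, the identity $L(af) = aL(f)$ collapses the computation:
$$
\Phi(af) - a\Phi(f) = \bigl(B(af) - aB(f)\bigr) - \bigl(L(af) - aL(f)\bigr) = B(af) - aB(f).
$$
Both terms on the right have $Z$-quasinorm at most $M\|a\|_B\|f\|_p$, and because $\imath: Y \To Z$ is a topological embedding (another application of the open mapping theorem), the $Y$- and $Z$-quasinorms are comparable on $\imath(Y)$, giving $\|\Phi(af) - a\Phi(f)\|_Y \leq C\|a\|_B\|f\|_p$ as required. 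Lemma \ref{lem:spare} then guarantees $\Phi$ is automatically quasilinear, so it legitimately induces an extension of $S^p$ by $Y$.

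Finally, to identify this induced extension with the original one, I would define $u: Y \oplus_\Phi S^p_0 \To Z$ by $u(g,f) = \imath(g - \Phi(f)) + B(f)$. This is well defined since $g - \Phi(f) \in Y$, is bounded thanks to the very definition of $\|\cdot\|_\Phi$ together with the boundedness of $B$, and is a module homomorphism precisely because of the identity $B(af) - aB(f) = \Phi(af) - a\Phi(f)$ established above. It commutes with the inclusions of $Y$ and with the inclusion $S^p_0 \hookrightarrow S^p$ on the quotient side, so passing to the completion $Z_\Phi$ and invoking the five-lemma produces the desired equivalence.

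The only genuinely delicate ingredient is the existence of the linear module section $L$: quasi Banach modules have no projective objects in the topological sense, and Lemma \ref{mor}(a) is exactly what bypasses this difficulty by providing algebraic projectivity of $\mathfrak F$. Everything else is bookkeeping with the open mapping theorem and the push-out/pull-back formalism; the fact that one needs no continuity of $L$ is what allows the construction to succeed across the entire quasi Banach range.
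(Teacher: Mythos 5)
Your proposal is correct and follows essentially the same route as the paper: a bounded homogeneous section $B$ of $\pi$ minus an algebraic module-linear section obtained from the projectivity of $\mathfrak F$ (Lemma~\ref{mor}(a)), with the difference landing in $\ker\pi=Y$ and the centralizer estimate collapsing to $B(af)-aB(f)$. Your equivalence map $u(g,f)=\imath(g-\Phi(f))+B(f)$ simplifies to $g+L(f)$, which is exactly the homomorphism the paper uses (the paper then writes down its inverse explicitly instead of citing the five-lemma).
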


\begin{proof}We consider the case of left modules. Let
$\xymatrixcolsep{2.5pc}\xymatrixrowsep{0.5pc}\xymatrix{
0 \ar[r] & Y  \ar[r]^-\imath & Z \ar[r]^-\pi & S^p \ar[r] & 0 
}$ be an extension of quasi Banach modules over $B$. With no serious loss of generality we may assume $Y=\ker \pi$ and that $\imath$ is just the inclusion. Putting $Z_0=\pi^{-1}[S_0^p]$ we have the following commutative diagram
$$
\xymatrixcolsep{3.5pc}\xymatrixrowsep{0.5pc}\xymatrix{
  & & Z_0 \ar[dd] \ar[r] & S^p_0 \ar[rd] \ar[dd] \\
 0 \ar[r] & Y \ar[ur]\ar[dr] & & & 0\\
&  & Z \ar[r] & S^p \ar[ur]
}
$$
where the vertical arrows are plain inclusions. We shall show there is a centralizer $\Phi: S_0^p\To Y$ and an isomorphism of quasinormed modules $u$ making commutative the diagram
$$
\xymatrixcolsep{3.5pc}\xymatrixrowsep{0.5pc}\xymatrix{
  & & Y\!\oplus_\Phi \! S_0^p \ar[dd]^u \ar[dr] \\
 0 \ar[r] & Y \ar[ur]\ar[dr] & & S^p_0 \ar[r] & 0\\
&  & Z_0 \ar[ur]
}
$$
This obviously implies that $u$ extends to an isomorphism between the completion of $Y\oplus_\Phi S_0^p$ and $Z$ fitting in the corresponding diagram.

One can construct such a $\Phi$ as follows. First, let $B:S^p\To Z$ be a homogeneous bounded section of the quotient map $\pi:Z\To S^p$, which exists because $\pi$ is open. Clearly, $B(f)\in X_0$ if $f\in S^p_0$. On the other hand, by Lemma~\ref{mor}(a), there is a morphism $\alpha :S^p_0\To Z_0$ such that $\pi\circ \alpha={\bf I}_{S^p_0}$. Define $\Phi(f)=B(f)-\alpha(f)$ for $f\in S^p_0$. We have  $\pi(\Phi(f))=\pi(B(f))-\pi(\alpha(f))=0$ and so $\Phi$ takes values in $Y$. Clearly, $\Phi$ is a centralizer: given $f\in S^p_0$ and $a\in B$ one has
$$
\|\Phi(af)-a\Phi f\|_Y= \|B(af)-aB(f)\|_X\leq M\|a\|_B\|f\|_p.
$$
We define a morphism $u:Y\oplus_\Phi S^p_0\To Z_0$ by $u(y,f)=y+\alpha(f)$. This is a homomorphism in view of the bound
\begin{align*}
\|u(y,f)\|_Z&=\|y+\alpha(f)\|_Z\leq M\big{(}\|y-B(f)+\alpha(f)\|_Z+\|B(f)\|_Z\big{)}\\
&\leq M\big{(}\|y-\Phi(f)\|_Y+\|f\|_p\big{)}\leq M\|(y,f)\|_\Phi.
\end{align*}
The inverse of $u$ is
given by $v(z)=(z-\alpha(\pi(z)), \pi(z))$ for $z\in Z_0$. It is continuous since
$$
\|v(z)\|_\Phi=\|z-\alpha(\pi(z))- \Phi(\pi(z))\|_Y+\|\pi(z)\|_p= \|z-B(\pi(z))\|_Y+\|\pi(z)\|_p\leq M\|z\|_Z.
$$
This completes the proof.
\end{proof}

\subsection{Basic examples of centralizers}
The aim of this Section is to provide the reader with a stock of centralizers substantiating the approach of the paper.
Not surprisingly, these examples are due to Kalton.

Let $x:\N\To\C$ a sequence converging to zero. The rank-sequence of $x$ is defined as
$$r_x(n)=\big{|}\{k\in\N: \text{either } |x(k)|> |x(n)| \text{ or } |x(k)|= |x(n)| \text{ and } k\leq n\}\big{|},$$
that is, $r_x(n)$ is the place that $|x(n)|$ occupies in the decreasing rearrangement of $|x|$.

Kalton proved in \cite{kalcom} that if $\varphi:\R^2_+\To\C$ is a Lipschitz function vanishing at the origin, then the map $\phi:\ell^p\To\ell^\infty$ defined by
\begin{equation}\label{eq:phi}
\phi(x)=x\:\varphi\left(\log\frac{\|x\|_p}{|x|}, \log r_x\right)
\end{equation}
is a (quasilinear) self-centralizer on $\ell^p$, where $\ell^p$ is regarded as an $\ell^\infty$-module under the pointwise multiplication; this is a specialization of \cite[Theorem 3.1]{kalcom}. Moreover $
\phi$ is trivial if and only if $\varphi$ is bounded.

Actually these centralizers are symmetric in the sense that
$\phi(x\circ\sigma)= \phi(x)\circ\sigma$ when $\sigma$ is a permutation of the integers.

It is shown in \cite[Theorem 8.3]{kal-diff} that if $\phi$ is a symmetric $\ell^\infty$-centralizer on $\ell^p$, with $1<p<\infty$, then one can obtain a bicentralizer on $S^p$ as follows: for each $f\in S^p$ take a Schmidt expansion $f=\sum_n s_nx_n\otimes y_n$ and set $\Phi(f)= \sum_n t_n x_n\otimes y_n$, where $(t_n)_{n\geq 1}=\phi((x_n))$. Actually, all bicentralizers on $S^p$ arise in this way, up to strong equivalence.

Therefore, every Lipschitz function $\varphi:\R^2_+\To\C$ provides a bicentralizer on $S^p$ through the formula
\begin{equation}
\Phi(f)= \sum_n s_n\: \varphi\left(-\log\frac{s_n}{\|f\|_p}, \log n\right) x_n\otimes y_n,
\end{equation}
where $\sum_n s_nx_n\otimes y_n$ is a Schmidt expansion of $f$, at least when $1<p<\infty$. To be true Kalton had stablished this fact for all $p$ when $\varphi$ depends only on one of the variables by sheer force in \cite{k-trace}. We will complete these results in Section~\ref{sec:bicentralizers}; see Theorem~\ref{th:bic2}.

\section{The case $p>q$}

In this Section we prove that $\Ext_B(S^p,S^q)=0$ when $0<q<p<\infty$. As the reader may guess, what we actually prove is that every left centralizer
$S^p_0\To S^q$ is trivial. Theorem~\ref{qlessp} below  contains a slightly more precise statement.
To avoid annoying repetitions, througout the Section we consider only left module structures. This applies to modules, morphisms, homomorphisms and centralizers. All results remain true for right modules, with minor ajustments in the statements and proofs.

First we need to cut a given centralizer into ``small pieces'' without losing the relevant information it encodes.


Let $\Phi:S^p_0\To S^q$ be a left centralizer and $e\in B$ a finite-rank projection. Then we can define a centralizer $\Phi_e:S^p\To S^q$ by the formula $\Phi_e(f)=\Phi(fe)$. Of course, $\Phi_e$ is trivial. Indeed, taking $g=\Phi(e)$ we have
$$
\|\Phi_e(f)-fg\|_q= \|\Phi(fe)-f\Phi(e)\|_q\leq L(\Phi)\|f\|_B\|e\|_p\leq L(\Phi)\rk(e)^{1/p}\|f\|_p,
$$
where $\rk(e)$ is the dimension of the image of $e$.

\begin{lemma}\label{local} Let $\Phi:S^p_0\To S^q$ be a left centralizer, with $q$ finite. Then
$$\dist(\Phi,\mathscr M_B(S^p_0, S^q))=\sup_e\dist(\Phi_e,\mathscr M_B(S^p, S^q)),$$ where $e$ runs over all finite-rank projections in $B$.
\end{lemma}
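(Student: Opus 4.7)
The plan is to prove each inequality separately; write $D$ for the left-hand side and $S$ for the right-hand side.

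For $S\leq D$, the easy direction: given any morphism $\alpha\in\mathscr M_B(S^p_0,S^q)$ with $\|\Phi(g)-\alpha(g)\|_q\leq M\|g\|_p$ for every $g\in S^p_0$, I define $\alpha_e:S^p\to S^q$ by $\alpha_e(f):=\alpha(fe)$. Since $fe$ has finite rank, $\alpha_e$ is well-defined; it inherits left-module linearity from $\alpha$; and, using $\|e\|_B=1$, one has $\|\Phi_e(f)-\alpha_e(f)\|_q=\|\Phi(fe)-\alpha(fe)\|_q\leq M\|fe\|_p\leq M\|f\|_p$. Thus $\dist(\Phi_e,\alpha_e)\leq M$ for every finite-rank projection $e$, and letting $M\searrow D$ gives $S\leq D$.

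For $D\leq S$, assume $S<\infty$ and, for each finite-rank projection $e$, fix $\alpha_e\in\mathscr M_B(S^p,S^q)$ with $\|\Phi(fe)-\alpha_e(f)\|_q\leq S\|f\|_p$ for every $f\in S^p$. The crucial observation is that if $f\in S^p_0$ and $p_f$ denotes the projection onto $\overline{f^*(\mathscr H)}$, then $p_f$ has finite rank and $fe=f$ for every finite-rank projection $e\geq p_f$; consequently $\Phi_e(f)=\Phi(f)$ and $\|\Phi(f)-\alpha_e(f)\|_q\leq S\|f\|_p$ for all such $e$. The remaining task is to stitch the family $(\alpha_e)_e$ into a single morphism $\alpha\in\mathscr M_B(S^p_0,S^q)$ satisfying the same bound.

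To carry out the stitching I invoke Lemma~\ref{mor}(c): every left-module morphism from $\frak F=S^p_0$ into $B$ has the form $g\mapsto(\ell\circ g^*)^*$ for a linear endomorphism $\ell$ of $\mathscr H$, which on rank-one $g=x\otimes y$ becomes $\ell(x)\otimes y$. Each restriction $\alpha_e|_{S^p_0}$ is then encoded by some $\ell_e:\mathscr H\to\mathscr H$, and for $x\in e(\mathscr H)$ the bound above reads $\|\Phi(x\otimes y)-\ell_e(x)\otimes y\|_q\leq S\|x\|\|y\|$. Specializing to rank-one $e=\eta\otimes\eta$ (with $\eta$ a unit vector) and varying $y$ pins down $\ell_e(\eta)\in\mathscr H$ up to a bounded perturbation, and homogeneity of $\Phi$ together with its quasilinearity (Lemma~\ref{lem:spare}) lets one assemble the $\ell_e(\eta)$'s into a single linear $L:\mathscr H\to\mathscr H$. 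Setting $\alpha(f):=(L\circ f^*)^*$ then produces a left-module morphism from $S^p_0$ into $S^q$ with the rank-one estimate built in.

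The main obstacle is organizing the $\alpha_e$'s coherently and propagating the rank-one bound to arbitrary $f\in S^p_0$ without inflating the constant $S$. My strategy is to arrange that $\ell_e|_{e'(\mathscr H)}=\ell_{e'}|_{e'(\mathscr H)}$ whenever $e\geq e'$ (plausible because the estimate on $\{f:fe'=f\}$ only sees $\ell_e$ on $e'(\mathscr H)$), and then to use a Schmidt representation $f=\sum_n s_nx_n\otimes y_n$ together with the orthogonality of $(y_n)$ to recover the global bound $\|\Phi(f)-\alpha(f)\|_q\leq S\|f\|_p$ directly, avoiding any loss from the quasi-triangle constant of $S^q$.
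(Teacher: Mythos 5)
Your easy direction is fine and agrees with the paper's. The hard direction, however, has a genuine gap precisely at the step you yourself flag as the ``main obstacle'': the passage from the family $(\alpha_e)_e$ to a single morphism is the entire content of the lemma, and the coherence you call ``plausible'' cannot be arranged by direct selection. The estimate only determines $\ell_e$ on $e'(\mathscr H)$ up to a perturbation of norm about $2\Delta_q S$, so for a fixed $e'$ two majorants $e_1,e_2\geq e'$ give incompatible candidates for $\ell_{e'}$ with no canonical choice; some limiting device is unavoidable. The paper's proof supplies it: take an ultrafilter $\mathscr U$ refining the order filter on finite-rank projections, set $\phi(f)=\lim_{\mathscr U}\phi_e(fe)$ in the weak operator topology (for $f\in S^p_0$ the net is eventually bounded in $S^q$, hence WOT-relatively compact in $B$), then use the lower semicontinuity of $\|\cdot\|_q$ with respect to the WOT (\cite[Corollary 2.3]{dilworth}) to transfer the bound $\|\Phi(f)-\phi_e(f)\|_q\leq\delta\|f\|_p$ to the limit, and the WOT-continuity of $b\mapsto ab$ on bounded sets to see that the limit is a morphism. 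Your sketch contains neither the compactness step nor the semicontinuity step, and both are needed; ``homogeneity and quasilinearity of $\Phi$'' play no role in assembling the $\ell_e$'s, since the difficulty lies in the non-uniqueness of the approximating morphisms, not in $\Phi$.

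The fallback you propose in the last sentence --- recovering the bound for general $f\in S^p_0$ from the rank-one estimate via a Schmidt expansion --- also fails in the regime where this lemma is actually used. Comparing $\Phi\bigl(\sum_n s_nx_n\otimes y_n\bigr)$ with $\sum_n s_n\Phi(x_n\otimes y_n)$ costs $Q(\Phi)$ times a factor which (cf.\ the constant $M_1$ in (\ref{check}) in the proof of Theorem~\ref{the:spatial}) is finite only when $p<\min(1,q)$; for $q<p$, which is the case needed in Theorem~\ref{qlessp}, the relevant series diverges, and indeed a rank-one bound alone cannot detect triviality of a centralizer there. If you did achieve coherence of the $\ell_e$'s this propagation would be unnecessary anyway, since the estimate for $\alpha_e$ is already available on all of $\{f\in S^p_0: fe=f\}$ --- which only underlines that the unproved coherence/limit step is where all the work of the lemma lies.
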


\begin{proof}
It is clear that $\dist(\Phi,\mathscr M_B(S^p_0, S^q))\geq \dist(\Phi_e,\mathscr M_B(S^p, S^q))$ for every projection $e\in B$. 
Let us prove the other inequality.
Let $\delta$ be a constant such that for every finite-rank projection $e$ there is a morphism $\phi_e$ so that
$$\|\Phi_e f- \phi_e(f)\|_q\leq \delta \|f\|_p\quad\quad(f\in S^p).$$

Let $\mathscr U$ be an ultrafilter refining the order filter on the set of finite-rank projections of $B$. We define a mapping $\phi:S^p_0\To S^q$ by the formula
\begin{equation}\label{lim}
\phi(f)=\lim_\mathscr U\phi_e(fe)
\end{equation}
where the limit is taken in the WOT. 
The definition makes sense because for each $f\in S^p_0$ one has $fe=f$ for sufficiently large $e$. For these projections we have $\|\Phi(f)- \phi_e(f)\|_q\leq \delta\|f\|_p$ and thus the net $(\phi_e(fe))_e$ is (essentially) bounded in $S^q$ and so in $B$. As bounded subsets of $B$ are relatively compact in the WOT we see that (\ref{lim}) defines a map from $S^p_0$ to $B$. But $\|\cdot\|_q$ is lower semicontinuous with respect to the restriction of the WOT to $S^q$ (see \cite[Corollary 2.3]{dilworth}) and so
$$
\|\Phi(f)- \phi(f)\|_q\leq \lim_\mathscr U \|\Phi(f)- \phi_e(f)\|_q\leq \delta\|f\|_p\quad\quad(f\in S^p_0).
$$
In particular $\phi(f)$ belongs to $S^q$. Finally that $\phi$ is a morphism follows from the fact that, for fixed $a\in B$, the map $b\mapsto ab$ is WOT-continuous on bounded sets of $B$.
\end{proof}

The sought-after result reads as follows.

\begin{theorem}\label{qlessp}
Let $0<q<p<\infty$. There is a constant $K=K(p,q)$ so that, for every left centralizer $\Phi:S^p_0\To S^q$, there is a morphism of left modules $\phi:S^p_0\To S^q$ satisfying
$$
\|\Phi(f)-\phi(f)\|_q\leq KL(\Phi)\|f\|_p \quad\quad(f\in S^p_0),
$$
where $L(\Phi)$ is the left centralizer constant of $\Phi$.
\end{theorem}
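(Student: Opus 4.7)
By Lemma~\ref{local}, it suffices to exhibit a constant $K=K(p,q)$ with $\dist(\Phi_e,\mathscr M_B(S^p,S^q))\leq KL(\Phi)$ for every finite-rank projection $e\in B$, \emph{independently of $e$}. Under the classification $\Hom_B(S^p,S^q)=S^r$ with $r^{-1}=q^{-1}-p^{-1}$ recalled in (\ref{eq:HomSpSq}) (and its Lemma~\ref{mor}(c) extension to possibly unbounded right multipliers, which is the ambient class of $\mathscr M_B$), the task reduces to producing, for each $e$, an element $g_e$ satisfying $\|\Phi(fe)-fg_e\|_q\leq KL(\Phi)\|f\|_p$ for every $f\in S^p$. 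The obvious candidate $g_e=\Phi(e)$ only gives the distance bound $L(\Phi)\rk(e)^{1/p}$ discussed just before Lemma~\ref{local}, which blows up with $\rk(e)$, so a more global mechanism is needed.

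To build that mechanism I would pass to ultrapowers. Fix a free ultrafilter $\mathscr U$ on $\mathbb N$ and invoke Raynaud's theorem to identify $(S^p)_\mathscr U$ isometrically with the noncommutative $L^p$-space $L^p(\mathcal N,\tau)$ of some semifinite von Neumann algebra $\mathcal N$ containing $B$ diagonally, and likewise $(S^q)_\mathscr U=L^q(\mathcal N,\tau)$. The centralizer $\Phi$ lifts coordinate-wise to a homogeneous map $\Phi_\mathscr U$ on the ultrapower whose $B$-commutator $[\Phi_\mathscr U,a]\colon[f_n]\mapsto[\Phi(af_n)-a\Phi(f_n)]$ remains $L(\Phi)\|a\|_B$-bounded into $L^q(\mathcal N,\tau)$. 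The key point is that $\mathcal N$ carries a much richer lattice of commuting projections than $B$ does, so one can localise $\Phi_\mathscr U$ on spectral slices of auxiliary positive elements of $\mathcal N$ and then average against $\tau$; this is essentially the classical argument showing that any module map, or here any approximate module map, $L^p(\mathcal N,\tau)\to L^q(\mathcal N,\tau)$ with $q<p$ must agree, modulo a bounded perturbation, with right multiplication by a single element $\widetilde g\in L^r(\mathcal N,\tau)$ of controlled norm. Cutting $\widetilde g$ by the diagonal copy of $e$ and extracting a WOT-ultralimit in the style of the proof of Lemma~\ref{local} yields $g_e$ in the correct operator class with the uniform bound required.

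The principal obstacles I anticipate are twofold. First, Raynaud's representation is cleanest in the Banach range $p,q\geq 1$, whereas Theorem~\ref{qlessp} is stated for all $0<q<p<\infty$; handling the quasi-Banach case will need either an interpolation or duality detour or a direct quasi-Banach verification of the relevant ingredients, leaning on the quasilinearity guaranteed by Lemma~\ref{lem:spare} so that $\Phi$ stays usable on bounded nets in $S^p_0$. Second, the averaging-against-$\tau$ step requires spectral projections of finite trace, which is delicate because the ambient $\mathcal N$ is typically non-$\sigma$-finite, so the localisation must be arranged with finite-trace projections from the outset; special care is also needed to ensure the extracted $\widetilde g$ gives a bound depending only on $p,q$ and $L(\Phi)$, not on $\mathscr U$ or on ancillary choices. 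Once these technicalities are in place, Lemma~\ref{local} assembles the uniformly close local morphisms $\phi_e\colon f\mapsto fg_e$ into a single morphism $\phi\in\mathscr M_B(S^p_0,S^q)$ at distance at most $KL(\Phi)$ from $\Phi$, completing the proof.
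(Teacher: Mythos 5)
Your plan shares the paper's two main ingredients -- Lemma~\ref{local} for localisation and Raynaud's representation of the ultrapowers -- but the heart of your argument has a gap, and the paper goes around that gap in a genuinely different way.

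The step you describe as the ``key point'' (localise $\Phi_{\mathscr U}$ on spectral slices of $\mathcal N$, average against $\tau$, and conclude that an \emph{approximate} module map $L^p(\mathcal N)\To L^q(\mathcal N)$ with $q<p$ is a bounded perturbation of right multiplication by some $\widetilde g\in L^r(\mathcal N)$) is not a classical fact: what Junge and Sherman classify are \emph{exact} module homomorphisms. The approximate version is a centralizer-triviality statement over $\mathcal N$, which is the very content of Theorem~\ref{qlessp} in greater generality and at least as hard as the original problem; asserting it as ``essentially the classical argument'' is where the proof becomes circular rather than complete. There is also a more technical flaw before one even reaches that step: a general left centralizer $\Phi$ is neither linear nor bounded, so the coordinatewise lift $\Phi_{\mathscr U}\bigl[(f_n)\bigr]=\bigl[(\Phi(f_n))\bigr]$ is not well defined on the ultrapower. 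Quasilinearity (Lemma~\ref{lem:spare}) controls $\Phi(f_n)-\Phi(g_n)-\Phi(f_n-g_n)$, but nothing makes $\Phi(f_n-g_n)$ small in $S^q$ when $\|f_n-g_n\|_p\To 0$.

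The paper avoids both problems by a normalisation-and-contradiction device. Assuming no uniform constant $K$ exists, one produces centralizers $\Phi_n$ with $L(\Phi_n)\leq 1$ whose distance $\delta_n$ to $\mathscr M_B(S^p,S^q)$ blows up; after subtracting nearly optimal morphisms and dividing by $\delta_n$, the resulting maps $v_n$ are \emph{bounded} with norm $\leq 1$, while $L(v_n)\To 0$ and $Q(v_n)\To 0$. Because the $v_n$ are bounded, the ultrapower map $v\bigl[(f_n)\bigr]=\bigl[(v_n(f_n))\bigr]$ is well defined, and because the defect vanishes in the limit it is an \emph{exact} $B_{\mathscr U}$-module homomorphism. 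That exact homomorphism is then classified by Lemma~\ref{ultra} (Raynaud plus Junge--Sherman), giving a bounded sequence $(u_n)$ in $S^r$ approximating $v_n$ and contradicting the near-optimality of the chosen morphisms. In short: the paper does not attempt to prove a stability theorem for approximate module maps on the ultrapower; it engineers the ultralimit so that only the established classification of exact module maps is needed. If you want to rescue your route, you would have to actually prove the approximate-maps stability statement over $\mathcal N$, and it is not clear that your ``spectral slices and averaging'' sketch can be turned into a proof, particularly since Raynaud's $\mathcal N$ is a general von Neumann algebra given via Haagerup spaces, not a semifinite $(\mathcal N,\tau)$ with friendly finite-trace projections. (Your worry about the quasi-Banach range is, by contrast, unfounded: Raynaud's theorem covers all $0<p<\infty$.)
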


The proof combines a simple ultraproduct technique and some ``noncommutative gadgetry''.
Here we only recall some definitions, mainly for notational purposes.

Let $X$ be a quasi Banach space, $I$ an index set and $\mathscr U$ a countably incomplete ultrafilter on $I$. Let $\ell^\infty(I,X)$ be the space of bounded families of $X$ indexed by $I$ (furnished with the sup quasinorm) and
let $N_\mathscr U$ be the (closed) subspace of those $x\in\ell^\infty(I,X)$ such that $\|x_i\|_X\To 0$ along $\mathscr U$. The ultrapower of $X$ with respect to $\mathscr U$ is the quotient space $\ell^\infty(I,X)/N_\mathscr U$ with the quotient quasinorm. The class of the family $(x_i)$ in $X_\mathscr U$ is denoted by $[(x_i)]$.  Notice that if the quasinorm of $X$ is continuous one can compute the quasinorm in $X_\mathscr U$ by the formula
$
\|[(x_i)]\|=\lim_\mathscr U \|x_i\|_X
$.
Clearly, if $A$ is a Banach algebra, then so is $A_\mathscr U$ when equipped with the coordinatewise product $
[(a_i)][(b_i)]=[(a_ib_i)]$. If besides $X$ is a quasi Banach module over $A$, then the multiplication $[(a_i)][(x_i)]=[(a_ix_i)]$ makes $X_\mathscr U$ into a quasi Banach module over $A_\mathscr U$.

What we need to prove Theorem~\ref{qlessp} is the following.

\begin{lemma}\label{ultra}
Let $p,q,r\in (0,\infty)$ satisfy $q^{-1}=p^{-1}+r^{-1}$. If $\gamma:S^p_\mathscr U\To S^q_\mathscr U$ is a homomorphism of left modules over $B_\mathscr U$, then there is bounded family $(g_i)$ in $S^r$ such  that
$\gamma[(f_i)]=[(f_ig_i)]$ whenever $(f_i)$ is bounded in $S^p$.
\end{lemma}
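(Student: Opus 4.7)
The plan is to use Raynaud's representation of Schatten-class ultrapowers as noncommutative $L^p$-spaces in order to reduce the statement to a homomorphism computation over a single von Neumann algebra.

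First, Raynaud's theorem provides a semifinite von Neumann algebra $\mathcal N$ (the Groh-Raynaud ultraproduct of $(B(\Hi),\tr)$ along $\mathscr U$) equipped with a normal semifinite faithful trace $\tilde\tau$, together with isometric identifications $S^s_\mathscr U\cong L^s(\mathcal N,\tilde\tau)$ for every $s\in(0,\infty)$, sending a class $[(f_i)]$ to the natural image of $(f_i)$ in $L^s(\mathcal N)$. These identifications are compatible with all multiplications: the coordinatewise product $[(f_i)][(g_i)]=[(f_ig_i)]$ corresponds to the H\"older product in the $L^p$-calculus of $\mathcal N$, and left multiplication by $B_\mathscr U$ is realized via an embedding $B_\mathscr U\hookrightarrow\mathcal N$ as the restriction of the canonical $\mathcal N$-module action on $L^p(\mathcal N)$.

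Next, I would bootstrap the $B_\mathscr U$-linearity of $\gamma$ to full $\mathcal N$-linearity. For each $n\in\mathcal N$, Kaplansky's density theorem produces a bounded net $(b_\alpha)$ in $B_\mathscr U$ converging to $n$ in the strong operator topology of $\mathcal N$; since the module action $\mathcal N\times L^p(\mathcal N)\to L^p(\mathcal N)$ is jointly continuous for the strong operator topology on $\mathcal N$ and the norm on $L^p(\mathcal N)$ when restricted to bounded sets (a standard fact reducible to dominated convergence via polar decomposition and functional calculus), one has $b_\alpha f\to nf$ in $L^p(\mathcal N)$ for every $f\in L^p(\mathcal N)$. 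The norm continuity of $\gamma$ then gives $\gamma(nf)=\lim\gamma(b_\alpha f)=\lim b_\alpha\gamma(f)=n\gamma(f)$, so $\gamma$ is $\mathcal N$-linear.

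Finally, with $\gamma$ $\mathcal N$-linear, I would invoke the noncommutative analogue of~(\ref{eq:HomSpSq}) due (in the required generality) to Junge and Sherman \cite{js}: every left-$\mathcal N$-module homomorphism $L^p(\mathcal N)\to L^q(\mathcal N)$ is right multiplication by a unique $G\in L^r(\mathcal N)$, and $\|G\|_{L^r(\mathcal N)}=\|\gamma\|$. Translating $G$ back through Raynaud's identifications, $G=[(g_i)]$ for some bounded family $(g_i)$ in $S^r$, and the identity $\gamma(f)=fG$ in $L^q(\mathcal N)$ unwinds to $\gamma[(f_i)]=[(f_ig_i)]$ in $S^q_\mathscr U$, as required. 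The hardest point will be the promotion from $B_\mathscr U$- to $\mathcal N$-linearity in the middle step: it depends on the strong-operator-topology density of $B_\mathscr U$ inside $\mathcal N$ (a byproduct of the Raynaud construction) and on the appropriate continuity of the $L^p$-module action, which in the quasi-Banach regime $p<1$ demands some extra care.
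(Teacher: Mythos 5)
Your proposal is correct and follows essentially the same route as the paper: Raynaud's representation of the ultrapowers as $L^p$ spaces over a single von Neumann algebra $\mathcal N$, promotion of $B_\mathscr U$-linearity to $\mathcal N$-linearity via strong-operator density of $B_\mathscr U$ in $\mathcal N$ together with the $p$-independence (on bounded sets) of the topology induced by the module action, and then the Junge--Sherman description of module homomorphisms as right multiplications. The one caveat is that the ultraproduct algebra $\mathcal N$ need not be semifinite, so one cannot assume a trace $\tilde\tau$; this is exactly why the paper routes the whole argument through Haagerup $L^p$ spaces, but it does not change the structure of your proof.
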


\begin{proof}
This can be obtained as a combination of results by Raynaud, and Junge and Sherman. Let us explain how.

(1) There is a general construction, due to Haagerup, that associates to a given von Neumann algebra $\mathcal M$ the so-called (Haagerup, non-commutative) $L^p$ spaces $L^p(\mathcal M)$ for $0<p\leq\infty$. These spaces consist of certain (densely defined, closable, but in general discontinuous) operators acting on a common suitable Hilbert space which is related to $\mathcal M$ in a highly nontrivial way and $\mathcal M$ itself can be identified with $L^\infty(\mathcal M)$, as von Neumann algebras.
As it happens this provides the following generalization of H\"older inequality: suppose $p,q,r\in(0,\infty]$ are such that $q^{-1}=p^{-1}+r^{-1}$; if $f\in L^p(\mathcal M)$ and $g\in L^r(\mathcal M)$, then $fg\in L^q(\mathcal M)$ and
$\|fg\|_q\leq \|f\|_p\|g\|_r$, where the subscript indicates the quasinorm of the corresponding Haagerup space. Letting $p=\infty$ or $r=\infty$ one gets the module structures over $L^\infty(\mathcal M)$. See \cite{haa,pixu}.

(2) After that it is clear that that every $g\in L^r(\mathcal M)$ gives rise to a homomorphism (of left  $L^\infty(\mathcal M)$-modules) $\gamma:L^p(\mathcal M)\To L^q(\mathcal M)$ by multiplication: $\gamma(f)=fg$. Moreover, $\|\gamma:L^p(\mathcal M)\To L^q(\mathcal M)\|=\|g\|_r$.
Junge and Sherman proved in \cite[Theorem 2.5]{js} that all such homomorphisms arise in this way, which is crucial for us.

(3) The Haagerup spaces do not form any ``scale''. Indeed, by the very definition, one has $
L^p(\mathcal M)\cap L^q(\mathcal M)=0$ unless $p=q$. In particular, $L^p(B)$ (the Haagerup $L^p$ space corresponding to the choice $\mathcal M=B$) cannot be the same as `our' $S^p$. 
Nevertheless there is a system of isometric bimodule isomorphisms $\iota_p:S^p\To L^p(B)$ which are compatible with the product maps in the sense that
$\iota_q(fg)=\iota_p(f)\iota_r(g)$ whenever $f\in S^p$ and $g\in S^q$ with $
q^{-1}=p^{-1}+r^{-1}$.

The obvious consequence of this is that a map $u:S^p\To S^q$ is a homomorphisms of $B$-modules if and only if $\iota_q\circ u\circ \iota_p^{-1}:L^p(B)\To L^q(B)$ is a homomorphism of $L^\infty(B)$-modules.
Therefore replacing Schatten classes by Haagerup spaces and $B$ by $L^\infty(B)$ does not alter the Lemma.

 (4) Raynaud proved in \cite{raynaud} that given a von Neumann algebra $\mathcal M$ and a countably incomplete ultrafilter $\mathscr U$ one can represent the ultrapowers of the whole family of Haagerup spaces $L^p(\mathcal M)$ (for finite $p$) as the Haagerup spaces associated to some von Neumann algebra independent on $p$. Precisely: there is a von Neumann algebra $\mathcal N$ containing $L^\infty(\mathcal M)_\mathscr U$ and a system of surjective isometries $\kappa_p:L^p(\mathcal M)_\mathscr U\To L^p(\mathcal N)$ for $0<p<\infty$ compatible with the product maps in the following sense:  $p,q,r\in(0,\infty)$ are such that $q^{-1}=p^{-1}+r^{-1}$ and $(f_i)$ and $(g_i)$ are bounded families in $L^p(\mathcal M)$ and $L^r(\mathcal M)$, respectively, then
$$
(\kappa_p[(f_i)])(\kappa_r[(g_i)])= \kappa_q[(f_ig_i)],
$$
where the product in the left-hand side refers to spaces over $\mathcal N$ and those in the right-hand side to $\mathcal M$.

(5) Therefore we can regard $L^p(\mathcal M)_\mathscr U$ as a module over $\mathcal N$ and every homomorphism of $\mathcal N$-modules $\gamma:L^p(\mathcal M)_\mathscr U\To L^q(\mathcal M)_\mathscr U$ can be represented as $\gamma[(f_i)]=[(f_ig_i)]$, where $(g_i)$ is a bounded family
in $L^r(\mathcal M)$.

 (6) The proof of the Lemma will be complete if we show that every homomorphism of $L^\infty(\mathcal M)_\mathscr U$-modules $\gamma:L^p(\mathcal M)_\mathscr U\To L^q(\mathcal M)_\mathscr U$ is automatically a homomorphism of $\mathcal N$-modules. And this is so because on one hand $L^\infty(\mathcal M)_\mathscr U$ is dense in $\mathcal N$ in the strong operator topology induced by the (module) action on $L^2(\mathcal N)$ and, on the other hand, the restriction to bounded subsets of $\mathcal N$ of the strong operator topology induced by the action on $L^p(\mathcal N)$   does not depend on $0<p<\infty$ (see \cite[Lemma~2.3]{js}).
\end{proof}


\begin{proof}[Proof of Theorem~\ref{qlessp}]
Assume on the contrary that there is a sequence of centralizers $\Phi_n:S^p_0\To S^q$ such that $L(\Phi_n)\leq 1$ and $\dist(\Phi_n, \mathscr M_B(S^p_0,S^q))\To\infty$.
In view of Lemma~\ref{local} we may assume that for each $n$ there is a finite-rank projection $e_n\in B$ such that $\Phi_n(f)=\Phi_n(fe_n)$ for all $f\in S^p_0$. Thus there is no loss of generality if we assume that each $\Phi_n$ is defined on the whole of $S^p$ and also that
$\dist(\Phi_n,\mathscr M_B(S^p,S^q))$ is finite for every $n$.

For each $n$ we take a morphism $\phi_n:S^p\To S^q$ such that $$
\delta_n=\dist(\Phi_n,\phi_n)\leq \dist(\Phi_n,
\mathscr M_B(S^p,S^q))+1/n. $$ Of course, $\delta_n\To\infty$ as
$n\To\infty$. Put $$ v_n=\frac{\Phi_n-\phi_n}{\delta_n}, $$ so that $v_n:S^p\To S^q$ is a bounded homogeneous mapping with
$\|v_n:S^p\To S^q\|\leq 1$ and $L(v_n)\leq L(\Phi_n)/\delta_n\To 0$ as
$n\To\infty$. By Lemma~\ref{lem:spare} we also have $Q(v_n)\To0$.

Let $\mathscr U$ be a free ultrafilter on the integers and consider the
corresponding ultrapowers $ S^p_{\mathscr U}$ and $ S^q_{\mathscr U}$. We can use
the (probably nonlinear) maps $v_n$ to define
$v: S^p_{\mathscr U}\To S^q_{\mathscr U}$ by $$ v[(f_n)]=[(v_n(f_n))]. $$ Let us
check that $v$ is well defined. First, suppose $[(f_n)]=0$, that
is, $\|f_n\|_p\To 0$ along $\mathscr U$. As $\|v_n(f_n)\|_q\leq \|f_n\|_p$ we have $[(v_n(f_n))]=0$. Suppose now $[(f_n)]=[(g_n)]$. We must prove that
$[(v_n(f_n))]=[(v_n(g_n))]$. But
$$ \lim_\mathscr U\|v_n(f_n)-v_n(g_n)\|_q=
\lim_\mathscr U\|v_n(f_n)-v_n(g_n)-v_n(f_n-g_n)\|_q\leq \lim_\mathscr U
Q(v_n)\big{(}\|g_n\|_p+\|f_n-g_n\|_p\big{)}=0 $$ and the definition of $v$ makes
sense. Now it is nearly obvious that $v$ is a continuous
homomorphism of $B_\mathscr U$-modules. By Lemma~\ref{ultra} there is a bounded sequence $(u_n)$ in $
S^r$ representing  $v$ in the sense that $
v[(f_n)]=[(f_nu_n)]$
whenever $(f_n)$ is a bounded sequence in $S^p$, where $r^{-1}+p^{-1}=q^{-1}$.  This implies that $\dist(v_n,u_n)\To0$
along $\mathscr U$. In particular, for every $\e>0$, the set $ S=\{n\in\N:
0<\dist((\Phi_n-\phi_n)/\delta_n, u_n)<\e\} $ belongs to $\mathscr U$ and it
contains infinitely many indices $n$. For these $n$ we get $$
\dist(\Phi_n,\phi_n+\delta_nu_n)<\e\delta_n< 2\e\dist(\Phi_n,
\mathscr M_B(S^p,S^q)), $$ in striking contradiction with our choice of $\phi_n$.
\end{proof}

\begin{corollary}\label{coro}
If\: $0<q<p\leq\infty$, then $\Ext_B(S^p,S^q)=0$.
\end{corollary}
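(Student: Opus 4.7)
The plan is to combine the centralizer/extension dictionary of Corollary~\ref{all} with the triviality result Theorem~\ref{qlessp}, supplemented by Lemma~\ref{local} to cover the endpoint $p=\infty$.

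For $0<q<p<\infty$ I would first invoke Corollary~\ref{all}: every extension $0\to S^q\to Z\to S^p\to 0$ of left $B$-modules is equivalent to the one induced by some left centralizer $\Phi:S^p_0\to S^q$. Theorem~\ref{qlessp} then furnishes a morphism $\phi\in\mathscr M_B(S^p_0,S^q)$ with $\|\Phi(f)-\phi(f)\|_q\leq KL(\Phi)\|f\|_p$ for every $f\in S^p_0$, which is precisely the statement that $\Phi$ is a trivial centralizer. Hence the induced extension splits and $\Ext_B(S^p,S^q)=0$.

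The case $p=\infty$, where $S^\infty$ consists of the compact operators and $S^\infty_0=\mathfrak{F}$ carries the operator norm, is not covered by Theorem~\ref{qlessp} and needs a separate observation. Corollary~\ref{all} again reduces matters to showing that every left centralizer $\Phi:\mathfrak{F}\to S^q$ is trivial. For any finite-rank projection $e\in B$, the localization $\Phi_e(f)=\Phi(fe)$ satisfies
$$
\|\Phi_e(f)-f\Phi(e)\|_q\leq L(\Phi)\|f\|_B\|e\|_B=L(\Phi)\|f\|_B,
$$
and this estimate is uniform in $e$: the nuisance factor $\rk(e)^{1/p}$ that appears for finite $p$ collapses to $1$ at $p=\infty$. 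Applying Lemma~\ref{local}, whose proof uses only $q<\infty$ and goes through verbatim when $p=\infty$, one concludes $\dist(\Phi,\mathscr M_B(\mathfrak{F},S^q))\leq L(\Phi)<\infty$, and so $\Phi$ is trivial.

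The only genuine obstacle is the endpoint; the finite-$p$ case is packaged entirely inside Theorem~\ref{qlessp}, while at $p=\infty$ the key insight is that the $\rk(e)^{1/p}$ factor becomes harmless, so no further ultrapower machinery is required beyond the weak-operator-topology ultralimit built into the proof of Lemma~\ref{local}.
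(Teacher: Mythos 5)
Your proof is correct and follows exactly the route the paper takes: Corollary~\ref{all} plus Theorem~\ref{qlessp} for finite $p$, and Lemma~\ref{local} for $p=\infty$, where the localization estimate $\|\Phi(fe)-f\Phi(e)\|_q\leq L(\Phi)\|f\|_B\|e\|_\infty$ is uniform in $e$ because $\|e\|_\infty=1$. The paper's own proof is a one-line citation of these same three ingredients; you have merely (and correctly) spelled out the endpoint case.
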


\begin{proof}
Corollary~\ref{all} and Theorem~\ref{qlessp}. For $p=\infty$ use Lemma~\ref{local}.
\end{proof}

\section{Isomorphisms of spaces of centralizers}\label{sec:isom}
Once we know that $\Ext_B(-,-)$ vanishes at certain couples $(S^s, S^r)$  we can use the functor $\Hom_B(-,-)$, fixing one of the arguments, to compare different spaces of extensions. This can be done either working directly with the extensions or using the corresponding centralizers. In the next two Sections we focus on centralizers; those readers acquainted with Yoneda's approach to $\Ext$ can see the linear counterpart in Section~\ref{sec:counterpart}. At the end of the day, the space $\Ext_B(S^p,S^q)$ depends only on the difference $q^{-1}-p^{-1}$, just as it happens to the space of homomorphisms; see \ref{eq:HomSpSq}.

\subsection{Two covariant transformations}\label{sec:cova}
Let us beging with the ``covariant'' case, corresponding to $\Hom(S^s,-)$.
To take advantage of the extra simplification provided by Lemma~\ref{mor}(b) we shall work with right centralizers. We have included a statement about bicentralizers, as well as the case $q_1<p_1$, so that we can use them in Section~\ref{sec:bicentralizers}.

So, given $p,q\in(0,\infty]$ we write $\mathscr C(S^p_0,S^p)_B$ for the space of right centralizers $\Phi: S^p_0\To S^q$. We denote by ${\mathscr C}(S^p_0,S^q)_B^\sim$ (respectively, $\mathscr C(S^p_0,S^p)_B^\approx$) the quotient by the subspace of trivial (respectively, bounded) centralizers.

\begin{proposition}\label{prop:cova1}Let $0<p_1,q_1<\infty$ and $s>q_1$.
We define $(p_2,q_2)$ by $p_2^{-1}+s^{-1}=p_1^{-1}$ and $q_2^{-1}+s^{-1}=q_1^{-1}$. Then, for each\: $\Phi\in \mathscr C(S^{p_1}_0,S^{q_1})_B$, there is  $\Phi^{(s)}\in \mathscr C(S^{p_2}_0,S^{q_2})_B$ such that
\begin{equation}\label{eq:cova1}
\|\Phi(gf)- (\Phi^{(s)} g)f\|_{q_1}\leq M\|g\|_{p_2}\|f\|_s\quad\quad(f,g\in\mathfrak F).
\end{equation}
Moreover:\begin{itemize}
\item[(a)] Any two centralizers satisfying the preceding estimate are strongly equivalent.
\item[(b)] If\: $\Phi$ is a bicentralizer, then so is\: $\Phi^{(s)}$.
\end{itemize}
\end{proposition}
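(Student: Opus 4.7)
The idea is to use Theorem~\ref{qlessp} (in its right-module form) as a ``derivative extractor'': for each $g\in\mathfrak F$ the partial map $f\mapsto\Phi(gf)$ is a right centralizer from $S^s_0$ to $S^{q_1}$, and since $s>q_1$ it lies a bounded distance from a right morphism; this morphism, represented as left multiplication by a unique element of $S^{q_2}$, will be declared to be $\Phi^{(s)}(g)$.

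Concretely, fix $g\in\mathfrak F=S^{p_2}_0$ and set $\psi_g(f)=\Phi(gf)$. The noncommutative H\"older inequality $\|gf\|_{p_1}\leq\|g\|_{p_2}\|f\|_s$ combined with the right-centralizer bound for $\Phi$ yields
\begin{equation*}
\|\psi_g(fb)-\psi_g(f)b\|_{q_1}\leq R(\Phi)\|g\|_{p_2}\|f\|_s\|b\|_B,
\end{equation*}
so $\psi_g\in\mathscr C(S^s_0,S^{q_1})_B$ with constant $\leq R(\Phi)\|g\|_{p_2}$. Theorem~\ref{qlessp} (right-module version) then provides a right morphism $\phi_g:S^s_0\to S^{q_1}$ with $\|\psi_g(f)-\phi_g(f)\|_{q_1}\leq KR(\Phi)\|g\|_{p_2}\|f\|_s$. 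Extending $\phi_g$ continuously to $S^s$ and invoking the right-module analogue of (\ref{eq:HomSpSq}), I write $\phi_g(f)=h_g f$ for a unique $h_g\in S^{q_2}$ (the exponent fits since $s^{-1}+q_2^{-1}=q_1^{-1}$), and set $\Phi^{(s)}(g):=h_g$. Since $\psi_{tg}=t\psi_g$ for $t\in\mathbb C$, one can choose $\phi_{tg}=t\phi_g$, so $\Phi^{(s)}$ inherits homogeneity from $\Phi$, and (\ref{eq:cova1}) is then built into the construction.

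The remaining assertions all rest on the noncommutative duality
\begin{equation*}
\|h\|_{q_2}=\sup\bigl\{\|hf\|_{q_1}:f\in\mathfrak F,\,\|f\|_s\leq 1\bigr\}\qquad(h\in S^{q_2}),
\end{equation*}
obtained by writing $h=u|h|$ and saturating H\"older with $f=|h|^{q_2/s}u^*$ (truncated to finite rank). For (a), if $\Psi_1,\Psi_2$ both satisfy (\ref{eq:cova1}) with constant $M$ then $\|(\Psi_1(g)-\Psi_2(g))f\|_{q_1}\leq 2M\|g\|_{p_2}\|f\|_s$ for every $f\in\mathfrak F$, and duality immediately gives $\|\Psi_1(g)-\Psi_2(g)\|_{q_2}\leq 2M\|g\|_{p_2}$. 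To check that $\Phi^{(s)}$ is itself a right centralizer, I apply (\ref{eq:cova1}) to $(gb,f)$ and to $(g,bf)$---both bounds on $\Phi(gbf)$---and subtract, obtaining $\|(\Phi^{(s)}(gb)-\Phi^{(s)}(g)b)f\|_{q_1}\leq 2M\|g\|_{p_2}\|b\|_B\|f\|_s$, which duality closes. For (b), the extra bound $\|\Phi(agf)-a\Phi(gf)\|_{q_1}\leq L(\Phi)\|a\|_B\|g\|_{p_2}\|f\|_s$ coming from the left-centralizer property of $\Phi$ bridges (\ref{eq:cova1}) applied to $(ag,f)$ with $a$ times the same applied to $(g,f)$, producing $\|(\Phi^{(s)}(ag)-a\Phi^{(s)}(g))f\|_{q_1}\leq M'\|a\|_B\|g\|_{p_2}\|f\|_s$ and, by duality, the left-centralizer estimate for $\Phi^{(s)}$.

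The main obstacle is essentially organizational: one must juggle the five parameters $p_1,p_2,q_1,q_2,s$ under H\"older and verify the duality formula in the genuinely quasi Banach range $q_2<1$, where Banach duality is unavailable and one must argue directly through the singular-value calculus above and the density of $\mathfrak F$ in $S^s$.
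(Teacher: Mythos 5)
Your overall strategy is exactly the paper's: apply Theorem~\ref{qlessp} to the partial maps $f\mapsto\Phi(gf)$ to manufacture $\Phi^{(s)}(g)$, and then run everything else (the centralizer estimate, (a) and (b)) through the duality formula $\|h\|_{q_2}=\sup\{\|hf\|_{q_1}:f\in\mathfrak F,\ \|f\|_s\leq 1\}$. Those parts are fine, and your verification of (a), of the right-centralizer property via the two bounds on $\Phi(gbf)$, and of (b) via the extra left-centralizer estimate coincide with the published argument.

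There is, however, one step that does not work as written: ``Extending $\phi_g$ continuously to $S^s$ and invoking the right-module analogue of (\ref{eq:HomSpSq}), I write $\phi_g(f)=h_gf$ for a unique $h_g\in S^{q_2}$.'' The morphism $\phi_g$ produced by Theorem~\ref{qlessp} is only an algebraic morphism of right modules on $\mathfrak F$; by Lemma~\ref{mor}(b) it has the form $\phi_g(f)=\ell_g\circ f$ for some $\ell_g\in L(\mathscr H)$ which need not be bounded, let alone lie in $S^{q_2}$. Continuity of $\phi_g$ cannot be read off from the estimate $\|\psi_g(f)-\phi_g(f)\|_{q_1}\leq KR(\Phi)\|g\|_{p_2}\|f\|_s$, because $\psi_g(f)=\Phi(gf)$ is itself unbounded in general; for the same reason your duality formula does not directly yield $\ell_g\in S^{q_2}$, since $\sup_{\|f\|_s\leq1}\|\ell_gf\|_{q_1}$ is not controlled. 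So (\ref{eq:HomSpSq}) is not applicable at this point. The paper closes this hole in two moves: first it shows that \emph{any} homogeneous $\Gamma$ satisfying (\ref{eq:cova1}) automatically has $\Gamma(ga)-(\Gamma g)a\in S^{q_2}$ with the right bound (your subtraction argument, which only needs the differences, not the values, to be in $S^{q_2}$); then, for $g\in\mathfrak F$, it writes $g=ge$ with $e$ the initial projection of $g$, so that $\Phi^{(s)}(g)-(\Phi^{(s)}g)e\in S^{q_2}$ by the previous step while $(\Phi^{(s)}g)e=\ell_g\circ e$ is a continuous finite-rank operator; hence $\Phi^{(s)}(g)\in S^{q_2}$. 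You should either reproduce this finite-rank localization or supply some other argument that places $\ell_g$ in $S^{q_2}$; as it stands, the assertion that $\Phi^{(s)}$ takes values in $S^{q_2}$ is unproved.
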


\begin{proof}
Our choice of the parameters guarantees that $\Hom(S^s, S^{p_1})_B=S^{p_2}$ and $ \Hom(S^s, S^{p_1})_B=S^{p_2}$.

Let us first show that every homogeneous mapping $\Gamma:  S^{p_2}\To L(\mathscr H)$ satisfying an estimate
$$
\|\Phi(gf)- (\Gamma g)f\|_{q_1}\leq M\|g\|_{p_2}\|f\|_s\quad\quad(f\in\mathfrak F)
$$
is a right centralizer from $S^{p_2}$ to $S^{q_2}$. Pick $g\in S^{p_2}, a\in B$ and let us compare $\Gamma(ga)$ with  $(\Gamma g)a$. One has
$$
\|\Phi((ga)f)- (\Gamma (ga))f\|_{q_1}\leq M\|ga\|_{p_2}\|f\|_s\quad\text{and}\quad
\|(\Gamma g)af- \Phi(g(af)) \|_{q_1}\leq M\|g\|_{p_2}\|af\|_s,
$$
hence
$$
\|(\Phi(ga)- (\Gamma g)a))f\|_{q_1}\leq M\|g\|_{p_2}\|a\|_B\|f\|_s\quad\quad(f\in\frak F),
$$
which is enough since $\|h\|_{q_2}=\sup\left\{\|gf\|_{q_1}: f\in \mathfrak F, \|f\|_s\leq 1\right\}$. A similar argument shows that any two centralizers fitting in  (\ref{eq:cova1}) are strongly equivalent, which gives (a).

Let us check (b) right now. Assuming that $\Phi$ is in addition a left centralizer we have
$$
\|(\Phi(agf)- a\Phi(gf)\|_{q_1}\leq M\|a\|_B\|gf\|_{p_1}.
$$
Also,
$$
\|\Phi(agf)- (\Gamma(ag))f\|_{q_1}\leq M\|ag\|_{p_2}\|f\|_{s}\quad\text{and}\quad
\|a\Phi(gf)- a(\Gamma g)f\|_{q_1}\leq M\|a\|_B\|g\|_{p_2}\|f\|_{s}.
$$
Combining,
$$
\|\Gamma(ag)f- a(\Gamma g)f\|_{q_1}\leq M\|a\|_B\|g\|_{p_2}\|f\|_s,
$$
which yields (b).

The rest is straightforward from Theorem~\ref{qlessp}.
Take $\Phi\in \mathscr C(S^{p_1}_0,S^{q_1})_B$ and $g\in S^{p_2}$. The composition
$f\in S^s_0\longmapsto \Phi(gf)\in S^{q_1}$ is a right centralizer, with constant at most
 $R(\Phi) \|g\|_{p_2}$.
 As $q_1<s$, Theorem~\ref{qlessp} provides a linear map $\ell\in L(\mathscr H)$, depending on $g$, such that
 $\|\Phi(gf)-\ell\circ(f)\|_{p_2}\leq K \|g\|_{p_2}R(\Phi) \|f\|_{s}$, where $K=K(s,q_1)$. Selecting homogeneously such an $\ell$ gives the desired centralizer $\Phi^{(s)}$.
 
The restriction of  $\Phi^{(s)}$ to $\mathfrak F$ takes values in $S^{q_2}$: pick $g\in\frak F$ and let $e$ be the initial projection of $g$ so that $g=ge$. Then $\Phi^{(s)}g-(\Phi^{(s)}g)e$ belongs to $S^{q_2}$ and since $(\Phi^{(s)}g)e$ is continuous and has finite rank we see that $\Phi^{(s)}g\in S^{q_2}$.
\end{proof}

The following Proposition provides the ``inverse'' of the transformation defined in the preceding one. The notation aims to highlight this connection.


\begin{proposition}\label{version}
Let $\Psi:S^{p_2}_0\To S^{q_2}_{\:}$ be a right centralizer, with $0<p_2, q_2\leq \infty$. Take $0<s < \infty$ and let $p_1$ and $q_1$ be given by $p_1^{-1}= p_2^{-1}+ s^{-1}$ and $q_1^{-1}= q_2^{-1}+ s^{-1}$. We define a mapping $\Psi_{(s)}:S^{p_1}_0\To S^{q_1}$ by
 $$\Psi_{(s)}(h)=\Psi(u|h|^{p_1/p_2})|h|^{p_1/s},
 $$
 where $u$ is the phase of $h$.
 Then $\Psi_{(s)}$ is a right centralizer.
Moreover:\begin{itemize}
\item[(a)] $\Psi_{(s)}$ is bounded if and only if $\Psi$ is bounded.
\item[(b)] If\: $\Psi$ is a bicentralizer, then so is\: $\Psi_{(s)}$.
\end{itemize}
\end{proposition}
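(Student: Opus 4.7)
The plan hinges on the factorization $h = u|h|^{\rho}\cdot|h|^{\sigma}$, where $\rho = p_1/p_2$ and $\sigma = p_1/s$ satisfy $\rho+\sigma=1$. This renders $\Psi_{(s)}(h)$ literally equal to $\Psi(g)\,k$ with $g=u|h|^{\rho}\in S^{p_2}$ and $k=|h|^{\sigma}\in S^s$. Functional calculus on singular values yields $\|g\|_{p_2}=\|h\|_{p_1}^{\rho}$ and $\|k\|_s=\|h\|_{p_1}^{\sigma}$, while noncommutative H\"older matches the target exponent $q_2^{-1}+s^{-1}=q_1^{-1}$, so $\Psi_{(s)}(h)$ indeed lies in $S^{q_1}$ with the expected norm estimate whenever $\Psi$ is bounded.

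To prove the right centralizer property, fix $h\in S^{p_1}_0$ and $a\in B$, and write $ha=u_{ha}|ha|$ for the polar decomposition of $ha$. The key auxiliary operator is $c\in B$ defined by $c(|ha|^{\sigma}w)=|h|^{\sigma}aw$ on the range of $|ha|^{\sigma}$, and extended by $0$ on its orthogonal complement. The identity $\|\,|ha|w\|^2=\langle a^*|h|^2 aw,w\rangle=\|\,|h|aw\|^2$ gives $|ha|w=0\Rightarrow|h|^{\sigma}aw=0$, so $c$ is well-defined. The bound $\|c\|_B\le\|a\|_B^{\rho}$ comes from operator concavity of $t\mapsto t^{\sigma}$ (Davis--Choi): $a^*|h|^{2\sigma}a\le\|a\|_B^{2\rho}(a^*|h|^2 a)^{\sigma}=\|a\|_B^{2\rho}|ha|^{2\sigma}$. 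By construction $c\,|ha|^{\sigma}=|h|^{\sigma}a$, and the computation $u|h|^{\rho}c\,|ha|^{\sigma}=u|h|a=ha=u_{ha}|ha|^{\rho}|ha|^{\sigma}$, combined with the fact that both $u|h|^{\rho}c$ and $u_{ha}|ha|^{\rho}$ vanish on the kernel of $|ha|^{\sigma}$, yields the crucial identity $u|h|^{\rho}c=u_{ha}|ha|^{\rho}$.

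The centralizer property of $\Psi_{(s)}$ now flows from that of $\Psi$:
\begin{align*}
\Psi_{(s)}(h)\,a&=\Psi(g)(ka)=\Psi(g)\,c\,|ha|^{\sigma}=[\Psi(gc)-E]|ha|^{\sigma}\\
&=\Psi(u_{ha}|ha|^{\rho})\,|ha|^{\sigma}-E|ha|^{\sigma}=\Psi_{(s)}(ha)-E|ha|^{\sigma},
\end{align*}
where $E=\Psi(gc)-\Psi(g)c$ satisfies $\|E\|_{q_2}\le R(\Psi)\|g\|_{p_2}\|c\|_B\le R(\Psi)\|h\|_{p_1}^{\rho}\|a\|_B^{\rho}$. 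Noncommutative H\"older then yields $\|E\,|ha|^{\sigma}\|_{q_1}\le M\|h\|_{p_1}^{\rho}\|a\|^{\rho}\cdot\|h\|_{p_1}^{\sigma}\|a\|^{\sigma}=M\|h\|_{p_1}\|a\|_B$, as required.

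For (a), the forward direction follows from $\|\Psi(g)\|_{q_2}\le M\|g\|_{p_2}$ combined with H\"older. The converse and (b) both rest on the auxiliary estimate $\|\Psi_{(s)}(gf)-\Psi(g)f\|_{q_1}\le M\|g\|_{p_2}\|f\|_s$ for $g\in S^{p_2}_0,\,f\in S^s_0$, proved by a symmetric $c$-construction in which one defines $c'(|h|^{\sigma}w)=Qfw$ with $Q$ the projection onto $\overline{\mathrm{range}(g^*)}$ (handling $\ker g$), controls $\|c'\|$ by the same operator-concavity inequality, and verifies $gc'=u|h|^{\rho}$. Granted this, (a)-converse follows from $\|X\|_{q_2}=\sup_{\|f\|_s\le 1}\|Xf\|_{q_1}$, and (b) follows by applying the auxiliary estimate to the factorization $ah=(au|h|^{\rho})\cdot|h|^{\sigma}$ and then using the left centralizer property of $\Psi$ to pull $a$ out of $\Psi(au|h|^{\rho})$. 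The main technical obstacle I expect is verifying the norm bound for $c'$ in the auxiliary estimate, which requires careful handling of the projection $Q$.
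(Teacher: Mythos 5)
Your proof of the main assertion (that $\Psi_{(s)}$ is a right centralizer) is correct and genuinely different from the paper's. You construct an explicit intertwiner $c$ with $c|ha|^{\sigma}=|h|^{\sigma}a$ and $u|h|^{\rho}c=u_{ha}|ha|^{\rho}$, bound $\|c\|_B\leq\|a\|_B^{\rho}$ by Hansen's operator-concavity inequality $a^*T^{\sigma}a\leq\|a\|_B^{2\rho}(a^*Ta)^{\sigma}$, and then apply the right-centralizer property of $\Psi$ once. The paper instead first proves a general two-factorization inequality: if $f_1g_1=f_2g_2$ then $\|(\Psi f_1)g_1-(\Psi f_2)g_2\|_{q_1}\leq M(\|f_1\|_{p_2}\|g_1\|_s+\|f_2\|_{p_2}\|g_2\|_s)$, obtained by taking adjoints in Lemma~\ref{lem:spare} to write $f_i=fa_i$ with $f=(f_1f_1^*+f_2f_2^*)^{1/2}$ and contractions $a_i$ whose final projections match the support of $f$ (so that $a_1g_1=a_2g_2$), followed by a homogeneity/minimization step that converts a product bound into a sum bound. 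Your route is self-contained and sharp for the particular pair of factorizations it treats; the paper's inequality applies to \emph{arbitrary} pairs of factorizations of the same operator, and that extra flexibility is exactly what parts (a) and (b) consume.

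The gap is in the auxiliary estimate $\|\Psi_{(s)}(gf)-\Psi(g)f\|_{q_1}\leq M\|g\|_{p_2}\|f\|_s$, on which you rest the converse of (a) and all of (b). The operator $c'$ defined by $c'(|h|^{\sigma}w)=Qfw$ (with $h=gf$) is well defined and does satisfy $gc'=v|h|^{\rho}$, but its norm cannot be controlled by operator concavity: you would need $f^*Qf\leq M^2\,(f^*|g|^2f)^{\sigma}$, whereas Hansen's inequality only bounds $f^*|g|^{2\sigma}f$, which satisfies $f^*|g|^{2\sigma}f\leq\|g\|_B^{2\sigma}f^*Qf$ --- a \emph{lower} bound for $f^*Qf$, not an upper one. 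Concretely, take $g=\mathrm{diag}(1,\e)$ and $f$ the identity of that two-dimensional subspace: then $c'=|g|^{-\sigma}$ there, so $\|c'\|_B=\e^{-\sigma}\to\infty$ while $\|g\|_{p_2}\|f\|_s$ stays bounded, and the error term $R(\Psi)\|g\|_{p_2}\|c'\|_B\,\||h|^{\sigma}\|_s\sim\e^{-\sigma}$ is useless. (The auxiliary estimate itself is true; it is the paper's two-factorization inequality applied to $f_1=g$, $g_1=f$ and $f_2=v|gf|^{\rho}$, $g_2=|gf|^{\sigma}$.) To close the gap, either prove that inequality via the common-factor trick above, or produce a \emph{bounded} intertwiner adapted to two arbitrary factorizations; the $Q$-based $c'$ is not it. Once the two-factorization inequality is in hand, your reductions of (a) and (b) to it are sound and match the paper's.
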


\begin{proof}
Let us first prove that if $f_1g_1=f_2g_2$, then
\begin{equation}\label{account}
\| (\Psi f_1)g_1-(\Psi f_2) g_2	\|_{q_1}\leq M(\|f_1\|_{p_2}\|g_1\|_s+\|f_2\|_{p_2}\|g_2\|_s),
\end{equation}
for some $M$ independent on $f_i$ and $g_i$.
Taking adjoints in the proof of Lemma~\ref{lem:spare} one obtains $f\in S^{p_2}_0$, with $\|f\|_{p_2}\leq \Delta_{p_2/2}^{1/2}(\|f_1\|_{p_2}+\|f_2\|_{p_2})$, such that $f_i=fa_i$ for certain contractive $a_i\in\B$ whose final projections agree with the inicial projection of $f$.
 Now, since $f_1a_1g=f_2a_2g$ we have $a_1g_1=a_2g_2$.
For $i=1,2$, one has
$$
\|\Psi(f_i)g_i-\Psi(f)a_ig_i\|_{q_1}\leq C(\Psi)\|f\|_{p_2}\|g_i\|_s\leq M\left(\|f_1\|_{p_2} + \|f_2 \|_{p_2}\right)\|g_i\|_s
$$
and combining we arrive to
$$
\| (\Psi f_1)g_1-(\Psi f_2) g_2	\|_{q_1} \leq M(\|f_1\|_{p_2}+\|f_2\|_{p_2})(\|g_1\|_s+\|g_2\|_s).
$$
But $\Psi$ is homogeneous and since $f_1g_1=\alpha f_1 \alpha^{-1}g_1$ and  $f_2g_2=\beta f_2 \beta^{-1}g_2$, for $\alpha,\beta>0$, we also obtain
$$
\| (\Psi f_1)g_1-(\Psi f_2) g_2	\|_{q_1}\leq M(\alpha\|f_1\|_{p_2}+ \beta \|f_2\|_{p_2})(\alpha^{-1}\|g_1\|_s+\beta^{-1}\|g_2\|_s).
$$
Minimizing the right-hand side over $\alpha, \beta>0$ we obtain
$$
\| (\Psi f_1)g_1-(\Psi f_2) g_2	\|_{q_1} \leq M
\left(\|f_1\|_{p_2}^{1/2}\|g_1\|_s^{1/2}+\|f_2\|_{p_2}^{1/2}\|g_2\|_s^{1/2}\right)^2
\leq 2M (\|f_1\|_{p_2}\|g_1\|_s+\|f_2\|_{p_2}\|g_2\|_s),
$$
which proves (\ref{account}).

We now prove that $\Psi_{(s)}\in\mathscr C(S^{p_1}_0,S^{q_1})_B$. Take $h\in\mathfrak F$ and $a\in B$. Let $v$ be the phase of $ha$ so that $ha=v|ha|^{p_1/p_2} |ha|^{p_1/s}$. One has
$$
(\Psi_{(s)}h)a = \Psi(u|h|^{p_1/p_2})|h|^{p_1/s} a
, \quad \text{while}\quad \Phi_{(s)}(ha)=
\Psi(v|ha|^{p_1/p_2})|ha|^{p_1/s}.
$$
And since 
$u|h|^{p_1/p_2} |h|^{p_1/s} a=v|ha|^{p_1/p_2})|ha|^{p_1/s}= ha$
we may apply (\ref{account}) to get
\begin{align*}
\|\Phi_{(s)}(ha)- (\Phi_{(s)}h)a	\|_{q_1}&\leq 
M \big{(}\big{\|}u|h|^{p_1/p_2}\big{\|}_{p_2}  \big{\|} |h|^{p_1/s} a \big{\|}_s+ 
\big{\|} v|ha|^{p_1/p_2} \big{\|}_{p_2} \big{\|} |ha|^{p_1/s} \big{\|}_s\big{)}
\\
&\leq  M\big{(}\|h\|_{p_1}^{p_1/p_2} \|h\|_{p_1}^{p_1/s}\|a\|_B + \|ha\|_{p_1}^{p_1/p_2} \|ha\|_{p_1}^{p_1/s}\big{)}
\leq  2M\|h\|_{p_1}\|a\|_B.
\end{align*}
Finally, we prove the ``moreover'' part. (a) is obvious. As for (b),
take $a\in B$ and $h\in S^{p_1}$. Let $w$ be the phase of $ah$ so that $ah=w|ah|^{p_1/p_2} |ah|^{p_1/s}$. One has
$$
\Psi_{(s)}(ah) = \Psi(w|ah|^{p_1/p_2})|ah|^{p_1/s}  \quad \text{and}\quad a\Psi_{(s)}(h)=
a\Psi(u|h|^{p_1/p_2})|h|^{p_1/s}.
$$
Assuming that $\Psi$ is a bicentralizer,
$$
\| a\Psi(u|h|^{p_1/p_2})- \Psi(au|h|^{p_1/p_2})\|_{q_2}\leq L(\Psi)\|a\|_B\big{\|} \:|h|^{p_1/p_2}\: \big{\|}_{p_2}.
$$
Now, since $w|ah|^{p_1/p_2}|ah|^{p_1/s}=au|h|^{p_1/p_2}|h|^{p_1/s}=ah$,
\begin{align*}
\big{\|} \Psi_{(s)}&(ah) - a\Psi_{(s)}(h)\big{\|}_{q_1}=
\left\| \Psi(w|ah|^\frac{p_1}{p_2})|ah|^\frac{p_1}{s}  - a\Psi(u|h|^\frac{p_1}{p_2})|h|^\frac{p_1}{s}\right\|_{q_1}\\
&\leq 
\left\| \Psi(w|ah|^\frac{p_1}{p_2})|ah|^\frac{p_1}{s}  - \Psi(au|h|^\frac{p_1}{p_2})|h|^\frac{p_1}{s}\right\|_{q_1}+
\left\| \Psi(au|h|^\frac{p_1}{p_2})|h|^\frac{p_1}{s}  - a\Psi(u|h|^\frac{p_1}{p_2})|h|^\frac{p_1}{s}\right\|_{q_1}\\
& \leq M\left( \|w|ah|^\frac{p_1}{p_2}\|_{p_2}\||ah|^\frac{p_1}{s}\|_s + \| au|h|^\frac{p_1}{p_2}\|_{p_2}\|\:|h|^\frac{p_1}{s}\:\|_s \right)+
L(\Psi)\|a\|_B \left\| |h|^\frac{p_1}{p_2}  \right\|_{p_2} \|\:|h|^\frac{p_1}{s}\|_s\\
&\leq M \|a\|_B  \|\:|h|\|_{p_1}^{{p_1/p_2}+ {p_1/s}}\\
&\leq M\|a\|_B\| h\|_{p_1}.
\end{align*}
This completes the proof.
\end{proof}

In contrast to Proposition~\ref{prop:cova1}, which is supported on Theorem~\ref{qlessp}, the preceding result is  ``elementary'' as it only depends on the fact that H\"older inequality is sharp in the the following sense: if $q^{-1}=p^{-1}+s^{-1}$, then every $h\in S^q$ can be factorized as $h=fg$, with $\|h\|_q=\|f\|_p\|g\|_s$.
 
\medskip

By following the proofs of Proposition~\ref{prop:cova1} and \ref{version} one can obtain a curious ``extension'' result: every right centralizer $\Phi_0: S^{p}_0\To S^{q}$ admits an extension $\Phi: S^{p}\To L(\Hi)$ which is a right centralizer from $S^p$ to $S^q$.

\subsection{A contravariant transformation}\label{sec:contra}
We turn to the action of $\Hom(-,S^r)$.
Please be careful with the positions of the indices.

\begin{proposition}\label{prop:contra}
Assume $p_1^{-1}+p_2^{-1}= q_1^{-1}+q_2^{-1}=r^{-1}$, with $0<r<\infty$.
Then to each right centralizer $\Phi: S^{p_1}_0\To S^{q_1}$ there corresponds a mapping  $\Gamma:  S^{q_2}\To L(\Hi)$ which is a  left centralizer from $S^{q_2}$ to $ S^{p_2}$ and obeys the estimate
\begin{equation}\label{eq:contra}
\|g(\Phi f) +(\Gamma g) f \|_r\leq M \|g\|_{q_2} \|f\|_{p_1},\quad\quad(f\in \frak F ,g\in S^{q_2}).
\end{equation} 
Such a $\Gamma$ is unique, up to strong equivalence.
\end{proposition}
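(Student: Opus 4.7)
The plan is to produce $\Gamma$ pointwise by trivializing, for each fixed $g\in S^{q_2}$, the ``composition centralizer'' $f\mapsto g\Phi(f)$, which takes values in $S^r$ by H\"older since $q_1^{-1}+q_2^{-1}=r^{-1}$. Indeed, for $f\in S^{p_1}_0$ and $a\in B$,
$$\|g\Phi(fa)-g\Phi(f)a\|_r\leq \|g\|_{q_2}\|\Phi(fa)-\Phi(f)a\|_{q_1}\leq R(\Phi)\|g\|_{q_2}\|f\|_{p_1}\|a\|_B,$$
so $f\mapsto g\Phi(f)$ is a homogeneous right centralizer from $S^{p_1}_0$ to $S^r$, with constant at most $R(\Phi)\|g\|_{q_2}$. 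Since $p_2>0$ forces $r<p_1$, the right-module version of Theorem~\ref{qlessp} supplies a right-module morphism approximating this centralizer within $KR(\Phi)\|g\|_{q_2}\|f\|_{p_1}$. By Lemma~\ref{mor}(b) and the right-module analogue of (\ref{eq:HomSpSq}), that morphism has the form $f\mapsto -\Gamma(g)f$ for a uniquely determined $\Gamma(g)\in S^{p_2}$; this yields (\ref{eq:contra}). Homogeneity of $\Gamma$ in $g$ is arranged by selecting one representative per ray, exactly as in the proof of Proposition~\ref{prop:cova1}.

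To verify the left-centralizer condition, fix $a\in B$ and $g\in S^{q_2}$. Applying (\ref{eq:contra}) to $ag$ gives
$$\|ag\Phi(f)+\Gamma(ag)f\|_r\leq M\|ag\|_{q_2}\|f\|_{p_1}\leq M\|a\|_B\|g\|_{q_2}\|f\|_{p_1},$$
while multiplying on the left by $a$ in (\ref{eq:contra}) for $g$ gives
$$\|ag\Phi(f)+a\Gamma(g)f\|_r\leq M\|a\|_B\|g\|_{q_2}\|f\|_{p_1},$$
for every $f\in\mathfrak F$. Subtracting, $\|(\Gamma(ag)-a\Gamma(g))f\|_r\leq 2M\|a\|_B\|g\|_{q_2}\|f\|_{p_1}$. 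Since both $\Gamma(ag)$ and $a\Gamma(g)$ lie in $S^{p_2}$, I then apply the sharp form of H\"older's inequality,
$$\|h\|_{p_2}=\sup\{\|hf\|_r: f\in\mathfrak F,\ \|f\|_{p_1}\leq 1\}\qquad(h\in S^{p_2}),$$
to conclude $\|\Gamma(ag)-a\Gamma(g)\|_{p_2}\leq 2M\|a\|_B\|g\|_{q_2}$, as required. Uniqueness is the same trick: if $\Gamma'$ also satisfies (\ref{eq:contra}), subtracting the two estimates forces $\|(\Gamma-\Gamma')(g)f\|_r\leq 2M\|g\|_{q_2}\|f\|_{p_1}$, and hence $\|\Gamma(g)-\Gamma'(g)\|_{p_2}\leq 2M\|g\|_{q_2}$, i.e.\ $\Gamma\approx\Gamma'$.

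The only substantive ingredient is the right-module form of Theorem~\ref{qlessp}; everything else is bookkeeping with H\"older and its duality-type converse for Schatten norms. The one point worth double-checking is that the morphism extracted from Theorem~\ref{qlessp} is automatically left multiplication by an element of $S^{p_2}$ (rather than by an arbitrary $\ell\in L(\mathscr H)$), which is precisely the content of (\ref{eq:HomSpSq}) read contravariantly.
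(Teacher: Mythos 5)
Your proposal is correct and follows essentially the same route as the paper: for each fixed $g$ trivialize the composition $f\mapsto g\Phi(f)$ via Theorem~\ref{qlessp}, implement the resulting right-module morphism through Lemma~\ref{mor}(b), and read off both the left-centralizer property and uniqueness from the duality $\|h\|_{p_2}=\sup\{\|hf\|_r: f\in\mathfrak F,\ \|f\|_{p_1}\le 1\}$. The only slip is your closing remark that $\Gamma(g)$ itself lands in $S^{p_2}$: the morphism supplied by Theorem~\ref{qlessp} need not be continuous, so (\ref{eq:HomSpSq}) does not apply and $\Gamma(g)$ is merely an element of $L(\mathscr H)$ --- which is precisely why the statement (and the paper) only asserts that the differences $\Gamma(ag)-a\Gamma(g)$ fall in $S^{p_2}$, exactly what your duality argument in fact establishes.
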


\begin{proof}
It is clear that any homogeneous mapping $\Gamma: S^{q_2}\To L(\Hi)$ fulfilling (\ref{eq:contra}) is a left centralizer from $S^{q_2}$ to $ S^{p_2}$.
To obtain one, take $g\in S^{q_2}$ and consider the composition $f\in S^{p_1}_0\longmapsto g(\Phi f)\in S^r$. This is a right centralizer, with centralizer constant at most $\|g\|_{q_2}R(\Phi)$ and since $r< p_1$ Theorem~\ref{qlessp} provides us with a morphism of right modules $\gamma_g: \mathfrak F\To B$ such that 
$\|g(\Phi f)+   \gamma_g(f)\|_r\leq K(p_1,r)R(\Phi)\|g\|_{q_2}\|f\|_{p_1}$. Selecting a linear map in $L(\Hi)$ implementing $\gamma_g$ in a homogeneous manner gives $\Gamma$.
\end{proof}

\subsection{The algebra behind all this}\label{sec:counterpart}
In this Section we briefly comment on the algebraic counterpart of Sections~\ref{sec:cova} and \ref{sec:contra}. We assume that the reader is acquainted with Yoneda Ext groups, as presented in \cite{hs}. The remainder of the paper is independent on this Section.
\medskip

Let us begin with the covariant case, considering right $B$-modules. Suppose we are given an extension of quasi Banach modules
\begin{equation}\label{we}
\begin{CD}
 0@>>> Y@>\imath >> Z@>\pi>> X@>>> 0
 \end{CD}
\end{equation}
If $E$ is another right module and we apply $\Hom(E,-)_B$ to (\ref{we}) we get an exact sequence (of linear spaces)
\begin{equation}\label{hom-ext}
\begin{CD}
0 @>>> \Hom(E,Y)_B @>{\imath_\circ}>> \Hom(E,Z)_B @>{\pi_\circ}>> \Hom(E,X)_B @>\alpha>> \Ext(E,Y)_B
\end{CD}
\end{equation}
Notice that $\imath_\circ$  and  $\pi_\circ$ are just the functorial images of $\imath$ and $\pi$. 
The connecting map $\alpha$ sends each homomorphism $\phi$ into the (class of the) lower extension in the pull-back diagram
$$
\begin{CD}
0@>>> Y@> \imath >> X @>\pi>> Z@>>> 0\\
&& @|  @AAA @AA\phi A \\
0@>>> Y@>>> \PB @>>> E@>>> 0\\
\end{CD}
$$
If $\Ext(E,Y)_B=0$, then (\ref{hom-ext}) represents an extension of $\Hom(E,Z)_B$ by $\Hom(E,Y)_B$. If, besides, $E$ is a bimodule, then (\ref{hom-ext}) is an extension of right modules.
\medskip

 Proposition~\ref{prop:cova1} corresponds to the case where $X=S^{p_1}, Y=S^{q_1}, Z$ is the extension induced by a right centralizer $\Phi:S^{p_1}_0\To S^{q_1}$ and $E=S^s$, with $s>q_1$. Then Theorem~\ref{qlessp} states that $\Ext(E,Y)_B=0$ and $\Hom(S^s, S^{p_1})_B= S^{p_2}, \Hom(S^s, S^{q_1})_B= S^{q_2}$ with $p^{-1}_1=s^{-1}+p^{-1}_2,
 q^{-1}_1=s^{-1}+q^{-1}_2$, so that (\ref{hom-ext}) can be seen as an extension of $S^{p_2}$ by $S^{q_2}$ and, actually, one has $\Hom(S^s, Z_\Phi)_B\cong Z_{\Phi^{(s)}}$.

 \medskip

In a similar vein, if we apply $\Hom(-,F)_B$ to (\ref{we}) we obtain the exact sequence
\begin{equation}\label{hom-ext-con}
\begin{CD}
0 @>>> \Hom(X,F)_B @>{\pi^\circ}>> \Hom(Z,F)_B @>{\imath^\circ}>> \Hom(Y,F)_B @>\beta>> \Ext(X,F)_B
\end{CD}
\end{equation}
Here, $\beta$ sends a given homomorphism $\phi:Y\To E$ into the (class of the) lower row of the push-out diagram
$$
\begin{CD}
0@>>> Y@>\imath >> X @>>> Z@>>> 0\\
&& @V\phi VV @VVV @| \\
0@>>> E@>>> \PO @>>> Z@>>> 0\\
\end{CD}
$$
If $\Ext(X,F)_B=0$, then (\ref{hom-ext-con}) is an extension of $\Hom(Y,F)_B$ by $\Hom(X,E)_B$ which lives in the category of left modules if $E$ is a bimodule.

\medskip

Proposition~\ref{prop:contra} corresponds to the case where (\ref{we}) is the extension induced by a right centralizer $\Phi:S^{p_1}_0\To S^{q_1}$ and $F=S^r$, with $0<r<p_1$. Theorem~\ref{qlessp} states that $\Ext(X,F)_B=0$ and since
$\Hom(Y,F)_B=S^{q_2},  \Hom(X,F)_B=S^{p_2}$, then (\ref{hom-ext-con}) is an extension of left modules, with quotient $S^{q_2}$ and subspace $S^{p_2}$. 
Actually, if $\Phi$ and $\Gamma$ are as in (\ref{eq:contra}), one has $\Hom(Z_\Phi, S^r)_B\cong Z_{\Gamma}$, as left modules.
 \medskip

The procedure described in Proposition~\ref{version} works as a tensor product. And indeed it is. It can be proved that if $Z_\Psi$ is the completion of $S^{q_2}\oplus_\Psi S^{p_2}_0$, then $Z_{\Psi_{(s)}}$ represents the tensor product of $X_\Psi$ and $S^s$ in the category of quasi Banach $B$-modules. (Here, we consider $X_\Psi$ as a right module and $S^s$ as a left module: the resulting object is a right module because $S^s$ is a bimodule.)  This means that the bilinear operator $\theta:Z_\Psi\times S^s\To X_{\Psi_{(s)}}$ defined by $\theta((g,f),h)=(gh,fh)$ has the following universal property: for every quasi Banach space $V$ and every bilinear operator $\beta:X_\Psi\times S^s\To V$ which is balanced in the sense of satisfying the identity $\beta(xa,h)=\beta(x,ah)$ for $a\in B, x\in X_\Psi, h\in S^s$, there is a unique linear operator $\lambda:X_{\Psi^{(s)}}\To V$ such that $\lambda(\theta(x,h))=\beta(x,h)$.

This can be obtained combining Pavlov \cite{pavlov} with the ideas of \cite{tensor}. We will not insist on this point.

\section{The case $p\leq q$}\label{qgp}

\subsection{Twisted Hilbert spaces}
In this Section we describe the extensions of $S^p$ by $S^q$, with $0<p\leq q\leq\infty$, by means of the so-called
twisted Hilbert spaces. These are self-extensions of $\mathscr H$ in the category of (quasi) Banach spaces, that is, short exact sequences of (quasi) Banach spaces and operators
\begin{equation}\label{dia:THS}
\begin{CD}
0@>>> \Hi @>\jmath>> T @>\varpi >> \Hi @>>> 0
\end{CD}
\end{equation}
As a matter of fact, the middle space $T$ must be (isomorphic to) a Banach space \cite[Theorems 4.3(iii) and 4.10]{k} and has type $2-\e$ and cotype $2+\e$ for every $\e>0$; see \cite[Corollary~1]{elp} or \cite[Theorems 6.4 and 6.5]{k-type} for the  ``type part'' and then \cite[Proposition 11.10]{DJT} for the ``cotype part''. Moreover, $T$ is itself isomorphic to a Hilbert space if and only if (\ref{dia:THS}) splits. The existence of nontrivial twisted Hilbert spaces was first established by Enflo, Lindenstrauss, and Pisier \cite{elp}. Later on Kalton and Peck \cite{kp} constructed fairly concrete examples, among them the nowadays famous Kalton-Peck space $Z_2$.

As it is well-known, twisted Hilbert
spaces are in correspondence with quasilinear maps on $\mathscr H$, that is, homogeneous maps $\phi:\mathscr H\To\mathscr H$ satisfying an estimate of the form
$$
\|\phi(x+y)-\phi(x)-\phi(y)\|_\mathscr H\leq Q(\|x\|_\mathscr H + \|x\|_\mathscr H)\quad\quad(x,y\in\mathscr H).
$$
(As we did in Section~\ref{they} we can replace the target space by a larger ambient space, or consider $\phi$ defined only on some dense subspace, or both. However, as linear spaces are free modules over the ground field, this is unnecessary to elaborate the theory.) All this can be seen in \cite{bl, cg, k-handbook, kal-mon}. Incidentally, the space $Z_2$ just mentioned is the space one obtains letting $p=2$ and $\varphi(s,t)=s$ in (\ref{eq:phi}).

\subsection{Extensions of $S^2$ by $K$}\label{sec:SpK}
Let us explain how twisted Hilbert spaces give rise to module extensions of the Schatten classes. Consider an exact sequence as in (\ref{dia:THS}).  Without loss of generality we can assume that $\jmath$ is an isometry onto $\ker\varpi$. Also, we may fix a constant $C$ once and for all so that for every $y\in \Hi$ there is $z\in T$ such that $\|z\|_T\leq C\|y\|_{\Hi}$ and $h=\varpi(z)$

Now, suppose $u\in S^2$. Then $u$ factors through $\ell_1$ and so it lifts to $T$ in the sense that there is a bounded $\widehat{u}: \Hi\To T$ such that $u=\varpi\circ \widehat{u}$. Actually, if $\sum_n s_nx_n\otimes y_n$ is a Schmidt expansion of $u$ we may take $\widehat{u}= \sum_n s_nx_n\otimes z_n$, just selecting $z_n\in T$ such that $y_n=\varpi(z_n)$, with $\|z_n\|\leq C$, where $C$ is as before. Note that for $h\in\Hi$ one has $\widehat{u}(h)=\sum_n s_n\langle h|x_n\rangle z_n$, hence 
$\|\widehat{u}(h)\|_T\leq C|(s_n)|_2\| h\|_{\Hi}$ and $\widehat{u}$ is compact, with $\|\widehat{u}:\Hi\To T\|\leq C\|u\|_2$.

This lifting property allows us to construct an extension of $S^2$ by $K$ as follows. We set $X=\{x\in K(\mathscr H,T): \varpi\circ x\in S^2\}$ quasinormed by
$\|x\|=\max(\|x: \mathscr H\To T\|, \|\varpi\circ x\|_2)$. Then $X$ is a quasinormed right $B$-module under the product $xa=x\circ a$, where $a\in B$. We have an exact sequence of homomorphisms
\begin{equation}\label{KSpp}
\begin{CD}
0@>>> K @>\jmath_\circ >> X @>\varpi_\circ>> S^2 @>>> 0.
\end{CD}
\end{equation}
Clearly, $\imath_\circ$ is an isometry, and $\pi_\circ$ is onto and open. This implies that $X$ is complete and so (\ref{KSpp}) is an extension of quasi Banach modules. Some comments are in order:

\begin{itemize}
\item While one can replace $2$ by any $p\in(0,2)$ in the precedings considerations to obtain an extension of $S^p$ by $K$ in the category of quasi Banach right $B$-modules, the proof breaks down for $p>2$.

\item Of course it remains the question about the splitting of the sequence (\ref{KSpp}). It turns out that (\ref{KSpp}) splits as an extension of right Banach $B$-modules if and only if (\ref{dia:THS}) splits as an extension of Banach spaces: the ``if part'' is clear since if $v:\Hi\To T$ is a right inverse of $\varpi$, then $v_\circ: S^2\To X$ is a right inverse for $\varpi_\circ$. As for the converse, suppose $\alpha:S^2\To X$ is a homomorphism such that $(\varpi_\circ)\alpha$ is the identity on $S^2$. Let us fix a normalized $x\in\Hi$. Now, for every $y\in\Hi$ one has
$$
\alpha(x\otimes y)= \alpha\big{(}(x\otimes y)(x\otimes x)\big{)}=
\big{(}\alpha(x\otimes y)\big{)}(x\otimes x).
$$
It follows that $
\alpha(x\otimes y)=x\otimes a(y)$, for some linear map $a:\Hi\To T$, which is necessarily a bounded section of $\varpi$.
\end{itemize}

\subsection{Extensions of $S^p$ by $S^2$, with $p<2$, via $\gamma$-summing operators}
In this Section we begin to fill the gap between the indices of the submodule $S^q$ and the quotient module $S^p$. The idea is to fix $q=2$ and study the $\gamma$-summing norm of the ``obvious'' lifting of operators in $S^p$ with $0<p<2$.

We require some basic facts from the theory of absolutely summing operators that the reader can consult in \cite[Chapter 12]{DJT}.
The key notion is the following: an operator $v: E\To F$ acting between Banach spaces is $\gamma$-summing if there is a constant $c$ such that for every finite sequence $(x_k)_{1\leq k\leq n}$ in $E$ one has
\begin{equation}\label{eq:as}
\left(\int_\mathscr S \Big{\|} \sum_{1\leq k\leq n} g_k(s)v(x_k)\Big{\|}^2 dP(s) \right)^{1/2}\leq c\cdot \sup_{x'}\left(\sum_{1\leq k\leq n}|\langle x', x_k\rangle|^2\right)^{1/2},
\end{equation}
where the sup is taken for $x'$ in the unit ball of $E'$ and $(g_k)$ is a sequence of independent standard Gaussian variables on a probability space $(\mathscr S, P)$.
The least constant $c$ for which the preceding inequality holds is called the $\gamma$-summing norm of $v$ and will be denoted by $\|v\|_\gamma$.

\begin{lemma}\label{lem:u}
Let (\ref{dia:THS}) be a twisted Hilbert space and let $u$ be a finite rank operator on $\Hi$. Let $\sum_{1\leq k\leq n} s_kx_k\otimes y_k$ be a Schmidt expansion of $u$ and take $z_k\in T$ such that $\varpi(z_k)=y_k$ with $\|z_k\|\leq C$. Define $\widehat{u}:\Hi\To T$ by $\widehat{u}=\sum_{1\leq k\leq n} s_kx_k\otimes z_k$.

Then, for every $0<p<2$, there is a constant $L=L(p,T)$ such that $\|\widehat{u}\|_\gamma\leq L\|u\|_p$.
\end{lemma}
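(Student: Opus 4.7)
My plan is to reduce $\|\widehat u\|_\gamma$ to a single Gaussian series adapted to the Schmidt data of $u$ and then bring in the type of $T$.

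First, I would compute the $\gamma$-summing norm on an ONB suited to $\widehat u$. Extend the orthonormal family $(x_k)_{1\leq k\leq n}$ from the Schmidt expansion of $u$ to a full orthonormal basis of $\Hi$. Because the domain is Hilbert, the $\gamma$-summing norm agrees with the $\gamma$-radonifying norm and can be computed on any ONB: this comes from the ``ideal property'' $\|\widehat u\circ \alpha\|_\gamma \leq \|\widehat u\|_\gamma \|\alpha\|$ for contractions $\alpha:\ell^2\To\Hi$, together with the observation that every finite weakly-$\ell^2$ sequence in $\Hi$ arises as the image of the standard $\ell^2$-basis under such a contraction (the ideal property in turn follows, via SVD and unitary invariance, from the fact that for independent mean-zero random vectors $Y,Z$ in any Banach space one has $\mathbb E\|Y\|^2\leq \mathbb E\|Y+Z\|^2$). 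Since $\widehat u(x_k)=s_k z_k$ for $k\leq n$ and $\widehat u(x_k)=0$ for $k>n$, this yields
$$
\|\widehat u\|_\gamma^2 \;=\; \mathbb E\Bigl\|\sum_{k=1}^n g_k \widehat u(x_k)\Bigr\|_T^2 \;=\; \mathbb E\Bigl\|\sum_{k=1}^n s_k g_k z_k\Bigr\|_T^2.
$$

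Second, I would invoke the type of $T$. As recalled at the start of the section, every twisted Hilbert space has type $2-\e$ for every $\e>0$; since $T$ has nontrivial type, Rademacher and Gaussian type are equivalent up to constants, so one can apply the Gaussian version of the type inequality directly. Fix any $r\in(p,2)$, which is possible since $p<2$; then, writing $M_r$ for the Gaussian type-$r$ constant of $T$,
\begin{align*}
\Bigl(\mathbb E\Bigl\|\sum_{k=1}^n s_k g_k z_k\Bigr\|_T^2\Bigr)^{1/2} &\leq M_r\Bigl(\sum_{k=1}^n (s_k\|z_k\|_T)^r\Bigr)^{1/r}\\
&\leq M_r\,C\Bigl(\sum_{k=1}^n s_k^r\Bigr)^{1/r} \;=\; M_r\,C\,\|u\|_r,
\end{align*}
using the uniform bound $\|z_k\|_T\leq C$ on the lifts. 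Since $r\geq p$, the embedding $\ell^p\hookrightarrow \ell^r$ on the singular-value side gives $\|u\|_r\leq \|u\|_p$, so $\|\widehat u\|_\gamma\leq L\|u\|_p$ with $L=M_rC$ depending only on $p$ and $T$.

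The only delicate step is the ONB reduction in the first paragraph: once the Gaussian average over the Schmidt basis is identified with $\|\widehat u\|_\gamma$, the type estimate is routine. The role of the hypothesis $p<2$ is precisely that it leaves room to pick $r\in(p,2)$, in line with the remarks in Section~\ref{sec:SpK} to the effect that $p=2$ is the natural ceiling for the lifting construction underlying $\widehat u$.
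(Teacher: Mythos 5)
Your argument is correct and follows essentially the same route as the paper: reduce $\|\widehat u\|_\gamma$ to the Gaussian average $\bigl(\mathbb E\|\sum_k s_k g_k z_k\|_T^2\bigr)^{1/2}$ over the Schmidt basis (the paper restricts to the initial subspace and cites \cite[Theorem 12.15]{DJT} for this identification, which is the same fact you sketch), and then apply the Gaussian type inequality together with $\|z_k\|\leq C$. The only cosmetic difference is that the paper applies Gaussian type $p$ directly, whereas you pass through type $r$ for some $r\in(p,2)$ and then use the inclusion $\ell^p\hookrightarrow\ell^r$; both are valid.
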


\begin{proof}
Let $E=\spa(x_1,\dots,x_k)$ be the ``initial subspace'' of $u$ and let $e$ be the orthogonal projection of $\Hi$ onto $E$, so that $u=u_0\circ e$, with $u_0=u|_E$. We consider the operator $\widehat{u}_0:E\To T$ given by $\widehat{u}_0=\sum_{1\leq k\leq n} s_kx_k\otimes z_k$. The sequence $(x_k)$ is an orthonormal basis of $E$ and so (cf. \cite[Theorem 12.15]{DJT})
\begin{align*}
\| \widehat{u}_0 \|_\gamma&= \left(\int_\mathscr S \Big{\|} \sum_{1\leq k\leq n} g_k(s)\widehat{u}_0(x_k)\Big{\|}^2 dP(s) \right)^{1/2}\\
&=
\left(\int_\mathscr S \Big{\|} \sum_{1\leq k\leq n} g_k(s)s_k z_k\Big{\|}^2 dP(s) \right)^{1/2} \leq T^\gamma_p\left( \sum_{k}C^p s_k^p	\right)^{1/p}= T^\gamma_p\cdot C\cdot \|u\|_p,
\end{align*}
where $T^\gamma_p$ is the Gaussian type $p$ constant of $T$.
\end{proof}

Now, for each $u\in K$, we fix a Schmidt expansion which will be called the ``prescribed'' expansion of $u$.
It is assumed that this choice is homogeneous in the sense that if $\sum_n s_nx_n\otimes y_n$ is the prescribed expansion of $u$ and $\lambda$ is a complex number with polar decomposition $\lambda=\sigma|\lambda|$, then the prescribed expansion of $\lambda u$ is $\sum_n |\lambda|s_n (\sigma x_n\otimes y_n)$.

\begin{corollary}\label{cor:tildephi}
Let $\phi:\Hi\To\Hi$ be a quasilinear map. We define a mapping on $\frak F$ taking $\tilde{\phi}(u)=\sum_k s_k x_k\otimes \phi(y_k)$, where $\sum_ks_kx_k\otimes y_k$ is the prescribed expansion of $u$. Then $\tilde \phi: S^p_0\To S^q$ is a right centralizer provided $0<p<2$ and $q>p$.

Moreover, $\tilde{\phi}$ is essentially independent on the prescribed expansion in the sense that any other choice would lead to a centralizer strongly equivalent to $\tilde{\phi}$.
\end{corollary}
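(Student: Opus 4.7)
The plan is to identify $\tilde\phi(u)$ with the ``kernel coordinate'' of the canonical lifting $\widehat u:\Hi\to T$ furnished by Lemma~\ref{lem:u}, and then to convert the $\gamma$-summing bound of that lemma into a Hilbert--Schmidt estimate. I would model $T$ concretely as $\Hi\oplus_\phi\Hi$ in the spirit of Section~\ref{they}, so that $\varpi(v,h)=h$, $\jmath(v)=(v,0)$, and $h\mapsto(\phi(h),h)$ is a homogeneous bounded section of $\varpi$. With the canonical choice $z_k=(\phi(y_k),y_k)$ in Lemma~\ref{lem:u}, the lifting of $u=\sum_k s_k x_k\otimes y_k$ reads coordinate-wise as $\widehat u=(\tilde\phi(u),u)$, so $\tilde\phi(u)$ is tautologically the $\Hi$-component of $\widehat u$.

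The centralizer defect can now be read off almost mechanically. For $a\in B$ the operators $\widehat u\circ a$ and $\widehat{ua}$ are both liftings of $ua$, so their difference falls in $\jmath(\Hi)$ and, coordinate-wise, equals
\[
\widehat u\circ a-\widehat{ua}=\jmath\bigl(\tilde\phi(u)\,a-\tilde\phi(ua)\bigr).
\]
Lemma~\ref{lem:u} bounds $\|\widehat u\|_\gamma\le L\|u\|_p$ and $\|\widehat{ua}\|_\gamma\le L\|u\|_p\|a\|_B$, while the ideal property of the $\gamma$-summing norm yields $\|\widehat u\circ a\|_\gamma\le\|\widehat u\|_\gamma\|a\|_B$. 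Combining these via the triangle inequality, using that $\jmath$ is isometric, and invoking the classical identity $\|\cdot\|_\gamma=\|\cdot\|_{S^2}$ on operators between Hilbert spaces, one arrives at
\[
\|\tilde\phi(ua)-\tilde\phi(u)\,a\|_{S^2}\le M\|u\|_p\|a\|_B.
\]
Since the embedding $S^2\hookrightarrow S^q$ is bounded for $q\ge 2$, this immediately delivers the right-centralizer property in that range; to cover the residual range $p<q<2$ I would invoke the index-shifting transformation $\Psi\mapsto\Psi_{(s)}$ of Proposition~\ref{version}, which moves the target Schatten exponent along the scale $q^{-1}-p^{-1}$ that parametrises these centralizer spaces by the results of Section~\ref{sec:isom}.

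For the independence clause the same mechanism applies. Two prescribed Schmidt expansions of the same $u$ produce two liftings $\widehat u_1,\widehat u_2$ of $u$, each $\gamma$-bounded by $L\|u\|_p$; their difference again lies in $\jmath(\Hi)$ and coordinate-wise equals $\jmath(\tilde\phi_1(u)-\tilde\phi_2(u))$, so repeating the $\gamma=\mathrm{HS}$ step gives $\|\tilde\phi_1(u)-\tilde\phi_2(u)\|_{S^2}\le 2L\|u\|_p$, which is exactly the strong-equivalence inequality. The main obstacle I foresee is precisely the gap between the natural $S^2$-target produced by the $\gamma$-summing machinery and the stated range $q>p$: for $p<q<2$ the Hilbert--Schmidt identity is blind to the finer Schatten structure, so one must either route the argument through the isomorphisms of Section~\ref{sec:isom} or carry out a singular-value computation adapted to the specific form of $\tilde\phi$.
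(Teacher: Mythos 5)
Your proposal is correct and follows essentially the same route as the paper: lift $u$ to $\Pi_\gamma(\Hi,\Hi\oplus_\phi\Hi)$ via Lemma~\ref{lem:u}, observe that the centralizer defect is the difference of two liftings of $ua$ and hence lands isometrically in $\jmath(\Hi)$, use $\Pi_\gamma(\Hi)=S^2$ to get the Hilbert--Schmidt bound, and descend to the range $p<q<2$ via Proposition~\ref{version}. The only step you leave implicit is the one the paper spells out at the end, namely the verification that the index-shifted centralizer $\Psi_{(s)}$ actually coincides with $\tilde{\phi}$ on $S^p_0$ (via the reparametrized Schmidt expansions $s_n\mapsto s_n^{p/p_2}$), which is a short computation with the polar decomposition.
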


\begin{proof}Consider the twisted Hilbert space induced by $\phi$
$$
\begin{CD}
0@>>> \Hi @>\jmath>> \Hi\oplus_\phi\Hi @>\varpi >> \Hi @>>> 0.
\end{CD}
$$
The fact that the quasinorm of $T=\Hi\oplus_\phi\Hi$ is only equivalent to a norm will not cause any harm to the ensuing argument.
First of all note that for every $y$ in $\Hi$ one has $\|(\phi(y),y)\|_\phi=\|y\|$ and $\varpi((\phi(y),y))=y$. We define two ``lifting'' maps $B,\Lambda: \frak F\To \Pi_\gamma(\Hi, T)$ by the formulae
\begin{equation}
B(u)=\sum_n s_nx_n\otimes(\phi(y_n),y_n)\quad\quad\text{and}\quad\quad \Lambda(u)=\sum_n s_nx_n\otimes(0,y_n),
\end{equation}
where $\sum_ks_kx_k\otimes y_k$ is the prescribed expansion of $u$.
According to Lemma~\ref{lem:u} $B$ is a bounded map from $S^p_0$ to $ \Pi_\gamma(\Hi, T)$, while $\Lambda$ is a morphism of right-modules since $\Lambda(u)(h)=(0,u(h))$ for $h\in\Hi$ and so $\Lambda(ua)= \Lambda(u)a$ for $a\in B$. Incidentally, this shows that $\Lambda$ does not depend on the prescribed expansion of $u$.

Being $B(u)$ and $\Lambda(u)$ liftings of $u$, the difference $B(u)-\Lambda(u)$ can be interpreted as an operator on $\Hi$: actually we have $B(u)-\Lambda(u)=\jmath\circ\tilde{\phi}(u)$. This implies that $\tilde{\phi}:S^p_0\To  \Pi_\gamma(\Hi)$ is a right-centralizer:
$$
\|\tilde{\phi}(ua)-\tilde{\phi}(u)a\|_\gamma=
\|B(ua)-B(u)a\|_\gamma\leq M \big{(} \|ua\|_p+\|u\|_p\|a\|_B \big{)}\quad\quad(u\in S^p_0, a\in B). 
$$
This completes the proof for $q\geq 2$ since $\Pi_\gamma(\Hi)=S^2$, with (``universally'') equivalent norms. The ``uniqueness'' part is clear: $\Lambda(u)$ depends only on $u$ and if $\sum_ns_nx_n'\otimes y_n'$ is another Schmidt expansion of $u$, then $\big{\|}\sum_ns_nx_n'\otimes (\phi(y_n'),y_n')-B(u)\big{\|}_\gamma\leq M\|u\|_p$, where $M$ is a constant depending only on $p$ and $Q[\phi]$ -- through the modulus of concavity of  $ \Hi\oplus_\phi\Hi$.

\medskip

Next we prove that the map $\tilde\phi$ is still a right-centralizer when regarded as a map from  $S_0^p$ to $S^q$ with $0<p<q<2$.

Take $s>0$ so that $q^{-1}=2^{-1}+s^{-1}$ and then $p_2<2$ so that $p^{-1}=p_2^{-1}+s^{-1}$. We know that 
 $\tilde\phi:S^{p_2}_0\To S^2$ is a centralizer.
 
We introduce a second choice of the Schmidt expansions on $S^{p_2}$ as follows.
For every normalized $f\in S^{p_2}$ there is a unique normalized $u\in S^p$ such that 
$f=v|u|^{p/p_2}$, where $v$ is the phase of $u$. Now, if $\sum_ns_nx_n\otimes y_n$ is the prescribed expansion of $u$, then $\sum_ns_n^{p/p_2}x_n\otimes y_n$ is a Schmidt expansion of $f$ and the map $\Psi: S^{p_2}_0\To S^2$ defined by
$$
\Psi(f)=\sum_ns_n^{p/p_2}x_n\otimes \phi(y_n)
$$
is a centralizer -- it is strongly equivalent to  $\tilde\phi$.

Let us activate Proposition~\ref{version} to conclude that if $u=v|u|$ is the polar decomposition of $u\in S^p_0$, then
 the formula
 $$
\Psi_{(s)}(u)=\Psi(v|u|^{p/p_2}) |u|^{p/s}
$$
defines a centralizer from $S^p_0$ to $S^q$. But $
\Psi_{(s)}$ agrees with our old friend $\tilde\phi$. Indeed, if $\sum_ns_nx_n\otimes y_n$ is the prescribed expansion of $u$, then $v=\sum_nx_n\otimes y_n$ and $|u|= \sum_ns_nx_n\otimes x_n$, etc, and so
$$
\Psi_{(s)}u=\Psi(v|u|^{p/p_2}) |u|^{p/s}
= \Psi\left(\sum_n s_n^{p/p_2} x_n\otimes y_n \right)\! \left(\sum_n s_n^{p/s} x_n\otimes x_n \right)= \sum_ns_n x_n\otimes \phi(y_n) =\tilde{\phi}u,
$$
and we are done.
\end{proof}

\subsection{Self-extensions of $S^2$ via Pisier's lifting}\label{sec:S2Pisier}
Consider again a twisted Hilbert space, as in (\ref{dia:THS}). Applying $\Pi_\gamma(\Hi,-)$ we obtain the ``incomplete'' exact sequence
$$
\begin{CD}
0@>>> \Pi_\gamma(\Hi) @>\jmath_\circ >>\Pi_\gamma(\Hi, T) @>\varpi_\circ >> \Pi_\gamma(\Hi)
\end{CD}
$$
Note that $\Pi_\gamma(\Hi)=S^2$, with (``universally'') equivalent norms. Let us show that the preceding sequence is actually a self-extension of $S^2$:

\begin{proposition}\label{prop:Pis}
With the preceding notations the map $\varpi_\circ$ is surjective and so
\begin{equation}\label{dia:Pi}
\begin{CD}
0@>>> S^2 @>\jmath_\circ >>\Pi_\gamma(\Hi, T) @>\varpi_\circ >> S^2 @>>> 0
\end{CD}
\end{equation}
is a self-extension of $S^2$ in the category of right Banach $B$-modules. 
\end{proposition}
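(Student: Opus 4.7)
The plan is to reduce the statement to the surjectivity of $\varpi_\circ$, since everything else is essentially formal. Exactness at the left is clear because $\jmath$ is an isometry, so $\jmath_\circ$ is an isometric embedding. Exactness in the middle follows by inspection: $\varpi \circ v = 0$ if and only if $v$ factors through $\ker \varpi = \jmath(\mathscr H)$, giving $\ker \varpi_\circ = \Pi_\gamma(\mathscr H, \mathscr H)$, which is canonically $S^2$. The right $B$-module structure on each term is just composition on the right, and $\jmath_\circ$ and $\varpi_\circ$ are obviously homomorphisms.

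So the real content is to show that every $u \in S^2 = \Pi_\gamma(\mathscr H)$ admits a $\gamma$-summing lifting to $T$. Fix a Schmidt expansion $u = \sum_n s_n\, x_n \otimes y_n$ with $(s_n) \in \ell^2$ and orthonormal $(x_n), (y_n)$. Using the $C$-bounded homogeneous section of $\varpi$ fixed at the beginning of Section~\ref{sec:SpK}, pick $z_n \in T$ with $\varpi(z_n) = y_n$ and $\|z_n\|_T \leq C$, and form the partial sums
$$
\widehat{u}_N = \sum_{n \leq N} s_n\, x_n \otimes z_n \colon \mathscr H \To T,
$$
which plainly satisfy $\varpi \circ \widehat{u}_N = u_N$. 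If we can show that $(\widehat{u}_N)$ is Cauchy in the $\gamma$-summing norm, then its limit $\widehat{u} \in \Pi_\gamma(\mathscr H, T)$ is the desired lifting, by continuity of composition with $\varpi$.

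The core estimate needed is
$$
\left( E \Big\| \sum_{M < n \leq N} g_n s_n z_n \Big\|_T^2 \right)^{1/2} \leq C' \left( \sum_{M < n \leq N} s_n^2 \right)^{1/2},
$$
with $C'$ independent of $M$, $N$, and of the orthonormal sequence $(y_n)$ and liftings $(z_n)$. This is the heart of the matter and the main obstacle in the proof: it is a Gaussian lifting property for the twisted Hilbert space $T$, essentially the content of Pisier's theorem. The estimate rests on the K-convexity of $T$, which in turn comes from the fact established earlier that $T$ has nontrivial (indeed $2-\varepsilon$) type for every $\varepsilon > 0$. Given this lifting estimate, Cauchy-ness of $(\widehat{u}_N)$ is immediate since $(s_n) \in \ell^2$, completeness of $\Pi_\gamma(\mathscr H, T)$ delivers $\widehat{u}$, and surjectivity of $\varpi_\circ$ follows. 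I expect the remaining verifications—well-definedness of the lifting independently of the choices and the $B$-module homomorphism property—to be routine.
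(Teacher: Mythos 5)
Your reduction to the surjectivity of $\varpi_\circ$ and the formal exactness checks are fine, but the step you yourself call ``the heart of the matter'' contains a genuine gap. The estimate
$$
\Bigl( E \bigl\| \sum_{M < n \leq N} g_n s_n z_n \bigr\|_T^2 \Bigr)^{1/2} \leq C' \Bigl( \sum_{M < n \leq N} s_n^2 \Bigr)^{1/2},
$$
claimed with $C'$ independent of the (individually chosen, merely bounded) liftings $z_n$, is exactly a Gaussian type~$2$ inequality for $T$ applied to the vectors $w_n=s_nz_n$ with $\|w_n\|\leq Cs_n$ --- and this fails for every nontrivial twisted Hilbert space. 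Concretely, in the Kalton--Peck space $Z_2=\Hi\oplus_\phi\Hi$ the vectors $z_k=(0,e_k)$ are norm-one liftings of the orthonormal basis, yet $\|\sum_{k\leq n}\pm(0,e_k)\|_\phi\sim\sqrt n\,\log n$ for every choice of signs, so no such $C'$ exists. That $T$ is K-convex (has type $2-\e$) gives nothing close to a type-$2$ inequality; this is precisely the obstruction.

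The correct use of Pisier's lifting theorem --- and what the paper does --- is different in a crucial way: the hypothesis is K-convexity of the \emph{quotient} $\Hi$ (automatic for a Hilbert space), and the conclusion is that for each \emph{finite} sequence $(s_ky_k)$ there \emph{exist} liftings $z_k\in T$ with $\varpi(z_k)=s_ky_k$, chosen collectively in a way that depends on the whole sequence, whose Gaussian average is dominated by that of $(s_ky_k)$ in $\Hi$, i.e.\ by $M(\sum_ks_k^2)^{1/2}$. One cannot first fix the $z_n$ via the bounded section and then invoke Pisier. This also undermines your Cauchy-sequence argument, since the collective liftings for different $N$ need not be consistent. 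The paper sidesteps both problems: it shows that every \emph{finite-rank} $u$ admits a lifting $\widehat u$ with $\|\widehat u\|_\gamma\leq M_1\|u\|_2$ for a uniform $M_1$, i.e.\ $\varpi_\circ$ is almost open on a dense subspace, and surjectivity and openness then follow from completeness of $\Pi_\gamma(\Hi,T)$ by the standard argument.
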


\begin{proof}
It suffices to see that $\varpi_\circ$ is ``almost open''. Let $u$ be a finite-rank operator and let $\sum_ks_kx_k\otimes y_k$ be a Schmidt expansion of $u$. As $\Hi$ is $B$-convex we can apply Pisier's lifting in \cite[Theorem and Final Remark]{P} to obtain a finite sequence $(z_k)$ in $T$ such that $\varpi(z_k)=s_ky_k$ and
$$
\left(\int_\mathscr S \Big{\|} \sum_{1\leq k\leq n} g_k(s)z_k\Big{\|}^2 dP(s) \right)^{1/2}\leq M
\left(\int_\mathscr S \Big{\|} \sum_{1\leq k\leq n} g_k(s)s_ky_k\Big{\|}^2 dP(s) \right)^{1/2},
$$
where the $g_k$'s are as in (\ref{eq:as}) and  $M$ depends only on $T$.
Set $\widehat{u}=\sum_{1\leq k\leq n} x_k\otimes z_k$. Then $\widehat{u}$ is a lifting of $u$ and the preceding inequality shows that $\|\widehat{u}\|_\gamma\leq M_1\|u\|_2$, where $M_1$ is a constant depending only on $T$.
\end{proof}

Since $\Pi_\gamma(\Hi, T)$ contains a copy of $T$ (think of the rank-one operators) 
it is clear that (\ref{dia:Pi}) splits as an extension of Banach spaces (or as one of Banach modules) if and only if so (\ref{dia:THS}) does, which happens if and only if $T$ is a Hilbert space.

\subsection{The spatial part of a centralizer}
In the preceding Sections we have seen that quasilinear maps on $\Hi$ (equivalently, twisted Hilbert spaces) induce right centralizers on the Schatten classes (equivalently, right module extensions): Corollary~\ref{cor:tildephi} is particularly clear in this respect. The next result shows that, conversely, every centralizer gives rise to a quasilinear map that can be properly called its ``spatial part''.

\begin{lemma}\label{lem:spatial} Let $0<p\leq q\leq \infty$.
To each right centralizer $\Phi: S^p_0\To S^q$ there corresponds a quasilinear map $\phi:\Hi\To\Hi$ such that
\begin{equation}\label{times}
\|\Phi(x\otimes y)-  x\otimes \phi(y)\|_q\leq M\|x\|\|y\|
\end{equation}
for some constant $M$ and all $x,y\in\Hi$. 
Moreover:
\begin{itemize}
\item[(a)] Such a $\phi$ is unique, up to strong equivalence.
\item[(b)] The map $J:\mathscr C(S^p_0,S^q)^\sim_B\To \mathscr Q(\Hi)^\sim$ defined by declaring $[\phi]=J[\Phi]$ if (\ref{times}) holds is correctly defined and linear.
\end{itemize}
\end{lemma}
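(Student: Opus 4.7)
The plan is to manufacture $\phi$ by probing $\Phi$ with rank-one operators sharing a fixed unit vector. Pick once and for all a normalized $\eta\in\mathscr H$ and set
$$
\phi(y)=\Phi(\eta\otimes y)(\eta)\qquad(y\in\mathscr H).
$$
Since $\Phi(\eta\otimes y)\in S^q\subset B$ is compact, $\phi(y)\in\mathscr H$, and $\phi$ is homogeneous because $\Phi$ is. The essential ingredient is the identity $(\eta\otimes y)\circ(x\otimes\eta)=x\otimes y$, which is a one-line check from $\langle\eta\,|\,\eta\rangle=1$. Combined with the general formula $u\circ(x\otimes\eta)=x\otimes u(\eta)$, valid for every $u\in B$, this lets me apply the right-centralizer inequality with $f=\eta\otimes y$ and $a=x\otimes\eta$ --- noting that $\|f\|_p=\|y\|$ and $\|a\|_B=\|x\|$ --- to obtain (\ref{times}) at once with constant $R(\Phi)$.

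To verify that $\phi$ is quasilinear from $\mathscr H$ to $\mathscr H$, I would invoke Lemma~\ref{lem:spare}, by which $\Phi$ is automatically quasilinear with constant controlled by $R(\Phi)$. Since $\eta\otimes(y_1+y_2)=\eta\otimes y_1+\eta\otimes y_2$ in $S^p_0$, the operator $\Phi(\eta\otimes(y_1+y_2))-\Phi(\eta\otimes y_1)-\Phi(\eta\otimes y_2)$ has $S^q$-quasinorm at most $Q(\Phi)(\|y_1\|+\|y_2\|)$; evaluating at the unit vector $\eta$ and using $\|f(\eta)\|_{\mathscr H}\leq\|f\|_B\leq\|f\|_q$ (valid for every $q\in(0,\infty]$ since $s_1(f)\leq\|f\|_q$) yields the desired quasilinear estimate for $\phi$.

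Assertion (a) is a one-line consequence of (\ref{times}): if $\phi_1$ and $\phi_2$ both satisfy it with a common constant $M$, then for any unit $x\in\mathscr H$ one has $\|\phi_1(y)-\phi_2(y)\|=\|x\otimes(\phi_1(y)-\phi_2(y))\|_q\leq 2M\|y\|$, so $\phi_1\approx\phi_2$; in particular the class of $\phi$ is independent of the choice of $\eta$. Linearity of the assignment $\Phi\mapsto\phi$ in assertion (b) is immediate from the defining formula. To see that it descends to the quotient, suppose $\Phi$ represents the zero class, so that there is a morphism of right modules $\alpha:S^p_0\to L(\mathscr H)$ with $\Phi-\alpha$ bounded into $S^q$; by the (obvious extension to $L(\mathscr H)$-valued targets of) Lemma~\ref{mor}(b), $\alpha$ is given by left composition with some $\ell\in L(\mathscr H)$, whence $\alpha(\eta\otimes y)(\eta)=\ell(y)$, and the bound on $\Phi-\alpha$ translates into $\|\phi(y)-\ell(y)\|\leq M\|y\|$, i.e.\ $\phi$ represents the zero class in $\mathscr Q(\mathscr H)^\sim$.

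The only delicate aspect of the whole argument is the bookkeeping around the right-module conventions: one must keep straight that for rank-one operators $(x\otimes y)\cdot a=a^*x\otimes y$, so that the choice $a=x\otimes\eta$ is exactly what converts $\eta\otimes y$ into $x\otimes y$ under the right action. Once this identity is spotted, the lemma drops out of a single application of the right-centralizer estimate on rank-one operators, together with Lemma~\ref{lem:spare} for the quasilinearity upgrade and Lemma~\ref{mor}(b) for identifying morphisms with endomorphisms of $\mathscr H$.
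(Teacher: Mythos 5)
Your proof is correct and follows essentially the same approach as the paper's: both probe $\Phi$ with rank-one operators $\eta\otimes y$ through a fixed unit vector $\eta$, extract $\phi(y)=\Phi(\eta\otimes y)(\eta)$, and then get (\ref{times}), uniqueness, and the descent to the quotients in essentially the same way, invoking Lemma~\ref{lem:spare} for quasilinearity and Lemma~\ref{mor}(b) to identify morphisms with endomorphisms of $\mathscr H$. The only cosmetic difference is that the paper first normalizes $\Phi$ so that $\Phi(\eta\otimes y)=\eta\otimes\phi_\eta(y)$ exactly and then transports between different $\eta$'s using isometries, whereas you derive (\ref{times}) directly from the right-centralizer estimate with $f=\eta\otimes y$ and $a=x\otimes\eta$.
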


\begin{proof}
Let $\Phi: S^p_0\To S^q$ be a right-centralizer for which we may assume (and do) that $\Phi(f)=\Phi(f)e$ for every $f\in S^p_0$ when $e\in B$ is the initial projection of $f$.

Fixing a norm one $\eta\in\mathscr H$, we see that $\Phi(\eta\otimes y)=\eta\otimes \phi$ for some $\phi\in H$ depending on $y$ (and $\eta$). Taking $\phi=\phi_\eta(y)$ we obtain a self-map on $\mathscr H$ which is easily seen to be quasilinear. Let $\zeta$ be another normalized vector in $\mathscr H$ and define $\phi_\zeta$  by the identity $\Phi(\zeta\otimes y)=\zeta\otimes\phi_\zeta(y)$. Let $u\in B$ be an isometry of $\mathscr H$ sending $\zeta$ to $\eta$, so that $(\eta\otimes y)u= u^*(\eta)\otimes y= \zeta\otimes y$. One has
$$
\|\phi_\zeta(y)- \phi_\eta(y)\|=
\|\eta\otimes (\phi_\zeta(y) - \phi_\eta(y))\|_q=
\|\Phi((\eta\otimes y)u)-(\Phi(\eta\otimes y)u)\|_q\leq C[\Phi]\|\eta\|\|y\|\|u\|_B.
$$
Therefore, $\phi_\eta\approx\phi_\zeta$, with $\dist(\phi_\eta,\phi_\zeta)\leq C[\Phi]$ and so $\phi=\phi_\eta$ works in (\ref{times}).

The statement (a) is obvious. 
To prove (b) let us first check that $J$ is correctly defined. 
Suppose $\Phi_1$ and $\phi_1$ satisfy an estimate 
$$
\|\Phi_1(x\otimes y)-  x\otimes \phi_1(y)\|_q\leq M_1\|x\|\|y\|\quad\quad(x,y\in\mathscr H).
$$
If $\Phi_1$ is equivalent to $\Phi$, then $\Phi_1=\Phi+\alpha+\beta$, where $\alpha$ is a morphism of right modules and $\beta$ is bounded. By Lemma~\ref{mor}(b), $\alpha$ is implemented by linear endomorphism of $\Hi$ and so $\alpha(f)=\ell\circ f$ for some fixed $\ell$ and all $f$. In particular $\alpha(x\otimes y)= x\otimes \ell(y)$, so
$$
\|\Phi(x\otimes y)+  x\otimes \ell(y)-  x\otimes \phi_1(y)\|_q\leq M_2\|x\|\|y\|\quad\quad(x,y\in\mathscr H),
$$
where $M_2=M_1+\|\beta\|$. It follows that $\phi\approx\phi_1-\ell$ and so $\phi$ and $\phi_1$ share class in $\mathscr Q(\Hi)^\sim$.

The linearity of $J$ is now clear: assume $\Phi_i$ and $\phi_i$ satisfy estimates 
$$
\|\Phi_i(x\otimes y)-  x\otimes \phi_i(y)\|_q\leq M_i\|x\|\|y\|\quad\quad(x,y\in\mathscr H),
$$
for $i=1,2$,
so that $J[\Phi_i]=[\phi_i]$. Then, if $c_i$ are complex numbers, one has
$$
\|(c_1\Phi_1+c_2\Phi_2)(x\otimes y)-  x\otimes(c_1\phi_1+c_2\phi_2)(y)\|_q\leq M \|x\|\|y\|,
$$
with $M$ independent on $x,y\in\Hi$. 
\end{proof}

\subsection{A natural isomorphism}
We are now ready for the main result of the Section.

\begin{theorem}\label{the:spatial} Consider the ``spatial part'' map $J:\mathscr C(S^p_0,S^q)^\sim_B\To \mathscr Q(\Hi)^\sim$ defined in Lemma~\ref{lem:spatial}.
\begin{itemize}
\item[(a)] If\: $0<p<\infty$ and\:  $p\leq q\leq \infty$, then $J$ is surjective.
\item[(b)] If\: $0<p< q\leq \infty$, then $J$ is moreover an isomorphism.
\end{itemize}
\end{theorem}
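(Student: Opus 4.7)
The strategy is to produce a linear right inverse $T:\mathscr Q(\Hi)^\sim\to\mathscr C(S^p_0,S^q)_B^\sim$ of $J$, which will give part (a), and then to show that $T$ is a two-sided inverse when $p<q$, which will give part (b).

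\emph{Right inverse and part (a).} Given a quasilinear $\phi:\Hi\to\Hi$, we manufacture a centralizer $T[\phi]$ case by case. When $0<p<2$ and $p<q<\infty$ we take $T[\phi]=[\tilde\phi]$, with $\tilde\phi$ the centralizer from Corollary~\ref{cor:tildephi}; since by construction $\tilde\phi(x\otimes y)=x\otimes\phi(y)$, the identity $J\circ T=\mathrm{id}$ is immediate on rank-one inputs. For $p=q=2$, let $T[\phi]$ be the centralizer associated, via Corollary~\ref{all}, to the self-extension $0\to S^2\to\Pi_\gamma(\Hi,\Hi\oplus_\phi\Hi)\to S^2\to 0$ furnished by Proposition~\ref{prop:Pis}; inspection of the Pisier lifting confirms that its spatial part is $[\phi]$. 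Every other pair $(p,q)$ with $p\leq q<\infty$ is reached from a base case by iterated application of the transformations $\Psi\mapsto\Psi_{(s)}$ of Proposition~\ref{version} (which shrinks both indices) and $\Phi\mapsto\Phi^{(s)}$ of Proposition~\ref{prop:cova1} (which enlarges them); a short direct computation on rank-one operators, using $|x\otimes y|=\|y\|(x\otimes x)$ for normalized $x$ and the explicit formulas, shows that both transformations preserve spatial parts up to strong equivalence. Finally, the case $q=\infty$ follows by fixing any finite $q'>p$: a centralizer into $S^{q'}$ is automatically one into $S^{\infty}$ through the inclusion $S^{q'}\subset S^{\infty}$, which preserves both the centralizer estimate and the spatial part.

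\emph{Reduction for part (b).} Suppose $p<q$ and $J[\Phi]=0$. The spatial part $\phi$ of $\Phi$ then decomposes as $\phi=\alpha+\beta$ with $\alpha:\Hi\to\Hi$ linear and $\beta$ bounded. The linear map $\alpha$ yields a morphism of right $B$-modules $A:f\mapsto\alpha\circ f$ by Lemma~\ref{mor}(b), whose spatial part is $\alpha$; and the centralizer $\tilde\beta$ constructed above has spatial part strictly equal to $\beta$. Replacing $\Phi$ by $\Psi=\Phi-A-\tilde\beta$ reduces the problem to showing that any right centralizer $\Psi:S^p_0\to S^q$ satisfying $\|\Psi(x\otimes y)\|_q\leq M\|x\|\|y\|$ for all $x,y\in\Hi$ is trivial, that is, equivalent to a right-module morphism plus a bounded map.

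\emph{The main obstacle.} The difficulty is to pass from the rank-one bound to a global bound $\|\Psi(f)\|_q\leq M'\|f\|_p$: a naive iteration of quasilinearity along the Schmidt expansion of $f$ yields only an estimate of order $\|f\|_1$, or, via a dyadic tree, an estimate polynomial in the rank of $f$, well above the target. To leverage the strict gap $p<q$ we invoke the invariance of $\mathscr C(S^p_0,S^q)_B^\sim$ under the isomorphisms of Section~\ref{sec:isom} and reduce to a convenient representative of the equivalence class. When the invariant $q^{-1}-p^{-1}$ is $\leq-1/2$, we pick the representative $(p_1,\infty)$ with $p_1\in(0,2]$, where Section~\ref{sec:SpK} already gives a bijection between $\Ext_B(S^{p_1},K)$ and $\Ext_\C(\Hi,\Hi)$ by the explicit $\gamma$-summing lifting, so injectivity is immediate. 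For the remaining range $q^{-1}-p^{-1}\in(-1/2,0)$ we reach an index pair $(p_1,q_1)$ with $p_1<2$ and $q_1$ finite, and run an ultraproduct compactness argument parallel to the proof of Theorem~\ref{qlessp}: a hypothetical sequence $\Psi_n$ with $R(\Psi_n)\to0$, vanishing spatial parts, and diverging distance to $\mathscr M_B(S^{p_1}_0,S^{q_1})$ is first rendered uniformly bounded on finite-rank supports by Lemma~\ref{local}; the induced homomorphism of ultrapowers must then, by the Raynaud--Junge--Sherman classification of homomorphisms of Haagerup $L^p$ spaces in the range $p_1\leq q_1$ (where $\Hom(S^{p_1},S^{q_1})_B=B$), be represented by multiplication by a bounded family of operators, which, traced back through the normalization, contradicts the vanishing of the spatial parts. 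Securing the uniform boundedness needed to form this ultrapower map from rank-one control alone is the delicate step.
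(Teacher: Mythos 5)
Your part (a) is essentially the paper's argument (Pisier lifting for $p=q=2$, then the transformations of Propositions~\ref{version} and \ref{prop:cova1} to move the indices, with the spatial part checked on rank-one operators), so that half is fine. The problem is part (b), precisely at the step you yourself flag as unresolved: passing from the rank-one bound $\|\Psi(x\otimes y)\|_q\leq M\|x\|\,\|y\|$ to a global bound $\|\Psi(f)\|_q\leq M'\|f\|_p$. You dismiss the ``naive iteration of quasilinearity along the Schmidt expansion'' too quickly. The paper's key observation is that when $p<\min(1,q)$ this iteration \emph{does} close: $S^q$ is an $r$-Banach space with $r=\min(1,q)>p$, the rank-one pieces $s_n x_n\otimes y_n$ have norms $(s_n)\in\ell^p\subset\ell^r$, and Kalton's summation lemma (\cite[Lemma~3.4]{k}) gives $\|\Psi(f)-\sum_n s_n\Psi(x_n\otimes y_n)\|_q\leq M_1\|f\|_p$ with $M_1$ controlled by $\sum_k(2/k)^{r/p}<\infty$; the remaining sum is then bounded by the rank-one estimate and $r$-subadditivity. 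The general case $p\geq 1$ is reduced to this one by lowering $\Psi$ to a centralizer on $S^{1/2}_0$ via Proposition~\ref{version}, which preserves the spatial part. This two-step reduction is the missing idea in your write-up.

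The alternatives you propose in its place do not work as stated. First, Section~\ref{sec:SpK} does not establish a bijection between $\Ext_B(S^{p_1},K)$ and $\Ext_\C(\Hi,\Hi)$; it only shows that the particular extension built from a twisted Hilbert space splits iff the twisted Hilbert space does, and says nothing about whether \emph{every} extension of $S^{p_1}$ by $K$ arises this way, so injectivity of $J$ is not ``immediate'' from it. Second, an ultraproduct argument ``parallel to Theorem~\ref{qlessp}'' cannot be run here: Lemma~\ref{ultra} classifies homomorphisms $S^p_{\mathscr U}\To S^q_{\mathscr U}$ only in the regime $q<p$ (right multiplication by $S^r$), and the very conclusion of Theorem~\ref{qlessp} is false for $p<q$ (nontrivial centralizers exist), so compactness alone cannot force triviality --- the vanishing of the spatial parts would have to be injected into the ultralimit in some way you do not specify, and, as you concede, you cannot even form the ultrapower map without the uniform boundedness you are trying to prove. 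As written, part (b) is therefore incomplete.
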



\begin{proof}
The proof is mostly an assembly of previous results.
\medskip

(a) Since the inclusion of $S^p$ into $S^q$ is a (contractive) homomorphism it suffices to stablish the result for $q=p$.

Let us begin with the case $p=2$. Let $\phi$ be quasilinear on $\Hi$, set $T=\Hi\oplus_\phi\Hi$ and let
$$
\begin{CD}
0@>>> S^2 @>\jmath_\circ >>\Pi_\gamma(\Hi, T) @>\varpi_\circ >> S^2 @>>> 0
\end{CD}
$$
be extension provided by Proposition~\ref{prop:Pis}.
We want to see that $\phi$ is the spatial part of any centralizer $\Phi: S^2_0\To S^2$ representing that extension. Recall that we can construct such a $\Phi$ as $B-\Lambda$, where $B$ is a bounded section of $\varpi_\circ$ and $\Lambda: S^2_0\To \Pi_\gamma(\Hi, T)$ is a  morphism or right modules such that $(\varpi_\circ)\Lambda$ is the identity on $S^2_0$. Clearly, we may take $\Lambda(f)=L\circ f$, where $L:\Hi\To T$ is given by $L(y)=(0,y)$. As for $B$, we have no explicit description of $B(f)$ in general, which would require to know an explicit Pisier's lifting. However, if $f=\xoy$ has rank-one, one can always take $B(f)=x\otimes(\phi(y),y)$ since $\|(\phi(y),y)\|_\phi=\|y\|_\Hi$ so for  
$\Phi=B-\Lambda$ we do have $\Phi(\xoy)=x\otimes\phi(y)$ and the spatial part of $\Phi$ is $\phi$.

\medskip
Case $0<p<2$. Take $s$ so that $p^{-1}=2^{-1}+s^{-1}$ and apply Proposition~\ref{version} to the centralizer $\Phi:S^2_0\To S^2$ just obtained to conclude that the map $\Phi_{(s)}:S^p_0\To S^p$ defined by
$$
\Phi_{(s)}(f)= \big{(}\Phi(v|f|^{p/2}) \big{)} |f|^{p/s} \quad\quad(v\text{ is the phase of } f),
$$
is a centralizer. We claim that $
\Phi_{(s)}$ has the same spatial part as $\Phi$. Indeed, if $x,y\in\Hi$ are normalized, then the phase of $\xoy$ is $\xoy$ itself and $|\xoy|^\alpha=x\otimes x$ for every $\alpha>0$, so
$$
\Phi_{(s)}(\xoy)=\Phi(\xoy)(x\otimes x)= (x\otimes \phi(y))(x\otimes x)= (x\otimes \phi(y)).
$$

Case $2<p<\infty$. Take $s$ so that $2^{-1}=p^{-1}+s^{-1}$ and apply Proposition~\ref{prop:cova1} to $\Phi$ get a centralizer $\Phi^{(s)}:S^{p}_0\To S^p$ satisfying
$$
\|\Phi(gf)-(\Phi^{(s)} g)f\|_2\leq M\|g\|_p\|f\|_s\quad\quad(g,f\in\frak F).
$$ 
If $\varphi$ is the spatial part of $\Phi^{(s)}$ we can clearly assume that $\Phi^{(s)}(\xoy)=x\otimes\varphi(y)$ for all $x,y\in\Hi$. If $x$ and $y$ are normalized, applying the preceding estimate with $g=\xoy, f=x\otimes x$ we have $gf=\xoy$ and
$$
\|\Phi(\xoy)-(x\otimes\varphi(y))(x\otimes x)\|_2\leq M,
$$
 hence $\|\phi(y)-\varphi(y)\|_\Hi\leq M$ for every norm one $y\in\Hi$. This shows that $\varphi$ is strongly equivalent to $\phi$ and so $J[\Psi]=[\phi]$, which completes the proof of (a).

\medskip

(b) It only remains to see that $J$ is injective for $0<p<q\leq\infty$.
 Since $J$ is linear it suffices to check that if $\Phi: S^p_0\To S^q$ is a centralizer whose spatial part is trivial, then $\Phi$ is itself trivial -- as a centralizer. Assume then (\ref{times}) and that $\phi=\ell+\beta$, with $\ell:\Hi\To\Hi$ linear and $\beta$ bounded. Replacing $\Phi$ by $\Phi-\ell_\circ$ we obtain an equivalent centralizer whose spatial part is bounded (it is $\beta$, in fact). We will show that a centralizer with bounded spatial part has to be bounded. Note that $\Phi: S^p_0\To S^q$ has bounded spatial part if and only if one has $\|\Phi(x\otimes y)\|_q\leq M\|x\otimes y\|_p$ for some constant $M$ and every $x,y\in\Hi$.

First consider the case where $p<1$. Pick $f\in S^p_0$ and choose a Schmidt expansion, say $f=\sum_{n}s_nx_n\otimes y_n$. Then since the sequence $(x_n\otimes y_n)$ is isometrically equivalent to the unit basis of $\ell^p$ and $S^q$ is a $r$-Banach space for $r=\min(1,q)$, with $r>p$, we have
\begin{equation}\label{check}
\left\|\Phi(f)-\sum_n s_n\Phi(x_n\otimes y_n)\right\|_q\leq M_1\|f\|_p
\end{equation}
for some constant $M_1$ depending only on $\Phi, p$ and $q$: indeed it follows from the inequality in \cite[Lemma~3.4]{k} that one may take
$$M_1=\left( \sum_{k=1}^\infty \left(\frac{2}{k}\right)^{r/p} \right)^{1/p}Q(\Phi).$$

Now, if $p\geq 1$ we can use Proposition~\ref{version} again to lower $\Phi$ to a centralizer defined on $S^{1/2}$, say. So, take $s$ such that $p^{-1}+s^{-1}=2$ and let  $q_1$ be given by $q_1^{-1}=p^{-1}+s^{-1}$. We know from Proposition~\ref{version} and the Proof of Part (a) that the map $\Phi_{(s)}:S^{1/2}_0\To S^{q_1}$ defined by
$$
\Phi_{(s)}(h)=\Phi(v|h|^\frac{1}{2p})|h|^\frac{1}{2s}\quad\quad(h=v|h| \text{ is the polar decomposition})
$$
is a right-centralizer with the same spatial part as $\Phi$. But $
\Phi_{(s)}$ is bounded and so is $\Phi$.
\end{proof}

Let us take a look at the constructions of Sections \ref{sec:SpK}--\ref{sec:S2Pisier} in the light of the preceding Theorem. First of all, Corollary~\ref{cor:tildephi} describes, up to strong equivalence,  all centralizers in $\mathscr C(S^p_0, S^q)_B$ when $0<p<2$ and $q>p$. We suspect that $\tilde{\phi}$ is a centralizer as long as $0<p<q$, but we have been unable to prove it. Note that this is indeed de case for $p=2, q=\infty$ in view of  Section~\ref{sec:SpK}.

The condition $p<q$ cannot be removed from Part (b). Actually all bicentralizers $S^p_0\To S^p$ have bounded spatial part; see Section~\ref{sec:bicentralizers}. Incidentally, this implies that the self-extensions of $S^p$ occurring in the Proof of Theorem~\ref{the:spatial} (a) are quite different from those previously known.

\section{Minimal extensions and $\mathcal K$-spaces}\label{sec:Kspaces}
Recall that a (complex) quasi Banach space $X$ is said to be a $\mathcal K$-space if every minimal extension (of quasi Banach spaces)
$
0\To \C\To Z\To X\To 0
$
splits. Equivalently, if for every dense subspace $X_0$ of $X$ and every quasilinear map $\varphi:X_0\To \mathbb C$ there is a linear map $\ell:X_0\To\mathbb C$ such that $\dist(\varphi,\ell)<\infty$.  The main examples of $\mathcal K$-spaces were discovered by Kalton and coworkers: it turns out that $\ell^p$ (or $L^p$) is a $\mathcal K$-space if and only if $p\in (0,\infty]$ is different from 1. See \cite{rib, k, rob, kalrob}. In contrast to the commutative situation, one has:

\begin{theorem}\label{kspace}
If\: $0<p<1$, then  $S^p$ is not a $\mathcal K$-space.
\end{theorem}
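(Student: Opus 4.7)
The plan is to construct a non-trivial quasilinear map $\varphi:S^p_0\To\mathbb{C}$ by composing a non-trivial self-centralizer on $S^p$ with the trace, exploiting the fact that for $0<p<1$ one has the contractive inclusion $S^p\subset S^1$, so $\tr$ is a bounded linear functional on $S^p$.

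I would take Kalton's logarithmic self-centralizer
\[
\Omega(f)=\sum_n s_n\log(s_n/\|f\|_p)\,x_n\otimes y_n,
\]
where $\sum_n s_n x_n\otimes y_n$ is a Schmidt expansion of $f$. By \cite{k-trace}, $\Omega$ is a bicentralizer on $S^p$ for every $0<p<\infty$, precisely because the generating Lipschitz function $s\mapsto s$ depends on a single variable (compare Section~\ref{they}). Setting $\varphi=\tr\circ\Omega:S^p_0\To\mathbb{C}$, the bound $|\tr(h)|\leq\|h\|_1\leq\|h\|_p$ for $h\in S^p$, combined with Lemma~\ref{lem:spare}, yields quasilinearity:
\[
|\varphi(f+g)-\varphi f-\varphi g|\leq \|\Omega(f+g)-\Omega f-\Omega g\|_p\leq Q(\Omega)(\|f\|_p+\|g\|_p).
\]
Geometrically, $\varphi$ represents the push-out of the non-trivial $B$-module self-extension $0\To S^p\To Z_\Omega\To S^p\To 0$ along $\tr:S^p\To\mathbb{C}$, and the triviality of $\varphi$ is equivalent to the trace admitting a continuous linear extension to $Z_\Omega$.

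The crux of the proof is to rule out such an extension. I would argue by contradiction: assume $|\varphi(f)-\ell(f)|\leq M\|f\|_p$ for some linear $\ell:S^p_0\To\mathbb{C}$. Combining this estimate with the bicentralizer property of $\Omega$ yields, for every $a\in B$ and $f\in S^p_0$, the derived estimate
\[
|\tr(a\Omega(f))-\ell(af)|\leq M'\|a\|_B\|f\|_p,
\]
which one can hope to convert, via Kalton's commutator trace identities from \cite{k-trace}, into a trivialization inconsistent with his known non-triviality results. The principal obstacle is that the weaker $\|\cdot\|_p$-quasinorm in the triviality inequality (noting $\|\cdot\|_p\geq\|\cdot\|_1$) admits a much richer class of linear correctors than the $\|\cdot\|_1$-version, so non-triviality of the analogous functional on $S^1$ does not transfer automatically by mere restriction. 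The witnesses for non-triviality must instead come from genuinely non-commutative configurations on $S^p$---test operators whose singular directions interact non-trivially with $B$---along which the trace commutator $\tr(\Omega(af)-a\Omega(f))$ diverges faster than any scalar linear correction can absorb. Extracting such sequences, and thereby exploiting the failure of local convexity specific to the regime $p<1$, is the heart of the proof.
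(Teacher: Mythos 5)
There is a genuine gap, and it sits exactly where you locate it yourself. Your overall strategy --- trace a centralizer $S^p_0\To S^1$ down to $\C$ and show the resulting quasilinear functional admits no linear corrector --- is the same as the paper's, and the quasilinearity half of your argument is fine. But the non-triviality of $\varphi=\tr\circ\Omega$ is never established; you defer it as ``the heart of the proof.'' Worse, your choice of $\Omega$ makes that heart very hard to supply. The Kalton--Peck map $\Omega(f)=\sum_n s_n\log(s_n/\|f\|_p)\,x_n\otimes y_n$ is a \emph{symmetric bicentralizer}, and, as shown in Section~\ref{sec:bicentralizers} (see (\ref{eq:Obounded})) and noted after Theorem~\ref{the:spatial}, every bicentralizer on $S^p$ has bounded spatial part: $\|\Omega(x\otimes y)\|_p\leq M\|x\|\,\|y\|$. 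Hence $\tr\circ\Omega$ is bounded on all rank-one operators, and its restriction to a diagonal copy of $\ell^p$ is trivial because $\ell^p$ \emph{is} a $\mathcal K$-space for $p<1$. So every local or commutative test fails by design, and you are left needing a divergence argument along ``genuinely non-commutative configurations'' that you do not supply; nothing in the paper settles whether $\tr\circ\Omega$ is even nontrivial.

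The paper sidesteps this by choosing a centralizer whose spatial part is already a \emph{nontrivial} quasilinear map $\phi$ on $\mathscr H$ (such $\phi$ exist by Enflo--Lindenstrauss--Pisier): Corollary~\ref{cor:tildephi} gives the one-sided right centralizer $\tilde\phi(u)=\sum_k s_k\,x_k\otimes\phi(y_k):S^p_0\To S^1$, and then $\varphi=\tr\circ\tilde\phi$ satisfies $\varphi(x\otimes y)=\langle\phi(y)|x\rangle$. If some linear $\ell:S^p_0\To\C$ were at finite distance from $\varphi$, then $\ell(x\otimes y)\To0$ as $x\To0$ for fixed $y$, so Lemma~\ref{mor}(d) forces $\ell(x\otimes y)=\langle L(y)|x\rangle$ for some linear endomorphism $L$ of $\mathscr H$, whence $\dist(\phi,L)<\infty$ and $\phi$ would be trivial --- a contradiction. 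If you want to salvage your approach, replace your bicentralizer by this $\tilde\phi$; the non-triviality is then detected already on rank-one operators.
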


\begin{proof}
Let $\phi$ be quasilinear on $\mathscr H$ and let $\tilde\phi:S^p_0\To S^1$ be the right centralizer given by Corollary~\ref{cor:tildephi}. Composing with $\tr:S^1\To\mathbb C$ we get a quasilinear function $\varphi: S_0^p\To\C$ such that
$$\varphi(x\otimes y)=\tr(\tilde\phi(x\otimes y))=\tr(x\otimes \phi(y))=\langle \phi(y)|x\rangle.$$
Suppose there is a linear $\ell: S_0^p\To\C$ at finite distance from $\varphi$. As $\varphi(x\otimes y)\To 0$ for fixed $y$ when $x\To0$ in $\mathscr H$ the same occurs to  $\ell(x\otimes y)\To 0$  and, by Lemma~\ref{mor}(d), there is a linear map $L$ on $\mathscr H$ such that
$\ell(x\otimes y)=\langle L(y)|x\rangle$. This obviously implies $\dist(\phi,L)<\infty$. Starting with a non-trivial $\phi$ we get a non-trivial, minimal extension of $S^p$.
\end{proof}

Of course $S^1$ is not a $\mathcal K$-space as it contains a complemented subspace isomorphic to $\ell^1$, while $S^p$ is a $\mathcal K$-space for $p\in(1,\infty)$, as  all $B$-convex spaces are. Whether or not the spaces $K$ and $B$ are ``themselves'' $\mathcal K$-spaces is a fascinating mistery. 

We finally add a result which partially answers a question raised by Kalton and Montgomery-Smith at the end of their survey \cite[p. 1172]{kal-mon}.

\begin{proposition}\label{answer}
Let $\Phi:S^2_0\To L(\mathscr H)$ be a left centralizer from $S^2_0$ to $S^2$. Then the function $\varphi:S^1_0\To\C$ given by
\begin{equation}\label{vphi}
\varphi(f)=\tr\left( u|f|^{1/2} \Phi( |f|^{1/2})  \right),
\end{equation}
 where $u$ is the phase of $f$, is quasilinear. Every quasilinear (complex) function on $S^1_0$ is at finite distance from one arising in this way.
\end{proposition}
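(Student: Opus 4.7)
The first assertion reduces to showing that the map $\tilde\Phi\colon S^1_0\To S^1$ defined by $\tilde\Phi(f)=u|f|^{1/2}\Phi(|f|^{1/2})$, with $u$ the phase of $f$, is a left centralizer. Since $u|f|^{1/2}$ has finite rank, the product $\tilde\Phi(f)$ is itself of finite rank and hence lies in $S^1$, even though $\Phi(|f|^{1/2})$ is only guaranteed to live in $L(\Hi)$. Once we know $\tilde\Phi$ is a left centralizer from $S^1_0$ to $S^1$, Lemma~\ref{lem:spare} gives its quasilinearity, and composition with the bounded functional $\tr\colon S^1\To\C$ yields the quasilinearity of $\varphi=\tr\circ\tilde\Phi$.

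The centralizer property of $\tilde\Phi$ follows from an adaptation of the argument in Proposition~\ref{version}(b) to the left-module setting. The key step is the left analogue of the accounting inequality (\ref{account}): for $a_1,a_2,h_1,h_2\in S^2$ with $a_1h_1=a_2h_2$,
$$\|a_1\Phi(h_1)-a_2\Phi(h_2)\|_1\leq M(\|a_1\|_2\|h_1\|_2+\|a_2\|_2\|h_2\|_2).$$
To establish this, apply Schmitt's lemma to $h_1,h_2\in S^2$ to find $h\in S^2$ with $h_i=b_ih$, $\|b_i\|_B\leq 1$ and $\|h\|_2\leq C(\|h_1\|_2+\|h_2\|_2)$; choosing $h$ of maximal right support one may assume $a_1b_1=a_2b_2$, and then $\|a_i\Phi(h_i)-a_ib_i\Phi(h)\|_1=\|a_i(\Phi(b_ih)-b_i\Phi(h))\|_1\leq L(\Phi)\|a_i\|_2\|h\|_2$ by H\"older and the left-centralizer property. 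The homogeneity scaling $a_i\mapsto\lambda_ia_i$, $h_i\mapsto \lambda_i^{-1}h_i$ of Proposition~\ref{version} converts the additive bound to the product form. Applied to the two factorizations $cf=v|cf|^{1/2}\cdot|cf|^{1/2}=(cu|f|^{1/2})\cdot|f|^{1/2}$ (with $v$ the phase of $cf$), this yields $\|\tilde\Phi(cf)-c\tilde\Phi(f)\|_1\leq M\|c\|_B\|f\|_1$.

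For the converse direction, given a quasilinear $\psi\colon S^1_0\To\C$, I would construct a suitable $\Phi$ by Hilbert--Schmidt duality. For each $h\in S^2_0$, the functional $g\mapsto \psi(gh)$ on $S^2_0$ is quasilinear with constant at most $Q(\psi)\|h\|_2$, since $\|g_ih\|_1\leq\|g_i\|_2\|h\|_2$. Because $S^2$ is a Hilbert space and therefore a $\mathcal K$-space, this functional lies at distance at most $K\cdot Q(\psi)\|h\|_2$ from a bounded linear functional on $S^2$, which by Riesz representation has the form $g\mapsto \tr(g\Phi(h))$ for a unique $\Phi(h)\in S^2$. Selecting such representatives homogeneously (by choosing one on each complex ray and extending by $\Phi(\lambda h)=\lambda\Phi(h)$) yields a homogeneous $\Phi\colon S^2_0\To S^2\subset L(\Hi)$ with
$$|\tr(g\Phi(h))-\psi(gh)|\leq M\|g\|_2\|h\|_2\quad(g,h\in S^2_0).$$
Testing $\Phi(ch)-c\Phi(h)$ against arbitrary $g\in S^2_0$,
$$|\tr(g(\Phi(ch)-c\Phi(h)))|\leq |\tr(g\Phi(ch))-\psi(gch)|+|\psi((gc)h)-\tr((gc)\Phi(h))|\leq M\|g\|_2(\|ch\|_2+\|c\|_B\|h\|_2),$$
and Riesz duality then gives $\|\Phi(ch)-c\Phi(h)\|_2\leq 2M\|c\|_B\|h\|_2$, so $\Phi$ is a left centralizer from $S^2_0$ to $S^2$.

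Finally, with this $\Phi$, for $f\in S^1_0$ with polar decomposition $f=u|f|$, taking $g=u|f|^{1/2}$ and $h=|f|^{1/2}$ gives $gh=f$ and $\|g\|_2=\|h\|_2=\|f\|_1^{1/2}$, so
$$|\varphi(f)-\psi(f)|=|\tr(g\Phi(h))-\psi(gh)|\leq M\|g\|_2\|h\|_2=M\|f\|_1,$$
exhibiting $\psi$ at finite distance from the $\varphi$ associated to this $\Phi$. The main obstacles I expect are the careful execution of Schmitt's factorization in Part 1 (in particular, arranging $a_1b_1=a_2b_2$ for $h$ of full right support) and the verification in Part 2 that a homogeneous selection of Riesz representatives can be made without disturbing the estimate; both are essentially bookkeeping but require attention to constants.
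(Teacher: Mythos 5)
Your overall strategy matches the paper's (left version of Proposition~\ref{version} for quasilinearity; the $\mathcal K$-space property of $S^2$ plus trace duality for the converse), but there are two genuine gaps, one in each half.

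In the first half, the claim that $\tilde\Phi(f)=u|f|^{1/2}\Phi(|f|^{1/2})$ ``is of finite rank and hence lies in $S^1$'' is false as stated: $\Phi$ takes values in $L(\mathscr H)$, and the composition of the bounded finite-rank operator $u|f|^{1/2}$ with a discontinuous linear endomorphism has finite-dimensional range but need not be bounded, hence need not belong to $S^1$. (Concretely, if $\Phi=\alpha$ is a morphism implemented by an unbounded $\ell\in L(\mathscr H)$, then $u|f|^{1/2}\Phi(|f|^{1/2})=\alpha(f)$ is unbounded.) Your subsequent $\|\cdot\|_1$-estimates therefore do not make sense without a preliminary reduction. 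The paper handles this by invoking Corollary~\ref{all} to write $\Phi=\Psi+\alpha+b$ with $\Psi$ valued in $S^2$, $\alpha$ a morphism and $b$ bounded; the morphism contributes the \emph{linear} term $\tr(\alpha(f))$ (using the purely algebraic trace) and the bounded part a bounded term, so only the $S^2$-valued piece needs your accounting argument. Your adaptation of the inequality (\ref{account}) to the left setting is fine for that piece, but the reduction is not optional.

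In the second half the error is more serious: the $\mathcal K$-space property of $S^2$ yields a \emph{linear} functional $\ell_g$ at finite distance from $f\mapsto\psi(fg)$, not a \emph{bounded} one, so Riesz representation cannot be applied and $\Phi(g)$ cannot be taken in $S^2$. (If the quasilinear functional were at finite distance from a bounded linear functional it would itself be bounded, which fails already for $\psi$ an unbounded linear functional on $S^1_0$.) This is precisely where the nontrivial work of the paper's proof lies: one must represent the possibly unbounded $\ell_g$ as $f\mapsto\tr(L\circ f)$ with $L\in L(\mathscr H)$ via Lemma~\ref{mor}(d), and to apply that lemma one has to verify that $\ell_g(x\otimes y)\to0$ as $x\to0$ for fixed $y$, which the paper deduces from the estimate $\phi((x\otimes y)g)\to0$ using a quantitative consequence of quasilinearity from \cite{k}. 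Your final duality computation showing that $\Phi$ is a left centralizer, and the identification $\varphi\approx\psi$ via $g=u|f|^{1/2}$, $h=|f|^{1/2}$, do go through once $\Phi$ is correctly constructed with values in $L(\mathscr H)$, since the differences $\Phi(ag)-a\Phi(g)$ are tested only against finite-rank $f$.
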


\begin{proof}
Let us see the first part assuming that $\Phi$ takes values in $S^2$. A specialization
($p_2=q_2=s=2; p_1=q_1=1$) of the obvious left version of Proposition~\ref{version} shows that the map $\Phi_{(2)}:S^1_0\To S^1$ defined by $\Phi_{(2)}(f)= u|f|^{1/2} \Phi( |f|^{1/2}) $ is a
centralizer, hence a quasilinear map. Since the trace is bounded and linear on $S^1$, the composition $\varphi(f)
=\tr(\Phi_{(2)}(f))$ is quasilinear, too.

In any case, we know from Corollary~\ref{all} that there is a centralizer $\Psi:S^2_0\To S^2$ that induces an extension equivalent to that induced by $\Phi$. Hence (see Section~\ref{they}) there exist a morphism of left modules $\alpha:S^2_0\To L(\mathscr H)$ and a bounded homogeneous map $b:S^2_0\To S^2$ such that $\Phi=\Psi+\alpha+b$. We have
$$
\varphi(f)=\tr\left( u|f|^{1/2} \Psi( |f|^{1/2}) \right) + \tr\left( u|f|^{1/2} \alpha( |f|^{1/2}) \right)+\tr\left( u|f|^{1/2} b( |f|^{1/2}) \right).
$$
We have just proved that the first summand in the right-hand side of the preceding equality is a quasilinear function of $f$. The second one is linear since
$u|f|^{1/2} \alpha( |f|^{1/2})= \alpha( u|f|^{1/2} |f|^{1/2})=\alpha(f)$. The  third one is clearly bounded. Thus $\varphi$ is itself quasilinear.

As for the second part, let $\phi: S^1_0\To\C$ be a quasilinear function. Consider the map $S_0^2\times S^2_0\To\C$ sending $(f,g)$ to $\phi(fg)$. For fixed $g\in S^2_0$, the function $f\longmapsto \phi(fg)$ is quasilinear on  $S^2_0$, with constant at most $\|g\|_2Q(\phi)$. But, being a Hilbert space, $S^2$ is a $\mathcal K$-space and so there is a linear map $\ell_g:S^2_0\To \C$ (depending on $g$) such that
\begin{equation}\label{37}
|\phi(fg)-\ell_g(f)|\leq k\|g\|_2Q[\phi]\|f\|_2
\end{equation}
where $k\leq 37$ is the ``$\mathcal K$-space constant'' of $S^2$.

Next we want to see that $\ell_g(f)=\tr(L\circ f)=\tr(f\circ L)$ for some $L\in L(\mathscr H)$ depending on $g$. According to Lemma~\ref{mor}(d) it suffices to check that for each fixed $y\in \mathscr H$ one has $\ell_g(x\otimes y)\To 0$ as $x\To 0$ in $\mathscr H$. In view of (\ref{37}), it suffices to verify that for fixed $g\in S^2_0$ and $y\in \mathscr H$ one has
\begin{equation}\label{38}
\phi((x\otimes y)g)\To 0\quad\quad(\text{as }x\To 0).
\end{equation}
Write $g=\sum_{n=1}^mt_nx_n\otimes y_n$. Then
$$
(x\otimes y)g=g^*(x)\otimes y= \sum_{n=1}^mt_n\langle x|y_n\rangle x_n\otimes y.
$$
As $\phi$ is quasilinear we have the estimate (see the part of the argument marked with (*)  in   \cite[Proof of Lemma 3.2]{k})
$$
\left|\phi((x\otimes y)g)- \sum_{n=1}^mt_n\langle x|y_n\rangle \phi(x_n\otimes y) \right|
\leq Q(\phi) \sum_{n=1}^m\big{|}nt_n\langle x|y_n\rangle\big{|} \|x_n\|\|y\|
$$
and (\ref{38}) follows.

To sum up, there is homogeneous map $\Phi:S^2_0\To L(\mathscr H)$ such that
$$
|\phi(fg)-\tr(f\Phi(g))|\leq M\|f\|_2\|g\|_2\quad\quad(f,g\in S^2_0).
$$
Clearly, $\phi\approx\varphi$, where $\varphi$ is given by (\ref{vphi}). It only remains to check that $\Phi$ is a centralizer. Take $g,f\in S^2_0, a\in B$. We have:
\begin{align*}
|\phi(f(ag))-\tr(f\Phi(ag))|&\leq M\|f\|_2\|ag\|_2,\\
|\phi((fa)g)-\tr(fa\Phi(g))|&\leq M\|fa\|_2\|g\|_2,
\end{align*}
so
$$
\|\Phi(ag)-a\Phi(g)\|_2=\sup_{\|f\|_2\leq 1}|\tr(f(\Phi(ag)-a\Phi(g))|\leq M\|a\|_B\|g\|_2
$$
and we are done.
\end{proof}

\section{Bicentralizers}\label{sec:bicentralizers}
 A bicentralizer is just a left centralizer which is also a right centralizer. 
Bicentralizers on the Schatten classes are the subject of \cite{k-trace} and \cite{kal-diff}.
It can be proved that every extension of quasi Banach $B$-bimodules $0\To S^q\To Z\To S^p\To 0$ arises from a bicentralizer $\Omega: S^p_0\To S^q$ although we will refrain from entering into the details here.
 Let us draw some consequences of the results proved so far.

\begin{theorem}
Let $\Omega:S^p_0\To S^q$ be a bicentralizer, with $p\neq q$. Then there exist $c\in\C$ and $M\geq 0$ such that $\|\Omega(f)-cf\|_q\leq M\|f\|_p$ for all $f\in S_0^p$. In particular, if $q>p$, then $\Omega$ is bounded.
\end{theorem}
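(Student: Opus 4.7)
The plan is to extract the scalar $c$ via the spatial part of $\Omega$ and then treat the two cases $q>p$ and $q<p$ separately, using tools already proved. Since $\Omega$ is in particular a right centralizer, Lemma~\ref{lem:spatial} gives its spatial part $\phi:\Hi\to\Hi$, satisfying $\|\Omega(x\otimes y)-x\otimes\phi(y)\|_q\leq M\|x\|\|y\|$. The simultaneous left-centralizer property of $\Omega$ upgrades $\phi$ to a centralizer on $\Hi$ over $B$: $\|\phi(ay)-a\phi(y)\|_{\Hi}\leq M'\|a\|_B\|y\|$. Applying this with $a=y\otimes\eta$ for a fixed unit vector $\eta\in\Hi$ (so that $a\eta=y$ and $a\phi(\eta)=\langle\phi(\eta)|\eta\rangle y$) yields $\|\phi(y)-cy\|_{\Hi}\leq M''\|y\|$ for $c=\langle\phi(\eta)|\eta\rangle$; this $c$ is the scalar appearing in the conclusion.

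For the case $q>p$, I would invoke Theorem~\ref{the:spatial}(b), which says that the spatial part map $J$ is injective, and observe that the spatial part of the inclusion homomorphism $f\mapsto cf$ is exactly $c\,{\bf I}_{\Hi}$, so $\Omega$ and $c\iota$ represent the same class; hence $\Omega=c\iota+\alpha+\beta$ with $\alpha$ a right-module morphism and $\beta$ bounded. By Lemma~\ref{mor}(b), $\alpha(f)=\ell\circ f$ for some $\ell\in L(\Hi)$; plugging this into the left-centralizer estimate for $\Omega$ and absorbing the $\beta$-terms gives $\|[\ell,a]\circ f\|_q\leq C\|a\|_B\|f\|_p$. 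Testing with $f=\eta\otimes\eta$ and $a=y\otimes\eta$, the same scalar-extraction as in the first paragraph forces $\ell=c'\,{\bf I}_{\Hi}+b$ with $b\in B$. Since $q\geq p$ the inclusion $S^p\hookrightarrow S^q$ is contractive, so $\|bf\|_q\leq\|b\|_B\|f\|_p$ can be absorbed into the error, yielding $\|\Omega(f)-(c+c')f\|_q\leq M\|f\|_p$; boundedness of $\Omega$ then follows from $\|(c+c')f\|_q\leq|c+c'|\|f\|_p$.

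The case $q<p$ uses Theorem~\ref{qlessp}: applying it to $\Omega$ viewed as a left centralizer produces a decomposition $\Omega=\phi+\beta$ with $\phi$ a morphism of left modules and $\beta$ bounded. By Lemma~\ref{mor}(c), $\phi(f)=(\ell\circ f^*)^*$ for a unique $\ell\in L(\Hi)$. The right-centralizer property of $\Omega$ transfers to $\phi$ up to bounded error, and after the identity $\phi(fa)-\phi(f)a=([\ell,a^*]\circ f^*)^*$ this becomes $\|[\ell,b]\circ g\|_q\leq C\|b\|_B\|g\|_p$ for all $b\in B$ and $g\in\frak F$. In this range H\"older's inequality recognises the supremum of the left-hand side over $\|g\|_p\leq 1$ as the Schatten norm $\|[\ell,b]\|_r$, with $r^{-1}=q^{-1}-p^{-1}$, so $\mathrm{ad}_\ell:B\to S^r$ is a bounded derivation.

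The main obstacle is then to show that $\ell=c'\,{\bf I}_{\Hi}+d_0$ with $d_0\in S^r$, equivalently, that every bounded derivation $B(\Hi)\to S^r$ is inner and implemented by an element of $S^r$. For $r\geq 1$ this is the classical Kadison--Sakai theorem on derivations of a von Neumann algebra into a dual bimodule. For $0<r<1$ the same conclusion can be obtained directly by testing the derivation on finite rank projections and iterating the commutator bound to control the off-diagonal blocks of $\ell$ in the appropriate Schatten norm. Once $\ell$ has the claimed form, $\phi(f)=\overline{c'}f+fd_0^*$ and H\"older's inequality $\|fd_0^*\|_q\leq\|f\|_p\|d_0\|_r$ absorbs the $fd_0^*$ term into the error, so $\|\Omega(f)-\overline{c'}f\|_q\leq M\|f\|_p$, as required.
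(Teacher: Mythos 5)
Your overall strategy parallels the paper's: in both cases one reduces to showing that the linear map $\ell$ implementing the morphism part of $\Omega$ differs from a scalar multiple of the identity by something controllable. The case $q>p$ is essentially correct (the paper argues slightly more directly that the spatial part is bounded, but your route through the injectivity of $J$ and Lemma~\ref{mor}(b) reaches the same conclusion), and for $q<p$ your reformulation --- the bounded derivation $a\mapsto[\ell,a]$ from $B$ into $S^r$, with $r^{-1}=q^{-1}-p^{-1}$, must be a scalar perturbation of an inner derivation implemented by an element of $S^r$ --- is exactly the right reduction.

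The gap is in the final step when $r<1$. For $r\geq1$ the conclusion does hold, but not by Kadison--Sakai (which concerns derivations of a von Neumann algebra into itself); what is needed, and what the paper actually supplies, is an averaging argument: one integrates $u^*\ell u$ against an invariant mean on the amenable group $U$ of unitaries that are compact perturbations of scalars, using weak$^*$ lower semicontinuity together with \emph{convexity} of the balls of the target to see that the average differs from $\ell$ by an element of norm at most $M$ and commutes with $U$, hence is a scalar. Precisely because convexity is used, the argument collapses for $r<1$, and your proposed substitute --- ``testing the derivation on finite rank projections and iterating the commutator bound to control the off-diagonal blocks'' --- is not a proof: the corner estimates $\|(\mathbf{I}-e)\ell e\|_r\le M$ over all finite-rank projections $e$ do not obviously assemble into a global bound $\|\ell-c\,\mathbf{I}\|_r\le M'$ when $r<1$, since the $r$-triangle inequality degenerates under the infinite block decomposition this would require. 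The paper explicitly acknowledges that the non--locally convex case cannot be handled directly and instead invokes Proposition~\ref{prop:cova1} to transport the bicentralizer $f\mapsto\ell f$ to a bicentralizer $S^{p_2}_0\to S^1$, settles the matter there (where the averaging works), and multiplies back by $|f|^{p_1/s}$. You would need either to carry out this lifting or to supply a genuine direct argument for $0<r<1$; as written, the hardest sub-case is asserted rather than proved.
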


\begin{proof}
Let us first observe that
\begin{equation}\label{eq:Obounded}
\|\Omega(\xoy)\|_q\leq M\|x\|\|y\|\quad\quad(x,y\in\Hi).
\end{equation}
To see this, we fix normalized $x_0,y_0\in\Hi$ and we set $\xi=\Omega(x_0\otimes y_0).$
But, if $x, y$ are normalized in $\Hi$, then there exist isometries $u,v\in B$ such that $y=v(y_0)$ and $x_0=u(x)$, hence $\xoy=v(x_0\otimes y_0)u$,
$$
\|v\xi u-\Omega(\xoy)\|_q\leq C(\Omega),
$$
and (\ref{eq:Obounded}) follows for some $M$ depending only on the modulus of concavity of $S^q$ and the numbers $\|\xi\|_q$ and $C(\Omega)$.
\medskip

Let us dispose of the case where $q>p$. As $\Omega$ is a right centralizer, we know from Lemma~\ref{lem:spatial} that there is a quasilinear map $\phi$ on $\mathscr H$ such that $\|\Omega(x\otimes y)-x\otimes\phi(y)\|_q\leq M\|x\|\|y\|$ for some $M$ independent on $x,y\in \mathscr H$. But $\Omega$ is also a left centralizer and so $\|\Omega(a(x\otimes y))-a\Omega(x\otimes y)\|_q\leq M\|x\|\|y\|$, which yields
$$
\|x\otimes\phi(ay)-x\otimes a\phi(y)\|_q=\|x\|\|\phi(ay)-a\phi(y)\|\leq M\|a\|_B\|x\|\|y\|\quad\quad(a\in B, x,y\in\mathscr H).
$$
As $\{ay: \|a\|_B\leq 1\}$ is the ball of radius $\|y\|$ in $\mathscr H$ we see that $\phi$ is bounded and so is $\Omega$; this was stablished during the proof of Theorem~\ref{the:spatial}(b).
\medskip

Case $q<p$. We know from Theorem~\ref{qlessp} that $\Omega$ is trivial as a right centralizer, so there is $L\in L(\Hi)$ such that 
$$
\|\Omega f- Lf\|_q\leq M\|f\|_p\quad\quad(f\in\mathfrak{F}).
$$
The map $f\longmapsto Lf$, being strongly equivalent to $\Omega$, is also a bicentralizer, so 
\begin{equation}\label{eq:irritating}
\|Laf- aLf\|_q\leq M\|a\|_B\|f\|_p\quad\quad(a\in B, f\in\mathfrak{F}).
\end{equation}
We want to see that there is $c\in\C$ such that $L$ is, as a map from $S^p_0$ to $S^q$, strongly equivalent to $f\longmapsto cf$. Note that $L(\xoy)=x\otimes L(y)$, so 
(\ref{eq:Obounded}) implies that $L$ is a bounded operator on $\Hi$.
 It is a bit irritating that we cannot handle the case $q<1$ directly. So, let us first assume $1\leq q<p\leq\infty$, so that $S^q$ is locally convex.
 
After dividing $L$ by $M$ we infer from (\ref{eq:irritating}) that if $u\in B$ is unitary, then
$$
\|u^*Luf- Lf\|_q\leq \|f\|_p\quad\quad(f\in \mathfrak F).
$$
Let $U$ be the group of unitaries in $B$ that are compact perturbations of the identity, that is, $u\in U$ if and only if there is a $c\in\C$, with $|c|=1$, and $v\in K$ such that $u=c{\bf I}_\Hi+v$. This group is known to be amenable, so let $du$ be an invariant mean on $\ell^\infty(U,\C)$. Now let us treat $S^q$ as the dual of $S^r$, where $q^{-1}+r^{-1}=1$ and we ``average'' $L$ over $U$ taking the ``weak*-integral''
$$
\Lambda(f)=\int_U u^*Lufdu,\quad\quad\text{that is,}\quad\quad
\langle\Lambda(f),g\rangle= \int_U \langle u^*Luf, g\rangle du
$$
where $g\in S^r$ and the duality is given by the trace. Then
\begin{itemize}
\item The map $f\longmapsto\Lambda(f)$ is a morphism of right modules from $\mathfrak F$ to $S^q$ and so $\Lambda(f)=Vf$, where $V\in L(\Hi)$.
\item One has $u^*\Lambda(uf)=\Lambda f$, hence $Vuf=uVf$ for every $u\in U, f\in\mathfrak{F}$ and so $V=c{\bf I}_\Hi$.
\item For every $f\in\mathfrak F$, one has $\|\Lambda(f)-Lf\|_q\leq   \|f\|_p$.%
\end{itemize}
This completes the proof when $q\geq 1$.
\medskip

We finally consider the case $0<q<p<\infty, q<1$. We have to show that if $L\in B$ satisfies (\ref{eq:irritating}), then $f\in S^p_0\longmapsto Lf\in S^q$ is strongly equivalent to a multiple of the ``identity''. 
Treating the composition $f\longmapsto Lf$ as a bicentralizer  
we want to use Proposition~\ref{prop:cova1} to ``lift'' it to the locally convex zone. Let us adjust $s$ so that  $\Hom(S^s,S^q)_B=S^1$, that is, $q^{-1}=1+s^{-1}$ and we define $p_2$ by letting $p^{-1}=p^{-1}_2+s^{-1}$. 

Now we apply Proposition~\ref{prop:cova1} to obtain a right centralizer $\Gamma: S^{p_2}_0\To S^1$ such that
$$
\|Lgf-(\Gamma g)f\|_q\leq M\|g\|_{p_2}\|f\|_s\quad\quad(f,g\in\mathfrak{F}).
$$
In view of (a) it is clear that we may take $\Gamma g=Lg$ and so (b) guarantees that $g\longmapsto Lg$ is a bicentralizer from $S^{p_2}$ to $S^1$. It follows that there is a constant $c\in\C$ such that $\|Lg-cg\|_1\leq M\|g\|_{p_2}$ and so 
$$
\|Lgf-cgf\|_q\leq M\|g\|_{p_2}\|f\|_s\quad\quad(f,g\in\mathfrak{F}),
$$
which is enough.
\end{proof}

As for ``self-bicentralizers'' on $S^p$, we have the following extension of a result by Kalton. Here, $\ell^p_0$ stands for the finitely supported sequences of $\ell^p$.

\begin{theorem}\label{th:bic2}
Let $\phi:\ell^p_0\To\ell^p$ be a symmetric centralizer over $\ell^\infty$, with $p\in(0,\infty)$. Define a self map on $S^p_0$ as follows. Given $f\in S^p_0$ choose a Schmidt expansion $f=\sum_ns_nx_n\otimes y_n$. Let $(t_n)=\phi((s_n))$ and put $\Phi f=\sum_nt_nx_n\otimes y_n$. Then $\Phi: S^p_0\To S^p$ is a bicentralizer. Moreover, every bicentralizer on $S^p$ is strongly equivalent to one obtained in this way.
\end{theorem}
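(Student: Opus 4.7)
\medskip

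The approach splits according to whether $p>1$ or $p\leq 1$. For $1<p<\infty$ both the existence and uniqueness halves of the statement are covered by Kalton's \cite[Theorem 8.3]{kal-diff}, so the new content concerns the range $0<p\leq 1$, and the plan is to reduce this range to Kalton's result by means of the transfer constructions of Propositions~\ref{version} and \ref{prop:cova1}. Before doing so, I would check that $\Phi$ is genuinely well defined regardless of the Schmidt expansion chosen: within each eigenspace of $|f|$ the sequence $(x_n)$ is determined only up to unitary rotation, and by symmetry of $\phi$ the sequence $(t_n)$ is constant on the indices sharing a given singular value, so the resulting operator $\sum_n t_n x_n\otimes y_n$ is invariant under these rotations. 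The same calculation shows that strongly equivalent symmetric centralizers on $\ell^p$ yield strongly equivalent $\Phi$'s, so we may and will replace $\phi$ by any convenient strong representative.

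For the existence half in the range $0<p\leq 1$, I would invoke Kalton's classification of symmetric $\ell^\infty$-centralizers \cite[Theorem~3.1]{kalcom} to assume that $\phi$ has the explicit form (\ref{eq:phi}) for some Lipschitz $\varphi:\R^2_+\to\C$. Fix $p_0=2$ and consider the symmetric centralizer $\phi_0$ on $\ell^{p_0}$ attached to the same $\varphi$ through (\ref{eq:phi}); by Kalton \cite[Theorem~8.3]{kal-diff} the corresponding Schmidt-diagonal map $\Phi_0:S^{p_0}_0\to S^{p_0}$ is a bicentralizer. Now pick $s>0$ with $p^{-1}=p_0^{-1}+s^{-1}$ and form $(\Phi_0)_{(s)}$ as in Proposition~\ref{version}: this is automatically a bicentralizer from $S^p_0$ to $S^p$. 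A direct computation with the polar decomposition of a Schmidt expansion $f=\sum_n s_n x_n\otimes y_n$, namely $v=\sum_n x_n\otimes y_n$ and $|f|^{\alpha}=\sum_n s_n^{\alpha}x_n\otimes x_n$, gives
\begin{equation*}
(\Phi_0)_{(s)}(f)=\sum_n \bigl[\phi_0\bigl((s_k^{p/p_0})\bigr)\bigr]_n\, s_n^{p/s}\, x_n\otimes y_n,
\end{equation*}
and plugging (\ref{eq:phi}) for $\phi_0$ and using $|(s_k^{p/p_0})|_{p_0}=\|f\|_p^{p/p_0}$ together with $p/p_0+p/s=1$ rearranges this to the Schmidt-diagonal form attached to a Kalton centralizer on $\ell^p$ built from the rescaled Lipschitz function $\tilde\varphi(a,b)=\varphi((p/p_0)a,b)$. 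Hence $(\Phi_0)_{(s)}$ is strongly equivalent to $\Phi$, completing the existence half.

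For the converse in the range $0<p\leq 1$, let $\Psi:S^p_0\to S^p$ be any bicentralizer. Choose $s$ with $s>p$ large enough that $p_2:=(p^{-1}-s^{-1})^{-1}$ satisfies $p_2>1$, and apply Proposition~\ref{prop:cova1} (in the self-indexed case $p_1=q_1=p$, $p_2=q_2$) to obtain a bicentralizer $\Psi^{(s)}:S^{p_2}_0\to S^{p_2}$. Kalton's \cite[Theorem~8.3]{kal-diff} then yields a symmetric $\ell^\infty$-centralizer $\phi_0$ on $\ell^{p_2}$ whose Schmidt-diagonal map $\Phi_0$ is strongly equivalent to $\Psi^{(s)}$. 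Applying the lowering computation of the previous paragraph to $\phi_0$ produces a symmetric $\phi$ on $\ell^p$ whose Schmidt-diagonal map is strongly equivalent to $(\Phi_0)_{(s)}$, hence to $(\Psi^{(s)})_{(s)}$. Finally, by inspecting the formulae in Propositions~\ref{prop:cova1} and \ref{version} one checks that $(\Psi^{(s)})_{(s)}$ is strongly equivalent to $\Psi$ itself: both transformations are supported on the same Hölder relation $h=v|h|^{p/p_2}\cdot|h|^{p/s}$, and their composition returns a bicentralizer at strong-equivalence distance $O(\|h\|_p)$ from $\Psi$.

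The main obstacle is the middle display in the second paragraph, i.e.\ the verification that the lowering $(\Phi_0)_{(s)}$ of a Schmidt-diagonal bicentralizer attached to $(p_0,\varphi)$ is strongly equivalent to the Schmidt-diagonal bicentralizer attached to $(p,\tilde\varphi)$ with $\tilde\varphi(a,b)=\varphi((p/p_0)a,b)$, together with the analogous matching for $(\Psi^{(s)})_{(s)}\sim\Psi$ used in the converse. Both are elementary once the polar decomposition of a finite-rank operator is written out in a Schmidt expansion, but one must keep careful track of the $p_0/p$ scaling occurring inside the first argument of $\varphi$, and of the fact that Kalton's formula is only defined up to strong equivalence when $\varphi$ is altered by a bounded function.
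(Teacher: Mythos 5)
Your approach reduces the $p\le 1$ case to Kalton's $p>1$ result by factoring through $S^2$, which is the correct skeleton and is also what the paper does. But you route the reduction through Kalton's explicit Lipschitz classification (\cite[Theorem~3.1]{kalcom}), whereas the paper's proof keeps things abstract: it uses the commutative version of Proposition~\ref{version} --- namely, that every $\ell^\infty$-centralizer on $\ell^p_0$ is strongly equivalent to $\psi_{(s)}$ for some centralizer $\psi$ on $\ell^2_0$ with $p^{-1}=2^{-1}+s^{-1}$, and that this lowering preserves symmetry --- without needing to write anything in the Lipschitz form (\ref{eq:phi}). Your route leans on the additional fact that every symmetric $\ell^\infty$-centralizer on $\ell^p$ is strongly equivalent to one of the explicit form (\ref{eq:phi}); the paper quotes (\ref{eq:phi}) only as a ``specialization'' of Theorem~3.1, so if you mean to invoke that completeness you should point to where it is actually established, rather than to the statement producing centralizers from Lipschitz functions.

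More importantly, there is a scaling mismatch in your existence step that, as written, proves the wrong thing. You build $\phi_0$ on $\ell^2$ from the same $\varphi$ that generates $\phi$ on $\ell^p$, and then your own computation shows that the spatial formula underlying $(\Phi_0)_{(s)}$ is the Kalton centralizer associated with $\tilde\varphi(a,b)=\varphi\bigl((p/2)a,b\bigr)$, not with $\varphi$. Since $\varphi$ and $\tilde\varphi$ differ by a rescaling of the first argument --- typically an unbounded perturbation --- the map $(\Phi_0)_{(s)}$ is \emph{not} strongly equivalent to $\Phi$, so the final sentence of that paragraph does not follow. The fix is to start from the centralizer $\phi_0$ on $\ell^2$ built from the rescaled function $\varphi_0(a,b)=\varphi\bigl((2/p)a,b\bigr)$, so that the lowering lands back on $\varphi$; then the identification $(\Phi_0)_{(s)}\approx\Phi$ is correct. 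You should also be explicit that the lowering $\psi\mapsto\psi_{(s)}$ preserves symmetry (the paper singles this out as a separate fact, since it is needed to stay inside the class you are classifying), and the claim that $(\Psi^{(s)})_{(s)}\approx\Psi$ in the converse direction deserves at least a line of justification --- both you and the paper's sketch treat that inversion as routine, but it is a step the reader will want to see.
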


\begin{proof}[Sketch of the proof]
Symmetric means that there is a constant $M$ such that $|\phi(f\circ\sigma)-\phi(f)\circ\sigma|_p\leq M|f|_p$ for every $f\in\ell^p_0$ whenever $\sigma$ is a bijection of $\N$.

The proof required the following three facts:
\begin{enumerate}
\item The statement holds for $p>1$ as proved by Kalton in \cite[Theorem 8.3]{kal-diff}.


\item The commutative versions of Proposition~\ref{prop:cova1} and \ref{version} hold: let $p,q,s\in (0,\infty)$ satisfy $p^{-1}=  q^{-1}+ s^{-1}$ and let $\psi:\ell^q_0\To\ell^q$ be a centralizer over $\ell^\infty$. Define $\psi_{(s)}:\ell^p_0\To \ell^p$ taking $\psi_{(s)}(f)=\omega(u|f|^{p/q})|f|^{p/s}$, where $u$ is the signum of $f$. Then $\psi_{(s)}$ is a centralizer and every $\ell^\infty$-centralizer on $\ell^p_0$ is strongly equivalent to one obtained in this way.

\item Referring to the preceding statement, $\psi_{(s)}$ is symmetric if and only if $\psi$ is.
\end{enumerate}
Now, let $\phi: \ell^p_0\To\ell^p$ be a symmetric $\ell^\infty$-centralizer, where $p\leq 1$. By (2) and (3), there is a symmetric centralizer $\psi$ on $\ell^2_0$ such that $\phi\approx \psi_{(s)}$, where $p^{-1}=2^{-1}+s^{-1}$ and we may assume $\phi=\psi_{(s)}$. Applying (1) to this $\psi$ we can ``extend'' it to a bicentralizer $\Psi:S^2_0\To S^2$ just taking
$$
\Psi(f)=\sum_nt_n x_n\otimes y_n,
$$
where $\sum_ns_nx_n\otimes y_n$ is the prescribed Schmidt expansion of $f$ and $\psi((s_n))=(t_n)$.
Finally, applying Proposition~\ref{version} to $\Psi$ with the same $s$ as before one obtains a bicentralizer $\Psi_{(s)}:S^p_0\To S^p$. This map is strongly equivalent to $\Phi$, from where it follows that $\Phi$ is a bicentralizer.

The ``moreover'' part follows from the case $p=2$, using again Proposition~\ref{prop:cova1} and \ref{version}.
\end{proof}

\section{Concluding remarks}

$\bigstar$ Most results in Sections~3 and 4 would generalize to noncommutative $L^p$ spaces associated to arbitrary von Neumann algebras as long as one could find a good substitute for Lemma~\ref{local}. More precisely, we ask if for every $\mathcal M$-centralizer $\Omega: L^p_0\To L^q$ with $0<q<p<\infty$ there is a system of trivial centralizers $\Omega_i$ such that $\dist(\Omega, \mathscr M_\mathcal M(L^p_0,L^q))=\sup_i \dist(\Omega_i, \mathscr M_\mathcal M(L^p_0,L^q))$. Here, $L^p_0=\{af^{1/p}:a\in \mathcal M\}$, where $f$ is a normal, faithful state on $\mathcal M$.

$\bigstar$ Concerning Theorem~\ref{kspace}, nobody knows if $K$ and $B$ are $\mathcal K$-spaces or not.
Kalton repeatedly conjectured an affirmative answer \cite[Problem 4.2]{k-handbook}, \cite[p. 11]{k-mah}, \cite[p. 815]{kalrob}.

There is a rather curious connection with Theorem~\ref{kspace}: if $K$ (or $B$) is a $\mathcal K$-space, then every quasilinear function $\varphi:S^p_0\To\mathbb C$ arises, up to a bounded perturbation, as $\varphi(f)=\sum_n s_n\langle \phi(y_n)|x_n\rangle$, where $\phi$ is a quasilinear map on $\mathscr H$.

Also, it seems to be interesting to determine if $L^p(\mathcal M)$ is a $\mathcal K$-space for $0<p<1$ if $\mathcal M$ is a von Neumann algebra with no minimal projection.

$\bigstar$ Proposition~\ref{answer} and the results of Section~\ref{sec:isom} imply that if $\phi:\ell^p_0\To\ell^p$ is a (not necessarily symmetric) centralizer over $\ell^\infty$ and $(e_n)$ is a fixed orthonormal basis in $\mathscr H$, then there is a left (or right, but not two-sided) centralizer $\Phi$ on $S^p_0$ such that $\Phi(\sum_ns_ne_n\otimes e_n)= \sum_nt_ne_n\otimes e_n$, where $(t_n)=\phi((s_n))$.

\section*{Acknowledgements}
I thank the anonymous referee of a previous version of this paper, who pointed out a serious error in Section 5, and Jes\'us Su\'arez de la Fuente, who suggested how to fix it.

\end{document}